\DeclareMathOperator*{\argmin}{arg\,min}
\definecolor{shadecolor}{gray}{.65}
\theoremstyle{definition}
\newtheorem{defi}{Definition}[section]
\newtheorem{rem}[defi]{Remark}
\newtheorem{algo}[defi]{Algorithm}
\newtheorem{thm}[defi]{Theorem}
\newtheorem{lemma}[defi]{Lemma}
\numberwithin{equation}{section}
\begin{document}

\begin{center}
	\Large{\hspace*{0.5cm}Simultaneous Factors Selection and Fusion of Their Levels \\
	in Penalized Logistic Regression\hspace*{0.5cm}} 
	\end{center}
		\vspace*{0.5cm}
	\begin{center}
	%\textit{
	Lea Kaufmann$^*$ and Maria Kateri$^{**}$\\
\small{Institute of Statistics, RWTH Aachen University, Germany \\
$^*$ kaufmann@isw.rwth-aachen.de\\
$^{**}$ maria.kateri@rwth-aachen.de}
%}
\end{center}
	\vspace*{1cm}
\begin{abstract}
	Nowadays, several data analysis problems require for complexity reduction, mainly meaning that they target at removing  the non-influential covariates from the model and at delivering a sparse model. When categorical covariates are present, with their levels being dummy coded, the number of parameters included in the model grows rapidly, fact that emphasizes the need for reducing the number of parameters to be estimated. 
In this case, beyond variable selection, sparsity is also achieved through fusion of levels of covariates which do not differentiate significantly in terms of their influence on the response variable. 
In this work a new regularization technique is introduced, called $L_{0}$-Fused Group Lasso ($L_{0}$-FGL) for binary logistic regression. 
It uses a group lasso penalty for factor selection and for the fusion part it applies an $L_{0}$ penalty on the differences among the levels' parameters of a categorical predictor. Using adaptive weights, the adaptive version of $L_{0}$-FGL method is derived. Theoretical properties, such as the existence, $\sqrt{n}$ consistency and oracle properties under certain conditions, are established. In addition, it is shown that even in the diverging case where the number of parameters $p_{n}$ grows with the sample size $n$, $\sqrt{n}$ consistency and a consistency in variable selection result are achieved. Two computational methods, PIRLS and a block coordinate descent (BCD) approach using quasi Newton, are developed and implemented. A simulation study supports that $L_{0}$-FGL shows an outstanding performance, especially in the high dimensional case. 
\end{abstract}

\textbf{Key words:} High-dimensional statistics, lasso; group lasso, $L_0$ norm, $L_1$ norm, \\
$\sqrt{n}$ consistency, PIRLS algorithm, block coordinate descent (BCD) method

\pagenumbering{gobble}

\pagenumbering{gobble}

\pagenumbering{arabic}
\section{Introduction}

Regularization methods for generalized linear models (GLMs) have been in the center of interest for high-dimensional data analysis, especially  in the last two decades. In this framework, categorical covariates deserve a special attention. First of all, the dummy coding for their levels increases the model complexity rapidly since a categorical predictor with $p+1$ levels brings $p$ predictors into the model. Furthermore, categorical covariates allow dimension reduction not only by eliminating non-significant predictors but also by fusing levels of a predictor that have the same influence on the response. Such fusions lead to sparser models strengthening simultaneously their interpretability and may propose scale adjustments for the categorical predictors. 
Procedures that allow factor selection and levels fusion at the same time are a powerful tool for meaningfully reducing the complexity of the model. 

The most popular model selection and shrinkage estimation method for GLMs is the lasso that uses a $L_1$-type penalty and which was initially proposed for linear regression models (\cite{Tibshirani1996}). It is well-known that the lasso estimator is biased and its model selection can be inconsistent.
An attractive alternative that enjoys selection consistency is the adaptive lasso, proposed by \cite{Zou2006}, which allows
different shrinkage levels for different regression coefficients through the use of adaptive weights. 
Other methods leading to nearly unbiased estimators are the smoothly clipped absolute deviation (SCAD) penalty \cite{FanLi2001} and the minimax concave penalty (MCP) \cite{Zhang2006}.

In a variable selection problem with categorical covariates (factors), the method needs to be applicable factor-wise, i.e., to exclude or include in the model all levels of a factor. For this, a natural and suitable extension of the lasso is the group lasso, originally considered for linear regression  (\cite{KimEtAl2006}, \cite{YuanLin2006}) and later adjusted for logistic regression (\cite{MeierEtAl2008}). A review on group lasso  is provided by \cite{HuangEtAl2012}. The adaptive lasso has also been extended to the adaptive group lasso \cite{Wang2008} while group SCAD \cite{WangEtAl2007} and group MCP \cite{HuangEtAl2012} are the groupwise selection variants of the SCAD and MCP.

However, the above mentioned methods are not able to perform fusion among the levels of a categorical predictor.
Such a fusion can be achieved in a penalized regression framework by applying the penalty on the differences of 
the parameters belonging to the same factor. For the $L_{1}$ penalty this was first considered by \cite{BondellReich2009} in an ANOVA framework and by \cite{GertheissTutz2010_2} for linear regression models. Since $L_{1}$-type penalties lead to biased estimates, penalties with adaptive weights could be considered. However, the performance of such adaptive methods depends on the quality of the adaptive weights used. This fact led \cite{OelkerEtAl2014} to consider the so called $L_{0}$ norm as penalty function on the differences instead. The advantage of this $L_{0}$ based approach is that a $L_{0}$-type penalty just differentiates between an entry (hence a difference) being zero or nonzero and consequently does not depend on the absolute value of the coefficients' differences. A disadvantage of this approach is that the resulting optimization problem is 
non-convex and thus computationally more involved. 
Further, since the $L_0$ norm is not even continuous, it is difficult to investigate theoretical properties. In \cite{OelkerEtAl2014}, the model was fitted with the penalized iteratively reweighted least squares (PIRLS) algorithm while theoretical properties were not in the focus of the paper. This method performs indirectly factor selection, since a factor is excluded  when all parameters corresponding to it are set equal to zero, i.e. to the value of the reference category.
Categories fusion based on penalties imposed on the differences among the parameters of a factor, inducing also factor selection, has been 
considered by \cite{StokellEtAl2020} as well. They intoduced the so called SCOPE methodology, which uses a non-convex penalty, the MCP.

Nevertheless, it is not clear whether such indirect factor selection procedures based on the differences of coefficients from the reference category perform well enough, comparable to a group variable selection approach. 
As the group lasso penalty is a natural choice for factor selection while the above described $L_{0}$ based approach is a convenient choice for levels fusion, this work introduces a new regularization technique, called $L_0$-fused group lasso ($L_0$-FGL), that combines these two penalties for capturing the two different sources of sparsity, namely variable selection and fusion of levels for categorical predictors. 
The use of two penalty functions allows to set the focus on either factor selection or levels fusion, depending on the application context.
Here, $L_0$-FGL is developed and studied in the framework of penalized logistic regression with all covariates being categorical. The method is adjustable to other types of GLMs and cases of co-existence of continuous and categorical covariates. 
We will verify that in our setting the additional group lasso penalty consideration improves the selection performance compared to the approach based solely on the $L_0$ penalties on the differences, which justifies the consideration of an additional penalty term to enforce a stronger factor selection performance.

The rest of the paper is organized as follows. After introducing the new $L_{0}$-FGL method along with its adaptive variant and pointing out its main characteristics in Section 2, the theoretical properties of $L_{0}$-FGL and adaptive $L_{0}$-FGL are investigated in Section 3. In particular, the existence, $\sqrt{n}$-consistency, and existence of an estimate satisfying the asymptotic normality property are proved. Also a result about consistency in variable selection is provided. 
All properties in Section 3 are considered (i) for fixed number of parameters, and (ii) for number of parameters growing in the sample size.
The algorithms used for obtaining the $L_0$-FGL estimates are discussed in Section 4, where also coefficient paths for different computational methods are analyzed. The computational approaches of Section 4 are compared in Section 5 in terms of simulation studies and appropriate goodness of fit measures. A high dimensional design is also included in the simulation studies, which underlines the outstanding performance of the new proposed approach.

\section{The $L_{0}$-fused group lasso for logistic regression}

Consider a binary response $Y$ and $J \in \mathbb{N}$ candidate categorical covariates denoted by $X_{1},...,X_{J}$, observed on a sample of size $n$. In general, some of them could also be continuous but since our goal is to perform variable fusion within the levels of each categorical covariate, we will focus on categorical covariates and neglect the co-existence of continuous covariates. The expansion of the setup and the results for this case is straightforward. Covariate $j \in \{1,...,J\}$ has $p_{j}+1$ levels, coded by $0,\,...,\,p_{j}$, where 0 is chosen to be the reference category. Consequently, our resulting parameter vector is $\bm \beta = (\beta_{0},\bm \beta_{1},...,\bm \beta_{J})^T \in \mathbb{R}^{p+1}$, 
where $p := \sum_{j=1}^J p_{j}$, $\beta_{0}$ denotes the intercept and $\bm \beta_{j}=(\beta_{j1}, \ldots, \beta_{jp_j})$, $j \in \{1,...,J\}$, is the parameter subvector corresponding to the $j$-th factor.

The fixed design matrix is  
$\bm X=(\bm 1, \bm X_{1}^T,...,\bm X_{J}^T)$, with $\bm X_{j}=(X_{j,1}, \ldots,X_{j,p_j})^T	\in \mathbb{R}^{p_{j}},\, \bm 1=(1,...,1) \in \mathbb{R}^n,$ where $X_{j,k}=1$ means that $X_{j}=k$ for $j \in \{1,...,J\}$ and $k \in \{1,...,p_{j}\}$. 
With $\bm x_{i}$ we denote the $i$-th row of the design matrix $\bm X$, hence the $i$-th observation of the covariates $j=1,...,J$. 
The logistic regression model is then given by
\begin{eqnarray}
\mathbb{E}(Y|\bm X= \bm x)= \frac{\exp(\bm x \bm \beta)}{1+\exp(\bm x \bm \beta)}\label{logreg}.
\end{eqnarray}

\subsection{$L_{0}$-Fused Group Lasso}\label{intro}

A penalized regression method minimizes the sum of the log likelihood and an appropriate penalty function. In particular, 
 \begin{eqnarray}
 M_{pen}(\bm \beta)=-L_{n}(\bm \beta)+P_{\lambda}(\bm \beta),	
 \end{eqnarray}
is minimized, where $L_{n}(\bm \beta)$ denotes the log likelihood function and $P_{\lambda}(\bm \beta)$ the penalty function of the chosen method. The penalized regression estimator is then defined as
 \begin{eqnarray}
 \hat{\bm \beta} := \argmin_{\bm \beta \in \mathbb{R}^{p}}M_{pen}(\bm \beta).	
 \end{eqnarray}

For the group lasso (see \cite{YuanLin2006}, \cite{MeierEtAl2008}), $M_{pen}$ and $\hat{\bm \beta}$ become 
	\begin{equation*}
 	P_{\lambda}^{GL}(\bm \beta):=\lambda_{1} \sum_{j=1}^J ||\bm \beta_{j}||_{\bm K_{j}} , \ \text{ and } \ 
  M^{GL}(\bm \beta) :=  -L_{n}(\bm \beta) + P_{\lambda}^{GL}(\bm \beta)  ,
 	\end{equation*}
respectively. Following \cite{YuanLin2006}, for some $\bm \xi \in \mathbb{R}^d$, $d \in \mathbb{N}$ and a  positive definite and symmetric matrix $\bm K \in \mathbb{R}^{d \times d}$, the norm $||\bm \xi ||_{\bm K}$ is defined as 
$||\bm \xi||_{\bm K}:=(\bm \xi^T \bm K \bm \xi)^{\frac{1}{2}}$.

The $L_1$ penalty applied on the differences among the parameters of a factor's levels (see \cite{BondellReich2009}, \cite{GertheissTutz2010_2}) was initially referred as CAS in \cite{BondellReich2009}. Later, \cite{OelkerEtAl2014} considered the $L_0$ penalty for these differences. In a natural way, one can bring up the name CAS-$L_0$ for the corresponding $L_0$ penalty. In the sequel, we will refer to it simply as $L_0$, whenever needed for brevity of notation. In this case it holds
   	\begin{eqnarray*}
 	&& P_{\lambda}^{L_{0}}(\bm \beta)=P_{\lambda}^{CAS-L_{0}}(\bm \beta):=	\lambda_{0} \sum_{j=1}^J \sum_{0 \leq r < s \leq p_{j}} w_{0}^{(j,rs)} ||\beta_{j,r}-\beta_{j,s}||_{0} , \\
&& M^{L_{0}}(\bm \beta)=M^{CAS-L_{0}}(\bm \beta) := -L_{n}(\bm \beta) +P_{\lambda}^{CAS-L_{0}}(\bm \beta).
 	\end{eqnarray*}

To simultaneously perform factor selection and fusion of categories in case of categorical covariates, we propose the following penalty, called $L_{0}$-FGL.
\begin{eqnarray}
P_{\lambda}(\bm \beta):= \lambda_{1} \sum_{j=1}^J ||\bm \beta_{j}||_{K_{j}}+\lambda_{0} \sum_{j=1}^J \sum_{0 \leq r < s \leq p_{j}} w_{0}^{(j,rs)} ||\beta_{j,r}-\beta_{j,s}||_{0},\label{pen1}
\end{eqnarray}
which is an intersection between the well known group lasso and the $L_{0}$ fusion penalty (CAS-$L_{0}$).
In the sequel, we denote by $||\bm t||_{2}$ for some $\bm t \in \mathbb{R}^n$ the euclidean norm $||\bm t||_{2}=\sqrt{\sum_{i=1}^n t_{i}^2}$ while sometimes we write $||\bm t||=||\bm t||_{2}$ for simplicity. 

With regard to the choice of $\bm K_{j}$, we get with $\bm K_{j}= \tilde{w}_{1}^{(j)} {\textbf{1}}_{p_j}$ and $\sqrt{\tilde{w}_{1}^{(j)}}=w_{1}^{(j)}$
\begin{eqnarray}
P_{\lambda}(\bm \beta)= \lambda_{1} \sum_{j=1}^J w_{1}^{(j)}||\bm \beta_{j}||_{2}+\lambda_{0} \sum_{j=1}^J \sum_{0 \leq r < s \leq p_{j}} w_{0}^{(j,rs)} ||\beta_{j,r}-\beta_{j,s}||_{0}.\label{argmin2}
\end{eqnarray}
In particular, we end up with $w_{1}^{(j)}=\sqrt{{p}_{j}}$ using the convenient choice $\bm K_{j}= p_{j}{\textbf{1}}_{p_j}$. 
The use of adaptive weights leads us to the so called $\textit{adaptive } L_{0} \textit{-FGL}$, analogously to the adaptive group lasso. In the following, we will use the latter choice of $\bm K_{j}$ and investigate theoretical properties both for the $L_{0}$-FGL and its adaptive version.

Recall that the $L_{0}$ penalty term of the $L_{0}$-FGL method includes in the sum differences from the reference category $\beta_{j,0}=0$, enforcing thus also factor selection. In this setting, factors selection refers to the case when all categories are fused with the reference category. The $L_{0}$-FGL estimate $\hat{\bm \beta}$ is defined as the minimizer of 
\begin{eqnarray}
M_{pen}(\bm \beta) &:=& \argmin_{\bm \beta \in \mathbb{R}^p} -L_{n}(\bm \beta)+ P_{\lambda}(\bm \beta)	\label{argminL0fgl} \\
&=& \argmin_{\bm \beta \in \mathbb{R}^p} -L_{n}(\bm \beta) + \lambda_{1} \sum_{j=1}^J w_{1}^{(j)}||\bm \beta_{j}||_{2}+\lambda_{0} \sum_{j=1}^J \sum_{0 \leq r < s \leq p_{j}} w_{0}^{(j,rs)} ||\beta_{j,r}-\beta_{j,s}||_{0}. \nonumber
\end{eqnarray}
As already mentioned, the expressions $w_{1}^{(j)}$ for the group lasso part and $w_{0}^{(j,rs)}$ for the $L_{0}$ fusion part are optional weights that allow to put the covariates and their levels on a comparable scale. Using $w_{1}^{(j)}=\sqrt{p_{j}}$ in the group lasso part accounts for the fact that the covariates may have a different number of levels. \\

Figure \ref{L0fgl.constraint} shows the value of the penalty functions for Group Lasso (left), $L_0$ (middle) and $L_0$-FGL (right) for a categorical covariate of three levels, i.e. for $\bm \beta=(0, \beta_1, \beta_2)$. In particular, for $L_0$-FGL the value of $||\bm \beta||_{2}+ ||\beta_{1}-\beta_{2}||_{0}$ for $\beta_{1}, \beta_{2} \in [-2,2]$ and $\lambda_{0}=\lambda_{1}$ is shown. It gets clear that $L_0$-FGL combines both shrinkage and fusion of levels in one penalty. Further, by tuning the tuning parameters $\lambda_0$ and $\lambda_1$ we can put the focus on selection or fusion, depending on the application context.
\begin{figure}[h!] 
\begin{center}
\includegraphics[width=\textwidth]{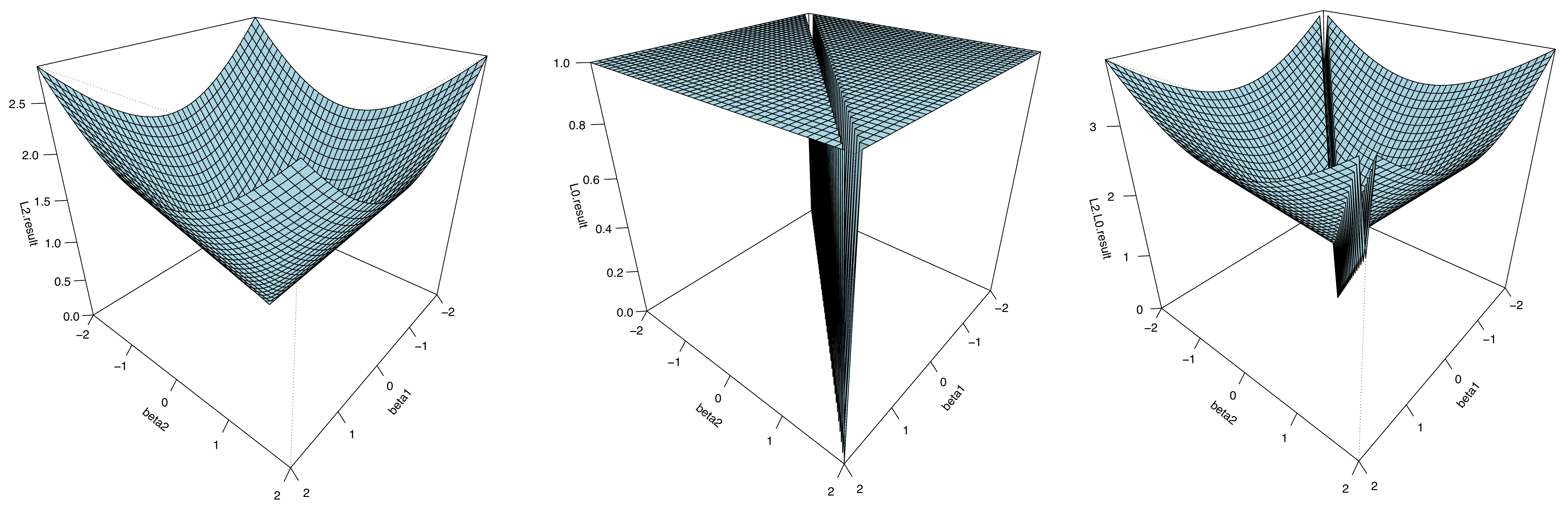}	
\caption{ Visualization of the penalty functions for the group lasso penalty $||\bm \beta||_{2}$ (left), the $L_0$ penalty $||\beta_{1}-\beta_{2}||_{0}$  (middle) and the $L_0$-FGL penalty $||\bm \beta||_{2}+ ||\beta_{1}-\beta_{2}||_{0}$ (right), where all penalties are displayed for $\beta_{1},\beta_{2} \in [-2,2]$ and the tuning parameter were chosen to be equal $\lambda_{0}=\lambda_{1}=1$}
\label{L0fgl.constraint}
\end{center}
\end{figure}

\begin{rem}[On tuning]
	Since $L_0$-FGL has two tuning parameters, a stepwise procedure is proposed for their tuning, according to which  
 \begin{enumerate}
 \item[(1)] $\lambda_{1}^*$ is determined first with cross-validation (CV) setting $\lambda_{0}=0$, and	
 \item[(2)] for fixed $\lambda_{1}=\lambda_{1}^*$, $\lambda_{0}^*$ is determined with CV. 
 \end{enumerate}
 Alternatively, CV could be performed on a two-dimensional grid but this would result to a strongly computational intensive method compared to the procedure above. Due to computational simplicity, we start with the group lasso part 
(tuning of $\lambda_{1}$), since once a factor is excluded from the model, it has not to be investigated for fusion of categories. 

\end{rem}

\section{Existence and Theoretical Properties of $L_{0}$-Fused Group Lasso}
Next, the existence and theoretical properties of $L_{0}$-FGL are investigated, including $\sqrt{n}$ consistency and a theorem about asymptotic normality. Furthermore, consistency in variable selection is analyzed. The case of $p < n $ with $p$ fixed and that of diverging number of parameters, hence $J_{n}$ and consequently $p_{n}$ depending on the sample size $n$ will be considered. 

The next Theorem states the existence of $L_{0}$-FGL. Remark \ref{ex.highdim} in the sequel argues that the existence is also ensured in a high dimensional setup with $p>n$. Notice that, in proving the existence, $p$ is always considered fixed since this is not an asymptotic property.
\begin{thm}[Existence of $L_{0}$-fused group lasso]\label{exL0fgl}
	Let $\lambda_{1} > 0, \, \lambda_{0} \geq 0$ and $0 < \sum_{i=1}^n y_{i} <n$, where $\bm y = (y_1,\ldots, y_n)^T$ with $y_i\in\{0,1\}$, $i\in\{1 \ldots, n\}$ is the vector of observed binary responses. Then, the set
	\begin{eqnarray*}
	S:=\left \{\hat{\bm \beta} \,|\, \hat{\bm \beta} = \argmin_{\bm \beta} -L_{n}(\bm \beta) + \lambda_{1} \sum_{j=1}^J ||\bm \beta_{j}||_{K_{j}}	+ \lambda_{0} \sum_{j=1}^J \sum_{0 \leq r < s \leq p_{j}} w_{0}^{(j,rs)}||\beta_{j,r}-\beta_{j,s}||_{0} \right \}	
	\end{eqnarray*}  
	is nonempty. Moreover, the value of the objective function $M_{pen}(\cdot)$ decreases if coefficients that are close enough to each other are fused. 
	\end{thm}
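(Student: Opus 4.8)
The plan is to prove existence by exploiting the fact that the $L_0$ fusion term, although discontinuous, takes only finitely many values, and thereby to reduce the problem to finitely many \emph{convex} subproblems on which the classical Weierstrass argument applies. Concretely, I would index the parameter space $\mathbb{R}^{p+1}$ by \emph{fusion patterns}: for each factor $j$ a partition of its augmented level set $\{0,1,\dots,p_j\}$ into blocks of equal coefficients (recalling $\beta_{j,0}=0$). There are finitely many such patterns $\pi$, and for each one the set $L_\pi$ obtained by forcing equality within blocks is a closed linear subspace. On all of $L_\pi$ the $L_0$ term, evaluated \emph{as though the pattern were exactly} $\pi$, equals a constant $c_\pi$, so the surrogate $g_\pi(\bm\beta):=-L_n(\bm\beta)+\lambda_1\sum_{j=1}^J\|\bm\beta_j\|_{K_j}+c_\pi$ is continuous and convex on $L_\pi$.

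The second step is to show each $g_\pi$ attains its minimum on $L_\pi$, for which coercivity along $L_\pi$ suffices. Passing to a recession direction $\bm d\neq 0$: if $\bm d$ has a nonzero factor component, the group lasso term grows linearly since $K_j$ is positive definite, while $-L_n\geq 0$, forcing $g_\pi\to\infty$. The only remaining unbounded direction is the unpenalized intercept $\beta_0$, and here the hypothesis $0<\sum_i y_i<n$ is exactly what is needed: with at least one response equal to $0$ and at least one equal to $1$, sending $\beta_0\to\pm\infty$ (factor part fixed) drives $-L_n\to\infty$. Hence $g_\pi$ is continuous and coercive on $L_\pi$ and attains a minimizer $\bm\beta^*_\pi$.

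I would then glue the pieces together. Writing $\mathbb{R}^{p+1}$ as the disjoint union of the sets $A_\pi$ of parameters whose \emph{exact} equality pattern is $\pi$, any $\bm\beta$ lies in some $A_{\pi'}$, where $M_{pen}(\bm\beta)=g_{\pi'}(\bm\beta)\geq g_{\pi'}(\bm\beta^*_{\pi'})\geq M_{pen}(\bm\beta^*_{\pi'})$; the last inequality holds because the \emph{true} $L_0$ penalty at $\bm\beta^*_{\pi'}$ can only be smaller than the nominal constant $c_{\pi'}$ when extra coincidences occur. Minimizing over the finite family $\{\bm\beta^*_\pi\}$ then shows $\inf_{\mathbb{R}^{p+1}}M_{pen}$ is attained, so $S\neq\emptyset$. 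I expect the main obstacle to be precisely this bookkeeping of the $L_0$ discontinuity: one must always compare $M_{pen}$ against the \emph{over}-counted surrogate $g_\pi$, never the reverse, since that orientation is what lets the chain of inequalities close.

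For the final assertion I would argue by a local Lipschitz/perturbation estimate (relevant when $\lambda_0>0$). Take $\bm\beta$ with $0<|\beta_{j,r}-\beta_{j,s}|$ small and let $\bm\beta'$ collapse the two levels to a common value. The $L_0$ term drops by the fixed positive amount $\lambda_0\,w_0^{(j,rs)}$, whereas the continuous part $-L_n+\lambda_1\sum_j\|\bm\beta_j\|_{K_j}$ is locally Lipschitz and changes only by $O(|\beta_{j,r}-\beta_{j,s}|)$; choosing the gap below the threshold at which the linear change is dominated by the discrete jump yields $M_{pen}(\bm\beta')<M_{pen}(\bm\beta)$. The one point requiring care is that the small move must not inflate another $L_0$ difference: keeping the perturbation smaller than every other nonzero gap (and collapsing whole blocks rather than single levels when ties are already present) keeps every remaining difference on the same side of zero, so no competing penalty term increases.
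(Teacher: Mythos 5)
Your proposal is correct, but it proves the first claim by a genuinely different route than the paper. The paper establishes $S\neq\emptyset$ by exhibiting a concrete element: it invokes Lemma 1 of Meier et al.\ (which uses exactly the condition $0<\sum_i y_i<n$) to get existence of the group lasso estimator $\hat{\bm\beta}^{GL}$, and then checks that in a sufficiently small neighborhood the $L_0$ term cannot decrease (differences that are nonzero at $\hat{\bm\beta}^{GL}$ stay nonzero, and in the case $\hat{\bm\beta}^{GL}=\bm 0$ the perturbed $L_0$ term is trivially $\geq 0$), so $\hat{\bm\beta}^{GL}$ remains a (local) minimizer of the full objective $M_{pen}$. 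You instead stratify $\mathbb{R}^{p+1}$ into the finitely many fusion-pattern subspaces $L_\pi$, minimize the continuous convex surrogate $g_\pi$ on each via coercivity (the $\lambda_1>0$ hypothesis handling factor directions through positive definiteness of $\bm K_j$, and $0<\sum_i y_i<n$ handling the unpenalized intercept), and glue with the one-sided inequality $M_{pen}(\bm\beta)=g_{\pi'}(\bm\beta)\geq g_{\pi'}(\bm\beta^*_{\pi'})\geq M_{pen}(\bm\beta^*_{\pi'})$ --- whose orientation you correctly identify as the crux, since extra coincidences at $\bm\beta^*_{\pi'}$ only lower the true penalty. What each approach buys: yours is self-contained (no appeal to an external group-lasso existence lemma) and delivers a \emph{global} minimizer, which is actually stronger than what the paper's argument shows --- the paper verifies only local minimality of $\hat{\bm\beta}^{GL}$, and moreover leans on the claim that $\hat{\bm\beta}^{GL}$ is either $\bm 0$ or has pairwise distinct coordinates, which your stratification avoids entirely; the paper's route is shorter, identifies an explicit member of $S$, and dovetails with the second claim by showing fusion strictly improves on the unfused group lasso solution. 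One small point to make explicit in your write-up: the passage from ray-wise coercivity of $g_\pi$ to bounded sublevel sets uses the convexity of $g_\pi$ (for general continuous functions directional coercivity does not imply coercivity), so state that you are applying this standard convex-analysis fact. For the second assertion your argument and the paper's are essentially the same continuity-versus-discrete-jump comparison; the paper works with all adjacent coefficients distinct and phrases the conclusion as holding for $\lambda_0$ large enough and $\epsilon$ small enough, whereas your version (fix $\lambda_0>0$, shrink the gap, keep the perturbation below every other nonzero gap, and collapse whole blocks when ties already exist) is the slightly more careful bookkeeping, since it rules out breaking pre-existing ties and thereby inflating a competing penalty term.
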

	\begin{proof}
		see Appendix \ref{app.proof}.
	\end{proof}
\begin{rem}[Existence in high dimensional case $p>n$] \label{ex.highdim} For the $L_{0}$-FGL estimator with $\lambda_{1} >0$ and $\lambda_{0}\geq 0$, the proof above is not restricted to $p \leq n$, hence existence can be ensured in the high dimensional case $p>n$. Recall that a condition for existence when $\lambda_1=0$ or both $\lambda_{1}=\lambda_{0}=0$ is that the maximum likelihood (ML) estimator exists but this is not our focus here since in this case the estimator is not the $L_{0}$-FGL but the CAS-$L_{0}$ or ML, respectively. 
\end{rem}	
For investigating the theoretical properties, some regularity conditions are required, which are provided in Appendix \ref{app.reg1}. We start with a $\sqrt{n}$ consistency result for fixed $p<n$.
\begin{thm}[$\sqrt{n}$ consistency for fixed $p<n$]\label{root-n-consistency}
Let the regularity conditions (Reg1)-(Reg3) from Appendix \ref{app.reg1} hold. Furthermore, assume that $p<n$ is fixed. Set $a_{n}^{1}:=\max \{\lambda_{n}^1w_{1}^{(j)}\,;\, j \leq J\}$ and $a_{n}^{0}:= \max \{\lambda_{n}^{0} w_{0}^{(j,rs)}\,;\, 1\leq r < s \leq p_{j}, j=1,...,J\}$ and assume $a_{n}^{1}/ \sqrt{n} \rightarrow_{P} 0$, $a_{n}^{0} \rightarrow_{P} K$ where $K \in \mathbb{R}$ is some arbitrary constant. Then, it holds that there exists some $L_{0}$-FGL estiamtor $\hat{\bm \beta}$ satisfying $||\hat{\bm \beta}-\bm \beta^*||_{2}=O_{p}\left (\frac{1}{\sqrt{n}}\right )$ for $\hat{\bm \beta}$ the $L_{0}$-FGL estimator. 
\begin{proof}
	see Appendix \ref{app.proof}.
\end{proof}
\end{thm}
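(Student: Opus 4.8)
The plan is to follow the classical local-minimizer argument of \cite{FanLi2001}: I would show that for every $\epsilon>0$ there exists a constant $C>0$ such that, for all large $n$,
\[
P\!\left(\inf_{||\bm u||_{2}=C} \big[M_{pen}(\bm\beta^*+n^{-1/2}\bm u)-M_{pen}(\bm\beta^*)\big] > 0\right) \geq 1-\epsilon,
\]
where $\bm\beta^*$ denotes the true parameter vector. Since the $L_0$ part of $P_\lambda$ makes $M_{pen}$ only lower semicontinuous (the scalar map $x\mapsto ||x||_0=\mathbf 1(x\neq 0)$ is lower semicontinuous), $M_{pen}$ still attains its minimum over the closed ball $\{||\bm\beta-\bm\beta^*||_2\le C n^{-1/2}\}$; because $Q_n(\bm 0)=0$ while the display above makes $Q_n$ strictly positive on the sphere, this minimizer is forced into the interior and is therefore a local minimizer of $M_{pen}$ lying within $O_p(n^{-1/2})$ of $\bm\beta^*$, which is exactly the claimed rate.

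Writing $Q_n(\bm u):=M_{pen}(\bm\beta^*+n^{-1/2}\bm u)-M_{pen}(\bm\beta^*)$, I would split it into a likelihood and a penalty increment. First I would Taylor-expand the log-likelihood: under (Reg1)--(Reg3) the score satisfies $n^{-1/2}\nabla L_n(\bm\beta^*)=O_p(1)$ and $-n^{-1}\nabla^2 L_n(\bm\beta^*)\rightarrow_{P} I(\bm\beta^*)$, the Fisher information, which is positive definite. Hence
\[
-\big[L_n(\bm\beta^*+n^{-1/2}\bm u)-L_n(\bm\beta^*)\big] = -n^{-1/2}\bm u^T\nabla L_n(\bm\beta^*)+\tfrac12\bm u^T I(\bm\beta^*)\bm u+o_p(||\bm u||_2^2),
\]
whose leading behaviour for large $C=||\bm u||_2$ is the positive quadratic $\tfrac12\bm u^T I(\bm\beta^*)\bm u$, the linear term being only $O_p(||\bm u||_2)$.

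Next come the two penalty increments. For the group-lasso part the reverse triangle inequality gives $\big|\,||\bm\beta_j^*+n^{-1/2}\bm u_j||_2-||\bm\beta_j^*||_2\,\big|\le n^{-1/2}||\bm u_j||_2$, so the whole group-lasso increment is bounded in absolute value by $n^{-1/2}a_n^1\sqrt{J}\,||\bm u||_2=o_p(||\bm u||_2)$ under the assumption $a_n^1/\sqrt n\rightarrow_{P} 0$. The decisive and most delicate step is the $L_0$ increment, where I would exploit the discreteness of $||\cdot||_0$. For any pair $(j,r,s)$ with $\beta_{j,r}^*\neq\beta_{j,s}^*$, the $n^{-1/2}$-perturbation leaves the difference nonzero for all large $n$, so such pairs contribute $0$ to the increment; for a pair with $\beta_{j,r}^*=\beta_{j,s}^*$ the perturbed difference is either still $0$ or becomes nonzero, so the contribution equals $\lambda_n^0 w_0^{(j,rs)}\,\mathbf 1(u_{j,r}\neq u_{j,s})\ge 0$. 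Consequently, for $n$ large the $L_0$ increment is \emph{nonnegative}, and since there are only finitely many pairs (as $p$ is fixed) and $a_n^0\rightarrow_{P} K$, it is $O_p(1)$. This nonnegativity is precisely what rescues the argument despite the non-convexity and discontinuity of the $L_0$ penalty, and it explains why the weaker condition $a_n^0\rightarrow_{P} K$, rather than $a_n^0/\sqrt n\to 0$, suffices for this penalty.

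Combining the pieces yields, for $n$ large,
\[
Q_n(\bm u)\ \ge\ \tfrac12\bm u^T I(\bm\beta^*)\bm u\;-\;O_p(||\bm u||_2)\;-\;o_p(||\bm u||_2)\;+\;o_p(||\bm u||_2^2),
\]
the nonnegative $L_0$ term having been dropped to the favourable side. Taking $||\bm u||_2=C$ with $C$ large, the positive-definite quadratic term dominates the linear remainders with probability at least $1-\epsilon$, giving $Q_n(\bm u)>0$ uniformly on the sphere and hence the existence of the root-$n$ consistent local minimizer. I expect the main obstacle to be exactly the rigorous treatment of the $L_0$ term: establishing uniformly over the sphere that, eventually in $n$, no genuinely nonzero true difference is annihilated by the perturbation (so its contribution is controlled and nonnegative), and reconciling this discontinuous, non-convex increment with the smooth Taylor expansion of the likelihood inside a single lower-semicontinuity-based existence argument.
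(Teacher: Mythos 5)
Your proposal is correct and rests on the same Fan--Li sphere argument as the paper's proof of Theorem~\ref{root-n-consistency} (show that $M_{pen}(\bm\beta^*+n^{-1/2}\bm u)-M_{pen}(\bm\beta^*)>0$ on the sphere $\|\bm u\|_2=c$ with probability at least $1-\epsilon$, then extract an interior local minimizer), but you resolve the two delicate points by genuinely different means. First, where the paper handles the discontinuity of the $L_0$ part via the bespoke Lemma~\ref{lemma.ball} --- attainment of the infimum on the closed ball proved by partitioning the ball along the fusion hyperplanes where the penalty jumps, and carried out in detail only for $p=2$, $J=1$ --- you invoke lower semicontinuity: since $\lambda_0 w_0^{(j,rs)}\ge 0$ and $x\mapsto \mathbf{1}(x\neq 0)$ is lower semicontinuous, $M_{pen}$ is the sum of a continuous function and a nonnegative combination of lower semicontinuous ones, hence attains its minimum on the compact ball in any dimension, and the strict inequality on the sphere forces that minimizer inside; this is cleaner and dispenses with the low-dimensional case analysis. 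Second, for the $L_0$ increment the paper uses only the crude bound that each difference of $L_0$ ``norms'' lies in $[-1,1]$, so the increment is $O_p(1)$ by $a_n^0\rightarrow_{P}K$, and then lets the quadratic $\tfrac12\bm u^T\bm I_F(\bm\beta^*)\bm u\sim c^2$ dominate for $c$ large; you prove the sharper statement that the increment is eventually nonnegative uniformly on the sphere (truly distinct differences survive the perturbation since $|u_r-u_s|\le 2c$ is fixed while $n^{-1/2}\to 0$; equal true differences contribute $\lambda_n^0 w_0^{(j,rs)}\mathbf{1}(u_{j,r}\neq u_{j,s})\ge 0$), so the term can simply be dropped to the favourable side. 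Your refinement buys a conceptual explanation of why $a_n^0\rightarrow_{P}K$ rather than $a_n^0/\sqrt n\to 0$ suffices, while the paper's bound is more elementary; both close the argument. Two cosmetic remarks: the $o_p(\|\bm u\|_2^2)$ remainder in your combined lower bound should appear with the unfavourable sign (subtracted), though on a fixed sphere it is $o_p(1)$ and immaterial; and your group-lasso bound $n^{-1/2}a_n^1\sqrt{J}\,\|\bm u\|_2$ is slightly tighter than the paper's $n^{-1/2}a_n^1 J\,\|\bm u\|_2$, both being $o_p(\|\bm u\|_2)$ under $a_n^1/\sqrt n\rightarrow_{P}0$.
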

Next, the theorem above is extended to the case of non-fixed number of parameters. Hence $J=J_{n}$ depends on $n$ and letting $n\rightarrow \infty$ it may happen that $J_{n}$ and $p_{n}$, respectively, also tend to infinity. 
For proving Theorem \ref{consistency-highdim}, the regularity conditions of Appendix \ref{app.reg1} needs to be slightly modified and are provided  in Appendix \ref{app.reg2}. Even though some parts of the regularity conditions in \ref{app.reg2} correspond to (Reg1)-(Reg3) of \ref{app.reg1}, we will state them independently for adjusting the notation for the case of diverging number of parameters, since in this case several parameters depend on $n$. 
The following theorem shows estimation consistency in the case of a diverging number of parameters, or total number of levels, respectively.
\begin{thm}[Consistency in the diverging case $p_{n} < n$] \label{consistency-highdim}
Let the regularity conditions (div.Reg1)-(div.Reg3) of Appendix \ref{app.reg2} hold. Assume that $J=J_{n}$ hence $J_{n}$, and $p_{n}$ respectively, may grow with the sample size. In addition, let $a_{n}^1$ and $a_{n}^0$ be given analogously to Theorem \ref{root-n-consistency} and require that they exist. With $\alpha_{n}:= \sqrt{\frac{p_{n}}{n}}$ we assume $\alpha_{n}a_{n}^1 J_{n} \rightarrow c_{1}$ and $a_{n}^0 \, p_{n}(p_{n}-1)\rightarrow c_{2}$ for some constants $c_{1},c_{2}>0$ as $n \rightarrow \infty$. Lastly, we assume $\frac{p_{n}^4}{n} \rightarrow 0$. Then, it holds that there exists an $L_{0}$-FGL estimator $\hat{\bm \beta}$ satisfying $||\hat{\bm \beta}_{n}-\bm \beta^*||_{2}=O_{p}(\alpha_{n})$.
\end{thm}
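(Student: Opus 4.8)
The plan is to adapt the standard $\alpha_{n}$-ball argument (in the spirit of Fan and Peng) to the present non-convex, discontinuous penalty. Writing $\bm \beta = \bm \beta^* + \alpha_{n}\bm u$ with $\alpha_{n}=\sqrt{p_{n}/n}$, it suffices to show that for every $\varepsilon>0$ there is a constant $C$ such that, with probability at least $1-\varepsilon$ for all large $n$,
\begin{eqnarray*}
\inf_{\|\bm u\|=C} M_{pen}(\bm \beta^* + \alpha_{n}\bm u) > M_{pen}(\bm \beta^*).
\end{eqnarray*}
Because each summand $\|\beta_{j,r}-\beta_{j,s}\|_{0}$ is lower semicontinuous, $M_{pen}$ attains its minimum over the compact ball $\{\|\bm u\|\le C\}$ (cf. Theorem \ref{exL0fgl}); the displayed inequality then forces that minimizer into the open ball, yielding an $L_{0}$-FGL estimator with $\|\hat{\bm \beta}_{n}-\bm \beta^*\|=O_{p}(\alpha_{n})$. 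Since $-L_{n}$ and the group lasso part are convex, the same inequality in fact propagates along rays to every $\bm \beta$ outside the ball, so the global minimizer of Theorem \ref{exL0fgl} is the one captured.

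I decompose $M_{pen}(\bm \beta^* + \alpha_{n}\bm u)-M_{pen}(\bm \beta^*)$ into the log-likelihood difference and the two penalty differences. A third-order Taylor expansion of $-L_{n}$ gives
\begin{eqnarray*}
-L_{n}(\bm \beta^*+\alpha_{n}\bm u)+L_{n}(\bm \beta^*)= -\alpha_{n}\nabla L_{n}(\bm \beta^*)^{T}\bm u + \tfrac{1}{2}\alpha_{n}^2 \bm u^{T}\bm I_{n}(\bm \beta^*)\bm u + R_{n},
\end{eqnarray*}
with $\bm I_{n}=-\nabla^2 L_{n}$. Since the score is a sum of $n$ mean-zero terms, $\mathbb E\|\nabla L_{n}(\bm \beta^*)\|^2=\operatorname{tr}\bm I_{n}(\bm \beta^*)=O(np_{n})$ under the regularity conditions, so the linear term is $\alpha_{n}\|\nabla L_{n}\|\,\|\bm u\|=O_{p}(p_{n})\,\|\bm u\|$; the bounded-eigenvalue condition yields $\bm u^{T}\bm I_{n}\bm u\ge \kappa\, n\|\bm u\|^2$, whence the quadratic term is at least $\tfrac{\kappa}{2}p_{n}\|\bm u\|^2$. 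On the sphere $\|\bm u\|=C$ the quadratic term $\tfrac{\kappa}{2}p_{n}C^2$ dominates the linear $O_{p}(p_{n})C$ once $C$ is large, which is precisely the mechanism driving the $\alpha_{n}$ rate.

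The two penalty differences must be shown negligible relative to $p_{n}$. For the group lasso part, the reverse triangle inequality and Cauchy--Schwarz give
\begin{eqnarray*}
\Big|\lambda_{1}\sum_{j}w_{1}^{(j)}\big(\|\bm \beta_{j}^*+\alpha_{n}\bm u_{j}\|-\|\bm \beta_{j}^*\|\big)\Big|\le \alpha_{n}a_{n}^{1}\sum_{j}\|\bm u_{j}\|\le \alpha_{n}a_{n}^{1}\sqrt{J_{n}}\,\|\bm u\|,
\end{eqnarray*}
and since $\alpha_{n}a_{n}^{1}J_{n}\to c_{1}$ this is $O(C/\sqrt{J_{n}})=o(p_{n})$. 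For the $L_{0}$ part each indicator changes by at most one, so its difference is bounded in absolute value by $a_{n}^{0}\sum_{j}\binom{p_{j}+1}{2}\le \tfrac{1}{2}a_{n}^{0}(p_{n}+1)p_{n}$, which tends to the constant $\tfrac{1}{2}c_{2}=O(1)$ by the assumption $a_{n}^{0}p_{n}(p_{n}-1)\to c_{2}$. Both are therefore dominated by the quadratic term of order $p_{n}$, so that neither penalty difference can overturn the log-likelihood gap on the sphere.

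The main obstacle is controlling the remainder $R_{n}$, and this is where $p_{n}^4/n\to 0$ enters. Exploiting that the dummy-coded row $\bm x_{i}$ has only $J_{n}+1$ nonzero (unit) entries, so $|\bm x_{i}\bm u|\le\sqrt{J_{n}+1}\,\|\bm u\|$, and bounding the third log-likelihood derivative of the logistic model through $\sum_{a,b,c}\partial_{abc}(-L_{n})\,u_a u_b u_c \lesssim \sum_{i}|\bm x_{i}\bm u|^3$ yields
\begin{eqnarray*}
|R_{n}|\lesssim \alpha_{n}^3\sum_{i=1}^n|\bm x_{i}\bm u|^3\lesssim \alpha_{n}^3\, n\, J_{n}^{3/2}\|\bm u\|^3 = p_{n}^{3/2}J_{n}^{3/2}n^{-1/2}\,C^3.
\end{eqnarray*}
Using $J_{n}\le p_{n}$ this is of order $p_{n}^{3}n^{-1/2}C^3$, which is $o(p_{n})$ exactly when $p_{n}^4/n\to 0$; hence $R_{n}$ is negligible against the quadratic term. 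Combining the four pieces, for $C$ and $n$ large the log-likelihood difference is at least $\tfrac{\kappa}{4}p_{n}C^2>0$ with probability $\ge1-\varepsilon$, overwhelming both penalty differences and establishing the displayed sphere inequality. The delicate points are (i) making the eigenvalue, trace and third-derivative bounds uniform over the shrinking ball under the diverging-$p_{n}$ regularity conditions, and (ii) checking that the discontinuity of the $L_{0}$ term does not obstruct the passage from the sphere inequality to an attained minimizer, which the lower semicontinuity of $\|\cdot\|_{0}$ together with the convexity of the remaining part resolves.
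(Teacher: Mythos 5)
Your proposal follows essentially the same architecture as the paper's proof: the $\alpha_n$-sphere inequality $\inf_{\|\bm u\|=C}M_{pen}(\bm\beta^*+\alpha_n\bm u)>M_{pen}(\bm\beta^*)$, a third-order Taylor decomposition of the likelihood with the linear term $O_p(p_n)\|\bm u\|$ obtained from $\mathbb{E}\|\nabla L_n(\bm\beta^*)\|_2^2=O(np_n)$, the bounds $O_p(1)\|\bm u\|_2$ and $O_p(1)$ on the group lasso and $L_0$ differences from the assumptions $\alpha_n a_n^1 J_n\rightarrow c_1$ and $a_n^0p_n(p_n-1)\rightarrow c_2$, and a cubic remainder of order $p_n^3/\sqrt{n}=o(p_n)$ killed by $p_n^4/n\rightarrow 0$. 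Two of your deviations are actually improvements. Your attainment argument — each summand $\|\cdot\|_0$ is lower semicontinuous, hence $M_{pen}$ attains its minimum on the compact ball, and the sphere inequality pushes the minimizer inside — is cleaner and more general than the paper's Lemma \ref{lemma.ball}, which partitions the ball along the fusion hyperplanes and is only written out for $p=2$, $J=1$. Your remainder bound via the design structure ($|\bm x_i\bm u|\leq\sqrt{J_n+1}\,\|\bm u\|_2$ and boundedness of $\varphi'''$ for the logistic model) is a model-specific alternative to the paper's route through (div.Reg3) and Cauchy--Schwarz, but it lands on the same $p_n^{3}n^{-1/2}\|\bm u\|^3$ bound, so both are valid.

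There is, however, one genuine gap: your treatment of the quadratic term. You assert that "the bounded-eigenvalue condition yields $\bm u^T\bm I_n\bm u\geq\kappa n\|\bm u\|^2$" with $\bm I_n=-\nabla^2L_n(\bm\beta^*)$, but (div.Reg2) concerns the \emph{expected} (Fisher) information, not the random sample Hessian. For fixed $p$ the law of large numbers closes this, but in the diverging regime one needs the operator-norm concentration $\|\tfrac{1}{n}\nabla^2L_n(\bm\beta^*)+\bm I_F(\bm\beta^*)\|=o_p(1/p_n)$, which is exactly Lemma 8 of \cite{FanPeng2004} and is the \emph{second} place where $p_n^4/n\rightarrow 0$ is consumed — in your write-up that assumption is charged only to the remainder $R_n$, and you flag the uniformity of the eigenvalue bound as "delicate point (i)" without supplying the argument; the paper closes it by invoking that lemma to get $T_2^{(n)}=\tfrac{1}{2}p_n\bm u^T(\bm I_F(\bm\beta^*)+o_p(1))\bm u$. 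Separately, your side claim that convexity of $-L_n$ and the group lasso part lets the sphere inequality "propagate along rays" so that the \emph{global} minimizer of Theorem \ref{exL0fgl} is captured does not follow as stated: the inequality you establish is for $M_{pen}$, which is not convex because of the $L_0$ part, so monotonicity along rays fails. It could be patched by noting that the $L_0$ part oscillates by at most $O(1)$ while the convex part's margin on the sphere is of order $p_n C^2$, but that argument is absent from your write-up — and it is also unnecessary, since the theorem, like the paper's proof, only asserts existence of a local minimizer within the $O_p(\alpha_n)$ ball.
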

\begin{proof}
	see Appendix \ref{app.proof}.
\end{proof}

\begin{rem}[On the assumptions of Theorem \ref{consistency-highdim}]
The assumption $p_{n}^4/n \rightarrow 0$ implies $p_{n}^2/\sqrt{n} \rightarrow 0$. Hence  the assumption $\alpha_{n}a_{n}^1J_{n}\rightarrow c_{1}$ holds for example if $a_{n}^1$ converges to some constant, since 
\begin{eqnarray*}
	\alpha_{n}a_{n}^1J_{n}=\sqrt{\frac{p_{n}}{n}}J_{n}a_{n}^1\leq \sqrt{\frac{p_{n}}{n}}p_{n}a_{n}^1=\frac{p_{n}^{3/2}}{\sqrt{n}}a_{n}^1\leq \underbrace{\frac{p_{n}^{2}}{\sqrt{n}}}_{\rightarrow 0}a_{n}^1.
\end{eqnarray*}
For the requirement that $a_{n}^0 p_{n}(p_{n}-1) \rightarrow c_{2}$, we observe the case of weights chosen to be constant and equal to one for the $L_{0}$ part. Hence, $a_{n}^0=\lambda_{n}^0$ and we require $\lambda_{n}^0 p_{n}(p_{n}-1) \rightarrow c_{2}$. Since, even in the high dimensional case, it is common to assume  $\lambda_{n}^0 \rightarrow 0$, this assumption is not too restrictive. $\lambda_{n}^0$ does not even have to converge to zero in this case, it is sufficient that it converges to a constant faster than $p_{n}(p_{n}-1)$ converges to infinity. 
The same applies in case of other weights, with $a_{n}^0$ having to converge to a constant faster than $p_{n}(p_{n}-1)$ converges to infinity.
\end{rem}

After having shown the consistency result for the cases of fixed and diverging number of parameters, oracle properties are investigated next.
For this it is required that the true underlying model is sparse, as defined next.

\begin{defi}\label{fisher}
	The true underlying structure is sparse if, without loss of generality,  the true active set $A:= \{j\,|\, \bm \beta_{j}^* \neq \bm 0 \}$ can be written as $A^*=\{1,...,j_{0}\}$ with $j_{0}<J$. In this case, the Fisher information matrix $\bm I_{F}(\bm \beta^*)$ is given in the following form 
\begin{eqnarray}
\bm I_{F}(\bm \beta^*)= \begin{bmatrix}
\bm I_{11} & \bm I_{12} \\
\bm I_{21} & \bm I_{22} 
\end{bmatrix}	\ ,
\end{eqnarray}
where $\bm I_{11} \in \mathbb{R}^{j_{0}\times j_{0}}$.
\end{defi}

\begin{thm}[Existence of estimator satisfying asymptotic normality property for the case fixed $p<n$]\label{asynorm} Assume that (Reg1)-(Reg3) of Appendix \ref{app.reg1} hold and the true underlying structure is sparse (see Definition \ref{fisher}). For the group lasso part we choose the adaptive weights $w_{1}^{(j)}=||\tilde{\bm \beta}_{j}||_{2}^{-\gamma}$ for some arbitrarily chosen $\gamma >0$ where $\tilde{\bm \beta}$ is the unpenalized MLE. Furhtermore, let $\lambda_{n}^1 \cdot n^{-1/2} \rightarrow 0$ and $\lambda_{n}^{1} \cdot n^{(\gamma-1)/2} \rightarrow \infty $. For the tuning of the $L_{0}$ part, we assume $a_{n}^{0}:= \max \{\lambda_{n}^{0} w_{0}^{(j,rs)}\,;\, 1\leq r < s \leq p_{j}, j=1,...,J\}  \rightarrow 0 \,\,(n\rightarrow \infty)$. Then, it holds that there exists an $L_{0}$-FGL estimator $\hat{\bm \beta}= \argmin_{\bm \beta \in \mathbb{R}^p} -L_{n}(\bm \beta)+P_{\lambda}(\bm \beta)$, where $P_{\lambda}(\bm \beta)$ given by (\ref{pen1}), satisfying
\begin{eqnarray*}
\sqrt{n}(\hat{\bm \beta}-\bm \beta^*) \rightarrow_{d} N(0, \bm \Sigma).	
\end{eqnarray*}
Here, $\bm \Sigma=\bm I_{11}^{-1}$.
\end{thm}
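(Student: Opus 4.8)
The plan is to follow the classical two-stage oracle-property argument of \cite{FanLi2001} and \cite{Zou2006}, adapted to the group structure and to the non-convex $L_0$ fusion term. I would construct the \emph{oracle estimator} $\hat{\bm\beta}^{or}$ as the minimiser of the penalized objective restricted to the subspace in which every inactive factor ($j\notin A^*$) is held at $\bm 0$ and the genuinely distinct active levels are left unfused; on this subspace the $L_0$ term is constant and the group-lasso weights for the active factors stay bounded. I would then show two things: (i) $\hat{\bm\beta}^{or}$ obeys $\sqrt n(\hat{\bm\beta}^{or}_{A^*}-\bm\beta^*_{A^*})\rightarrow_d N(\bm 0,\bm I_{11}^{-1})$, and (ii) with probability tending to one $\hat{\bm\beta}^{or}$ is a local minimiser of the full $L_0$-FGL objective, so that a minimiser with the stated limiting law exists.

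For part (i) I would apply the $\sqrt n$-consistency argument of Theorem \ref{root-n-consistency} to the active block, where the adaptive weights $w_1^{(j)}=||\tilde{\bm\beta}_j||_2^{-\gamma}\rightarrow_P||\bm\beta^*_j||_2^{-\gamma}<\infty$ remain bounded, to localise $\hat{\bm\beta}^{or}$ within an $O_P(n^{-1/2})$ ball. Reparametrising $\bm u=\sqrt n(\bm\beta_{A^*}-\bm\beta^*_{A^*})$ and Taylor-expanding $-L_n$ about $\bm\beta^*$, the quadratic term converges to $\tfrac12\bm u^T\bm I_{11}\bm u$ by the law of large numbers, while the linear term converges, by the central limit theorem for the score $n^{-1/2}\nabla L_n(\bm\beta^*)$, to a $N(\bm 0,\bm I_{11})$-driven linear form. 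Both penalty contributions are then shown negligible: the active group-lasso increment is Lipschitz and bounded by $\lambda_n^1 w_1^{(j)}||\bm u||/\sqrt n$ with $\lambda_n^1 n^{-1/2}\to 0$, and the whole $L_0$ penalty is bounded by $a_n^0\sum_j\binom{p_j+1}{2}=O(a_n^0)\to 0$. A standard argmin-convergence argument (the restricted objective being convex once the fusion pattern is fixed) then gives $\bm u\rightarrow_d N(\bm 0,\bm I_{11}^{-1})$.

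For part (ii) I would exploit the divergence of the adaptive weights on the inactive factors. Since $||\tilde{\bm\beta}_j||_2=O_P(n^{-1/2})$ for $j\notin A^*$, we have $w_1^{(j)}$ of order $n^{\gamma/2}$ in probability. Comparing the objective at $\hat{\bm\beta}^{or}$ to that obtained by moving an inactive $\bm\beta_j$ off $\bm 0$ inside the $O_P(n^{-1/2})$ ball, the group-lasso gain is of order $\lambda_n^1 w_1^{(j)}||\bm\beta_j||_2\sim\lambda_n^1 n^{(\gamma-1)/2}$, whereas the change in $-L_n$ is $O_P(\sqrt n)\cdot||\bm\beta_j||_2=O_P(1)$; the assumption $\lambda_n^1 n^{(\gamma-1)/2}\to\infty$ makes the penalty dominate, so no inactive coordinate can be activated, and the $L_0$ term only reinforces this by rewarding fusion of an inactive level with the reference.

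The main obstacle is the non-convexity and discontinuity of the $L_0$ fusion penalty, which rules out a direct subgradient/KKT characterisation of the minimiser. I would therefore argue throughout by comparing objective-function \emph{values} rather than derivatives, invoking the monotonicity established in Theorem \ref{exL0fgl} (the objective decreases when sufficiently close coefficients are fused) to control the fusion behaviour near $\hat{\bm\beta}^{or}$. The delicate point is to guarantee a local minimiser that simultaneously fuses every inactive level with the reference yet leaves the truly distinct active levels unfused, so that the restricted problem is exactly the oracle logistic likelihood; the condition $a_n^0\to 0$ is precisely what removes any asymptotic incentive to spuriously fuse distinct active coefficients, and verifying this non-fusion of the active block rigorously is where the argument must be made with care.
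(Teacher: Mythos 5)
Your proposal is correct in substance, but it takes a genuinely different route from the paper. The paper argues in the style of \cite{Zou2006}: it sets $H_{n}(\bm u):=M_{pen}\left(\bm \beta^*+\frac{\bm u}{\sqrt{n}}\right)-M_{pen}(\bm \beta^*)$, Taylor-expands the likelihood part into $T_{1}^{(n)}+T_{2}^{(n)}+T_{3}^{(n)}$ with $T_{1}^{(n)}\rightarrow_{d}\bm u^{T}N(\bm 0,\bm I_{F}(\bm \beta^*))$, $T_{2}^{(n)}\rightarrow_{p}\frac{1}{2}\bm u^{T}\bm I_{F}(\bm \beta^*)\bm u$ and $T_{3}^{(n)}\rightarrow_{p}0$, shows that the adaptive group-lasso increment tends to $0$ on active factors and to $\infty$ whenever $\bm u_{j}\neq\bm 0$ for an inactive factor (using precisely $\lambda_{n}^{1}n^{-1/2}\rightarrow 0$ and $\lambda_{n}^{1}n^{(\gamma-1)/2}\rightarrow\infty$), bounds the entire $L_{0}$ increment by a multiple of $a_{n}^{0}\rightarrow 0$, and then reads the conclusion off the pointwise limit $H(\bm u)$, which is a quadratic in $\bm u_{A}$ on $\{u_{j}=0,\ j\notin A\}$ and $+\infty$ otherwise; a closing remark concedes that, since $H_{n}$ is not convex because of the $L_{0}$ part, only the \emph{existence} of a minimizer with the stated limit can be claimed. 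Your two-stage Fan--Li-type construction (restricted oracle minimizer, then a value-comparison verification that it is, with probability tending to one, a local minimizer of the full objective) redistributes the same ingredients: your part (i) is essentially the paper's limit of $H_{n}$ restricted to $\{u_{j}=0,\ j\notin A\}$, and your part (ii) makes explicit, via the comparison $\lambda_{n}^{1}n^{(\gamma-1)/2}\rightarrow\infty$ versus the $O_{P}(\sqrt{n})\|\bm \beta_{j}\|_{2}$ likelihood change, what in the paper is only implicit in the divergence of $H_{n}$ off the active subspace. What each buys: the paper's route is shorter, but its final argmin-convergence step is exactly the point weakened by the non-convexity remark; your route is the standard rigorous device for non-convex penalties and confronts head-on the one genuinely delicate point you flag, fusion within the active block. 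That worry can be resolved along the lines you indicate: for a sufficiently small (random) neighborhood radius no fusion hyperplane through truly distinct active estimates is reached at all, so the $L_{0}$ term is locally constant in the active directions; and for truly equal active levels, moving the free oracle minimizer onto the fusion hyperplane costs an $O_{P}(1)$ quadratic likelihood increase while the $L_{0}$ gain is only $O(a_{n}^{0})=o(1)$, so with probability tending to one the unfused oracle point remains a local minimizer with the limit law $N(\bm 0,\bm I_{11}^{-1})$.
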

\begin{proof}
	see Appendix \ref{app.proof}.
\end{proof}

A consistency result concerning factor selection is discussed next. The desired method should, asymptotically, correctly detect the truly zero parameter vectors as well as the truly nonzero parameter vectors. The theorem below is motivated by the work of \cite{Bunea2008} where the focus lies on $L_{1}$ and $L_{1}+L_{2}$ penalization in linear and logistic regression, ignoring the presence of categorical covariates. Starting from the assumptions needed for the proof of $\sqrt{n}$-consistency of the estimator $\hat{\bm \beta}$, an asymptotic upper bound for the probability $\mathbb{P}(A^* \not \subseteq A_{n})$ is derived, which is a result on the consistency of factor selection of our approach. Recall that $A^*=\{j\,|\,\bm \beta^*_{j} \neq \bm 0\}$ is the active set of the truth and $A_{n}=\{j\,|\, \hat{\bm \beta}_{j}^{(n)}\neq 0\}$ is the active set of the estimate, depending on the sample size $n$.
\begin{thm}[Selection consistency for fixed $p<n$]\label{sel.cons}
Assume that the conditions of Theorem \ref{root-n-consistency} are satisfied and that the true underlying structure is sparse. Then, there exists an $L_{0}$-FGL estimator $\hat{\bm \beta}$ for which it holds that $\forall \varepsilon >0$ there exist a constant $N >0$ such that
\begin{eqnarray}\label{thm3.8}
\mathbb{P}(A^* \not \subseteq A_{n}) < \varepsilon	\,\,\,\,\forall\, n \,\geq\, N.
\end{eqnarray}
\begin{proof}
see Appendix \ref{app.proof}.
\end{proof}
\end{thm}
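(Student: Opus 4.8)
The plan is to leverage the $\sqrt{n}$-consistency established in Theorem \ref{root-n-consistency} and reduce the selection statement to a single deviation bound. Throughout, $\hat{\bm \beta}$ denotes the particular $\sqrt{n}$-consistent $L_{0}$-FGL estimator whose existence is guaranteed by Theorem \ref{root-n-consistency}; this choice matters because the $L_{0}$ penalty is non-convex and the minimizer need not be unique, so the claim must be read, as stated, as the existence of an estimator with the asserted property. The first step is to observe that the event of interest is a \emph{false-negative} event: since $A^*=\{j\,|\,\bm \beta_{j}^*\neq \bm 0\}$ and $A_{n}=\{j\,|\,\hat{\bm \beta}_{j}^{(n)}\neq \bm 0\}$, the failed inclusion $A^*\not\subseteq A_{n}$ occurs exactly when some truly active factor is shrunk to the zero subvector, i.e.
\begin{eqnarray*}
\{A^*\not\subseteq A_{n}\}=\bigcup_{j\in A^*}\{\hat{\bm \beta}_{j}^{(n)}=\bm 0\}.
\end{eqnarray*}
Because $p$ is fixed, $A^*$ is a finite set, so a union bound gives $\mathbb{P}(A^*\not\subseteq A_{n})\leq\sum_{j\in A^*}\mathbb{P}(\hat{\bm \beta}_{j}^{(n)}=\bm 0)$, and it suffices to control each summand.

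The second step bounds a single summand by a deviation of the whole estimator. For $j\in A^*$ set $\delta_{j}:=\|\bm \beta_{j}^*\|_{2}$, which is strictly positive by sparsity (Definition \ref{fisher}); finiteness of $A^*$ then guarantees $\delta:=\min_{j\in A^*}\delta_{j}>0$. On the event $\{\hat{\bm \beta}_{j}^{(n)}=\bm 0\}$ one has $\|\hat{\bm \beta}_{j}^{(n)}-\bm \beta_{j}^*\|_{2}=\|\bm \beta_{j}^*\|_{2}=\delta_{j}$, and since $\|\hat{\bm \beta}-\bm \beta^*\|_{2}^2=\sum_{k}\|\hat{\bm \beta}_{k}-\bm \beta_{k}^*\|_{2}^2\geq\delta_{j}^2$, this yields the event containment
\begin{eqnarray*}
\{\hat{\bm \beta}_{j}^{(n)}=\bm 0\}\subseteq\{\|\hat{\bm \beta}-\bm \beta^*\|_{2}\geq\delta\},
\end{eqnarray*}
so that $\mathbb{P}(A^*\not\subseteq A_{n})\leq |A^*|\,\mathbb{P}(\|\hat{\bm \beta}-\bm \beta^*\|_{2}\geq\delta)$.

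The third step closes the argument. By Theorem \ref{root-n-consistency} we have $\|\hat{\bm \beta}-\bm \beta^*\|_{2}=O_p(1/\sqrt{n})$, which in particular forces $\|\hat{\bm \beta}-\bm \beta^*\|_{2}\rightarrow_{P}0$. Unwinding the definition of $O_p$ for the fixed threshold $\delta>0$ (write the probability as $\mathbb{P}(\sqrt{n}\|\hat{\bm \beta}-\bm \beta^*\|_{2}\geq\sqrt{n}\,\delta)$ and use $\sqrt{n}\,\delta\rightarrow\infty$ against the tightness of $\sqrt{n}\|\hat{\bm \beta}-\bm \beta^*\|_{2}$), one gets $\mathbb{P}(\|\hat{\bm \beta}-\bm \beta^*\|_{2}\geq\delta)\rightarrow 0$. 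As $|A^*|$ is a fixed finite number, $\mathbb{P}(A^*\not\subseteq A_{n})\rightarrow 0$, which is precisely the claim: for any $\varepsilon>0$ choose $N$ so that the right-hand side lies below $\varepsilon$ for all $n\geq N$.

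I expect this to be essentially routine once Theorem \ref{root-n-consistency} is available, with the only genuine care being bookkeeping rather than analysis. The first delicate point is fixing \emph{which} minimizer is meant, tracking the same consistent sequence through every step, as dictated by non-convexity. The second is confirming $\delta>0$, which relies crucially on $A^*$ being finite and the signal strengths $\delta_{j}$ being fixed constants; this is exactly where the fixed-$p$ hypothesis is used, in contrast to the diverging regime of Theorem \ref{consistency-highdim}, where $\min_{j}\delta_{j}$ could degrade with $n$ and the same one-line reduction would fail. Notably, no direct control of the group lasso or $L_{0}$ penalty terms is required here, since all the delicate penalty analysis has already been absorbed into the $\sqrt{n}$-consistency result.
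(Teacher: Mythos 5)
Your proposal is correct and follows essentially the same route as the paper's proof: both reduce the event $\{A^*\not\subseteq A_{n}\}$ to the single deviation event $\{\|\hat{\bm \beta}-\bm \beta^*\|_{2}\geq \min_{l\in A^*}\|\bm \beta^*_{l}\|_{2}\}$ and then invoke the $\sqrt{n}$-consistency of Theorem \ref{root-n-consistency} (choosing $n$ large enough that $M/\sqrt{n}$ falls below the minimal signal strength). The only cosmetic difference is your union bound with the factor $|A^*|$, which the paper avoids by noting that every event $\{\hat{\bm \beta}_{j}=\bm 0\}$, $j\in A^*$, is contained in the same deviation event; since $p$ is fixed this factor is harmless, and your remarks on non-uniqueness of the minimizer and on $\delta>0$ match the paper's reading of the statement.
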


This result says that, depending on the sample size $n$, there exists an estimator for which the probability that it sets factors to zero which are not truly zero (meaning that we would delete influential factors from our model) can be made arbitrarily small which is a property that is really useful in practice, especially for two-step procedures. 

In the same way, an analogue result for selection consistency in case of a diverging number of parameters can be proved. 

\begin{thm}[Selection consistency in the diverging case $p_{n} < n$]\label{sel.cons.div}
Assume that the conditions of Theorem \ref{consistency-highdim} are satisfied and that the true underlying structure is sparse. Then, there exists an $L_{0}$-FGL estimator $\hat{\bm \beta}$ for which for $\forall \varepsilon >0$ there exists a constant $N >0$ such that (\ref{thm3.8}) holds.
\begin{proof}
see Appendix \ref{app.proof}.
\end{proof}
\end{thm}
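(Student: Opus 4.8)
The plan is to mirror the argument of Theorem \ref{sel.cons}, tracking the slower consistency rate $\alpha_{n} = \sqrt{p_{n}/n}$ supplied by Theorem \ref{consistency-highdim} in place of $1/\sqrt{n}$. The starting point is the elementary observation that $\{A^* \not\subseteq A_{n}\}$ is precisely the event that some truly active factor is estimated to be inactive: there exists $j \in A^*$ with $\bm \beta_{j}^* \neq \bm 0$ but $\hat{\bm \beta}_{j}^{(n)} = \bm 0$. On this event the full estimation error is bounded below by the signal of that block, since $\|\hat{\bm \beta}_{n} - \bm \beta^*\|_{2} \geq \|\hat{\bm \beta}_{j}^{(n)} - \bm \beta_{j}^*\|_{2} = \|\bm \beta_{j}^*\|_{2}$. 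Writing $c_{\min,n} := \min_{j \in A^*} \|\bm \beta_{j}^*\|_{2}$ for the minimal active block signal, this yields the set containment
\begin{eqnarray*}
\{A^* \not\subseteq A_{n}\} \subseteq \left\{ \|\hat{\bm \beta}_{n} - \bm \beta^*\|_{2} \geq c_{\min,n} \right\},
\end{eqnarray*}
so it suffices to control the probability on the right-hand side. Note that no union bound over the (possibly growing) active set is needed, because we reduce to the full-vector $\ell_{2}$ error, which already dominates every individual block error.

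Next I would invoke the consistency of the specific estimator $\hat{\bm \beta}_{n}$ produced by Theorem \ref{consistency-highdim}, namely $\|\hat{\bm \beta}_{n} - \bm \beta^*\|_{2} = O_{p}(\alpha_{n})$. By definition of $O_{p}$, for every $\varepsilon > 0$ there is a constant $M_{\varepsilon}$ with $\mathbb{P}(\|\hat{\bm \beta}_{n} - \bm \beta^*\|_{2} > M_{\varepsilon}\,\alpha_{n}) < \varepsilon$ for all $n$ sufficiently large. Combining this with the containment above gives
\begin{eqnarray*}
\mathbb{P}(A^* \not\subseteq A_{n}) \leq \mathbb{P}\left( \|\hat{\bm \beta}_{n} - \bm \beta^*\|_{2} \geq c_{\min,n} \right) < \varepsilon,
\end{eqnarray*}
provided $n$ is large enough that $c_{\min,n} \geq M_{\varepsilon}\,\alpha_{n}$. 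Since $\alpha_{n} \to 0$, this last inequality is guaranteed as soon as the minimal block signal dominates the estimation rate, i.e.\ $c_{\min,n}/\alpha_{n} \to \infty$; for a sparse truth whose active block norms are bounded away from zero this is immediate, and more generally it is the natural minimal-signal (beta-min) requirement that should be read off the diverging regularity conditions (div.Reg1)--(div.Reg3) of Appendix \ref{app.reg2}, together with the sparse structure of Definition \ref{fisher}.

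The main obstacle, and essentially the only genuine difference from the fixed-$p$ proof, is controlling the interplay between the growing dimension and the signal strength. In the fixed-$p$ regime of Theorem \ref{sel.cons} the active blocks are fixed nonzero vectors, so $c_{\min,n}$ is a positive constant and dominating $\alpha_{n} \sim n^{-1/2}$ is automatic. When $p_{n}$ diverges, both the number of active factors $j_{0} = j_{0}(n)$ and the individual block signals may vary with $n$, and the rate $\alpha_{n} = \sqrt{p_{n}/n}$ is slower, so the care required is to ensure that $c_{\min,n}$ does not decay faster than $\alpha_{n}$. This is exactly where the minimal-signal assumption enters, and it is the step deserving the most attention; the uniformity of the bound is not an issue, since Theorem \ref{consistency-highdim} already delivers a full-vector $\ell_{2}$ rate for a single consistent estimator, which the display above uses simultaneously for all active blocks. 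Once the signal condition is in place the chain of inequalities closes verbatim, reproducing (\ref{thm3.8}).
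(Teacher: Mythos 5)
Your proof is correct and follows essentially the same route as the paper's: the paper likewise reduces $\{A^*\not\subseteq A_{n}\}$ to the event $\bigl\{\|\hat{\bm\beta}_{n}-\bm\beta^*\|_{2}\geq \min_{l\in A^*}\|\bm\beta^*_{l}\|_{2}\bigr\}$ via the chain of inequalities from Theorem \ref{sel.cons} and then substitutes the $O_{p}(\alpha_{n})$ rate of Theorem \ref{consistency-highdim} for the $O_{p}(1/\sqrt{n})$ rate. Your explicit remark that the minimal active block norm $c_{\min,n}$ must dominate $\alpha_{n}$ (a beta-min condition) makes precise a point the paper leaves implicit behind its ``the rest works analogously'' statement, where the minimal signal is tacitly treated as bounded away from zero; this is a welcome gain in rigor but not a different argument.
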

%%%%%%%%%%%%%%%%%%%%%%%%%%%%%%%%%%%%%%%%%%%%%%%%
\section{Computational Approaches}
Two different computational approaches are considered, the PIRLS algorithm and a block coordinate descent (BCD) procedure.
\subsection{PIRLS Algorithm}\label{sectionpirls}
This approach is suitable for a broad variety of existing penalty functions as discussed in \cite{OelkerTutz2013}. 
In general, PIRLS can be applied to penalty functions of the following form
\begin{eqnarray}
P_{\lambda}^{gen}(\bm \beta)= \sum_{l=1}^L \lambda_{l} p_{l}(||\bm a_{l}^T \bm \beta||_{N_{l}}).\label{pirls1}	
\end{eqnarray}
Here, $L \in \mathbb{N}$ is the number of restrictions with corresponding tuning parameter $\lambda_{l} \geq 0, || \cdot ||_{N_{l}}$ is a semi-norm, or at least some term that makes sense to be used as a penalty, and $p_{l}: \mathbb{R}^+ \rightarrow \mathbb{R}^+$ where $p_{l}(0)=0$ holds and in addition $p_{l}$ is continuously differentiable on $\mathbb{R}^+$ with positive derivative. The vectors $\bm a_{l}^T$ transform the coefficient vector $\bm \beta$, for example in the case of fusion penalties for ordinal data this vector will form the differences of adjacent coefficients (if we have nominal data we will form all pairwise differences). Most of the time, as explained in \cite{OelkerTutz2013}, the penalties are of the form
\begin{eqnarray*}
P_{\lambda}^{gen}(\bm \beta)=	\sum_{j=1}^J \sum_{l=1}^{L_{j}} \lambda_{jl}p_{jl}(||\bm a_{jl}^T \bm \beta_{j}||_{N_{l}})
\end{eqnarray*}
meaning that we penalize each covariate $j \in \{1,...,J\}$ seperately. Keeping this in mind, we will continue to use the more compact way of writing (\ref{pirls1}) where then the parameter $L$ combines both the different number of restrictions and the fact that we may penalize each covariate seperately. Note that here, $p_{l}(\cdot)$ or $p_{jl}(\cdot)$ respectively are functions and do not denote the number of categories of factor $j$ which we also denoted by $p_{j}$.
Further, \cite{OelkerTutz2013} gave an extension to penalties with vector-valued arguments like the group lasso penalty which is also interesting in the case of $L_{0}$-FGL including a group lasso part. Because of the fact that $L_{0}$-FGL has two tuning parameters, we will get two penalty terms in the PIRLS algorithm. In particular, we make the following choices
\begin{eqnarray}
\text{group lasso part:}\,\,\, && p_{l}(\zeta)=\sqrt{p_{l}}\cdot \zeta,\,\, \bm R_{l}\bm \beta= \bm \beta_{j,} \\
\text{$L_{0}$ part:}	\,\,\, && p_{l}(\zeta)= w_{0}^{(j,km)} \zeta ,\,\, \bm a_{l}^T \bm \beta = \beta_{j,k}-\beta_{j,m}\,\, \text{for} \, 0 \leq k < m \leq p_{j}. \label{casL0_1}
\end{eqnarray}
The vectors $\bm a_{l}^T$ are responsible for picking the possible differences corresponding to factor $j$. The entries of these vectors are contained in the set $\{-1,0,1\}$. For the approximations of the "norms", following \cite{OelkerTutz2013}, we have
\begin{eqnarray}
\text{$L_{0}$ Norm:}\,\,\, && N_{l}(\xi)=\frac{2}{1+ \exp(- \gamma |\xi|)}-1\,,\label{casL0_2}\\	
&& D_{l}(\xi)= \frac{2 \gamma}{1+ \exp(-\gamma|\xi|)}\left(1- \frac{1}{1+\exp(-\gamma |\xi|)}\right) \frac{\xi}{\sqrt{\xi^2 +c}}\label{casL0_3}\\
\text{group lasso $||\bm \xi||_{2}$:} && N_{l}(\bm \xi)=(\bm \xi^T \bm \xi +c)^{1/2}.
\end{eqnarray}

\subsubsection*{Coefficient Paths (PIRLS)}
Next we will compare coefficient paths for CAS-$L_{0}$, group lasso and our new approach $L_{0}$-FGL, all computed with the use of the PIRLS algorithm. Consider $J=2$ categorical covariates where $X_{1}$ has $4$ and $X_{2}$ has $3$ categories with equal probabilities. All the covariates are sampled from a multinomial distribution. The true coefficient vector is chosen to be $\bm \beta^*=(2,1.2,1,0.5,-0.8,-0.5)$. We used the $\texttt{simulation}$ function from the package $\texttt{gvcm.cat}$ to simulate our dataset. For the tuning parameters we made the following choices: for $L_{0}$-FGL we chose $\lambda_{max,1}=10$ for the group lasso part and $\lambda_{max,0}=20$ for the $L_{0}$ part. Furthermore, for the CAS-$L_{0}$ estimator we used $\lambda_{max}=10$ and for the group lasso estimator we used $\lambda_{max}=15$. Because of the different maximum tuning values and the fact that $L_{0}$-FGL needs two tuning parameters, the points where the fusion/selection occurs are not comparable among the methods. Further, we chose the unpenalized MLE as starting values. 

The resulting coefficient paths for the chosen approaches are provided in Figure \ref{coefpaths_path_I.1}. We can directly verify that $L_0$-FGL (middle) is an intersection between CAS-$L_{0}$ (right) and the group lasso (left). The huge advantage of $L_0$-FGL compared to the other two is that it combines the ability of factor selection and fusion of categories while the group lasso itself executes factor selection and CAS-$L_{0}$ fusion of categories. Even if the CAS-$L_{0}$ approach is able to select factors since we include reference category zero, we can not be sure of its factor selection performance, the corresponding paths are not smooth. Consequently, the approach of using $L_{0}$-FGL seems to be an advantageous tool that combines both worlds.
\begin{center}
\begin{figure}
\includegraphics[width=\textwidth]{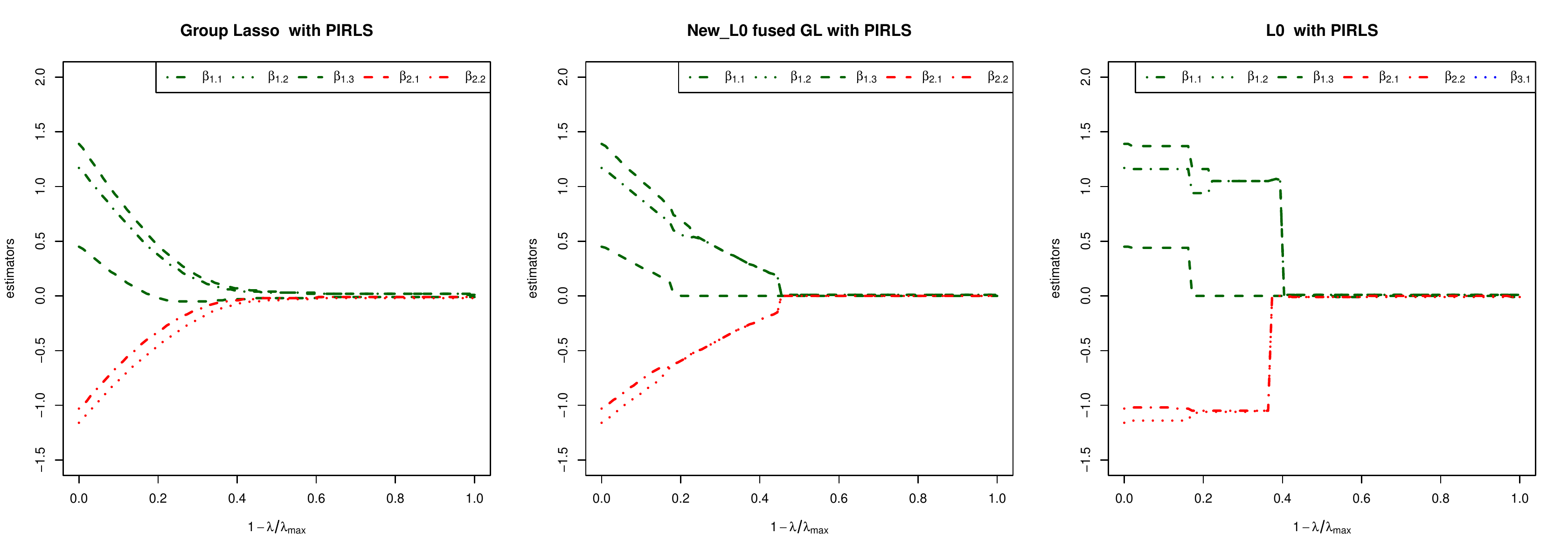}	
\caption{Coefficient paths of two factors with $4$ and $3$ levels, respectively, for group lasso (left), $L_{0}$-FGL (middle) and CAS-$L_{0}$ (right). All methods are computed with PIRLS}
\label{coefpaths_path_I.1}
\end{figure}
\end{center}
%%%%
\subsection{Block Coordinate Descent}\label{bcd.l0fgl}
A block coordinate descent (BCD) approach with a quasi Newton step for obtaining the estimates is developed as well. The idea is to cycle through the covariates, minimizing with respect to (wrt) one covariate at a time while keeping the others fixed, as for example done in \cite{MeierEtAl2008} and \cite{BrehenyHuang2015}. We start with an approximation of the objective function where the same function as in PIRLS is used for the $L_{0}$ part, whereas the group lasso part is added without approximating it. Notice that an approximation of the group lasso part is not needed, since the  is convex, while in PIRLS the whole penalty is approximated. Details of the approximation of the first part can be found in Appendix \ref{app.approx}.
As explained there, the approximation $g(\bm \beta_{j}, \bm \beta^{(k)})$ is used for the log-likelihood and $L_0$ part of our penalty while the group lasso part is added afterwards.  
The resulting approximation of the $L_0$-FGL penalty function is denoted by 
\begin{eqnarray*}
	\tilde{g}(\bm \beta_{j}, \bm \beta^{(k)}):=g(\bm \beta_{j}, \bm \beta^{(k)}) + \lambda_{1}\sqrt{p_{j}}||\bm \beta_{j}||_{2},\label{approx.gtilde}
\end{eqnarray*}
where $g(\bm \beta_{j}, \bm \beta^{(k)})$ is given by (\ref{defg}).
Function $\tilde{g}(\bm \beta_{j}, \bm \beta^{(k)})$ is minimized wrt $\bm \beta_{j}$ while the rest $\bm \beta_{i},\,i \neq j $ remain fixed. This works because of the separability property of the penalty function which ensures that building the derivative of $\tilde{g}$ wrt $\bm \beta_{j}$ makes the other terms depending on $\bm \beta_{i}$, for $i \neq j$, vanish. The BCD quasi Newton algorithm for $L_{0}$-FGL is thus obtained, as described in Algorithm \ref{algo3_wn} below. 

%%%%
\begin{algo}[Block Coordinate Gradient descent for $L_{0}$-FGL with Quasi Newton]\hspace{0cm} \label{algo3_wn}\\
\textbf{(1)} Set start value $\bm \beta^{(0)} =0$ if not specified otherwise. Set $k=1$\\
\textbf{(2)} While $|\bm \beta^{(k)}-\bm \beta^{(k-1)}| > \varepsilon$ and $k \leq \text{maxsteps}$\\
\hspace*{1cm} \textbf{(2.1)} Update approximation of $M_{pen}^{CAS-L_{0}}$ including updates of $\bm A_{\lambda}, \widetilde{\bm W}, \tilde{\bm y}$ as they  \\
\hspace*{2cm} depend on the value of the coefficient of the current iteration $\bm \beta^{(k)}$ which gives \\
\hspace*{2cm} the approximation $g(\bm \beta, \bm \beta^{(k)})$. \\
\hspace*{2cm} \textbf{(2.1a)} for $j=1,...,J$\\
\hspace*{4cm} set $\tilde{g}(\bm \beta_{j} , \bm \beta^{(k)}):= g(\bm \beta_{j} , \bm \beta^{(k)})+ \lambda_{1}\sqrt{p_{j} }||\bm \beta_{j} ||_{2}$. \\
\hspace*{4cm} use quasi Newton to obtain $\bm \beta_{j}^{(k+1)}= \argmin_{\bm \beta_{j}} \tilde{g}(\bm \beta_{j}, \bm \beta^{(k)})$ \\
\hspace*{4cm} set $\bm \beta_{j}^{(k+1)}= \bm \beta_{j}^{(t)}$  \hfill (result quasi Newton)\\
\hspace*{2cm} \textbf{(2.1b)} Set $\bm \beta^{(k+1)}=(\bm \beta_{1}^{(k+1)},...,\bm \beta_{j}^{(k+1)}, \bm \beta_{j+1}^{(k)},...,\bm \beta_J^{(k)})$ \\
\hspace*{3.3cm} Set $k=k+1$ \\
\textbf{(3)} Finally, set $\hat{\bm \beta}^{L_{0}-FGL}= \bm \beta^{(k+1)}$.
\end{algo}	

For the execution of the quasi Newton part of this algorithm in our applications we used the function \texttt{optim()} in \texttt{R}.

\subsubsection*{Coefficient Paths (BCD)}
Now we will show resulting coefficient paths for group lasso, CAS-$L_{0}$ and $L_{0}$-FGL where all are computed with the BCD quasi Newton procedure. Assume we have $J=2$ covariates with $p_{1}=p_{2}=3$, hence $4$ levels each, drawn from multinomial distribution with equal probabilities. The true parameter vector was chosen to be given by $\bm \beta^*=(-0.5,2,-1,2,-0.5,-1,1)$. In Figure \ref{coefpaths_path_I.2_Version1} the resulting coefficient paths are displayed. 
\begin{center}
\begin{figure}[h!]
\includegraphics[width=\textwidth]{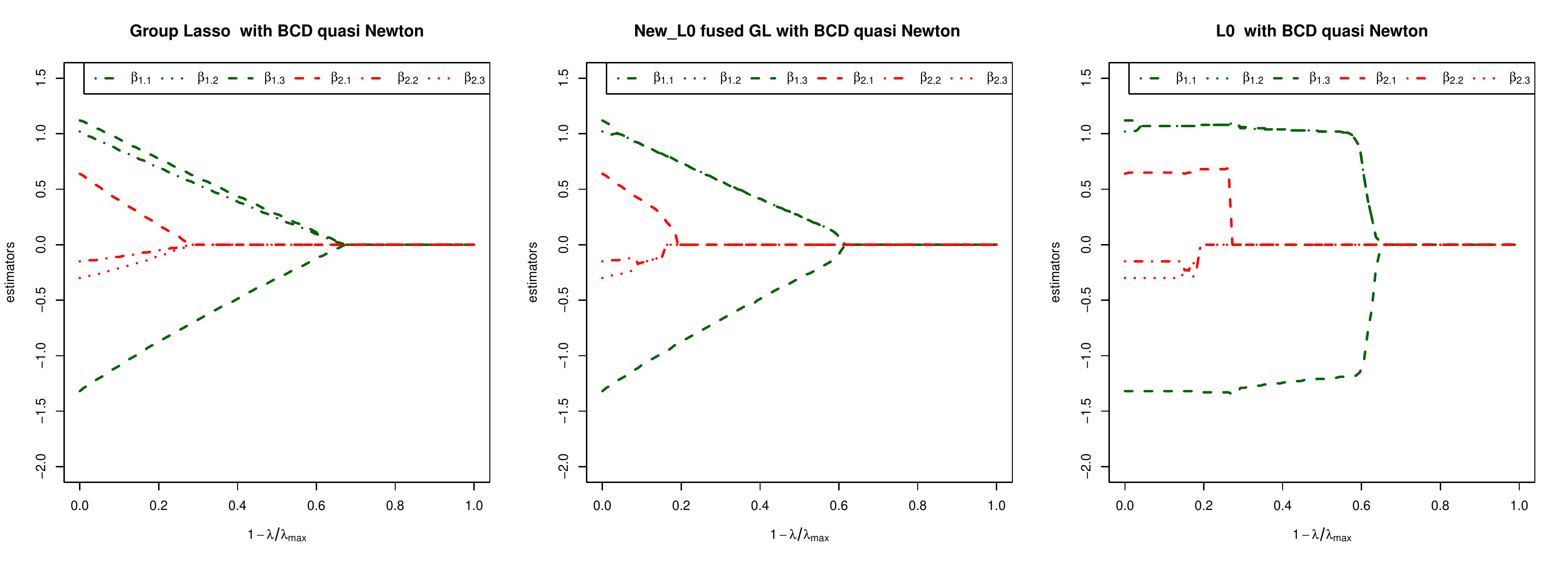}	
\caption{Coefficient paths of two factors with $4$ levels each, for group lasso (left), $L_{0}$-FGL (middle) and CAS-$L_{0}$ (right). All methods are computed with  BCD}
\label{coefpaths_path_I.2_Version1}
\end{figure}
\end{center}
We can see that $L_{0}$-FGL connects the ability of group lasso to select variables and of CAS-$L_{0}$ to fuse coefficients when they are close enough to each other, analogously to the coefficient paths using PIRLS (Section \ref{sectionpirls}). In fact we can see that using BCD and quasi Newton, the paths of group lasso (left) in Figure \ref{coefpaths_path_I.2_Version1} look less smooth than those in Figure \ref{coefpaths_path_I.1} where we used PIRLS. This is caused by the fact that PIRLS uses a quadratic approximation of the penalty function while the BCD quasi Newton approach does not use such an approximation for the group lasso part. To conclude, the method of using a BCD approach with quasi Newton looks promising and its performance will be investigated more detailed in the following simulation studies.

\section{Simulation Studies}
$L_{0}$-FGL procedures, computed with the presented algorithms, are compared with respect to their computational performance in practice for a representative selection of designs. The following methods in both their versions, adaptive and non-adaptive, are included in our comparison.
\begin{enumerate}
	\item[(i)] CAS-$L_{0}$ estimator, computed with PIRLS and the package \texttt{gvcm.cat} 
	\item[(ii)] $L_{0}$-FGL (PIRLS), for which the abbreviation (adaptive) $L_{0}$-FGL PIRLS is used
	\item[(iii)] $L_{0}$-FGL (BCD) and quasi newton, which is referred to as (adptive) $L_{0}$-FGL BCD 
\end{enumerate}

\subsection{Choice of the Weights }\label{section.weights}
Depending on whether the approach under consideration is non-adaptive or adaptive, the corresponding type of weights is used.

\subsubsection*{Non-Adaptive Weights}
Weights can be chosen in the group lasso and the $L_{0}$ part. As already explained, a common choice for the group lasso part is to set $\bm K_{j}$ in such a way that $w_{1}^{(j)}=\sqrt{p_{j}}$. The weights used for the $L_{0}$ fusion part should account for the number of observations per level, and are chosen along the lines of \cite{GertheissTutz2010_2}. Let $n_{j}^{(r)}$ denote the number of observations of level $r$ of the $j$-th covariate, $j\in\{1,\ldots,J\}$. We have to distinguish between the cases of a nominal or ordinal factor, since in the latter one only adjacent categories have to be compared. To sum up, our choice for non-adaptive weights is
\begin{enumerate}
	\item[(i)]  $L_{0}$-part: \\
	$\bullet$ nominal \,\,\ $\displaystyle w_{0}^{(j,rs)}=2(p_{j}+1)^{-1}\sqrt{\frac{n_{j}^{(r)}+n_{j}^{(s)}}{n}}$ , \ \
	$\bullet$ ordinal \,\,\ $\displaystyle w_{0}^{(j,r)}=\sqrt{\frac{n_{j}^{(r)}+n_{j}^{(r-1)}}{n}}$
	\item[(ii)] GL-part: \,\,\ $\displaystyle w_{1}^{(j)}=\sqrt{p_{j}}$
\end{enumerate}
Note that, even for $p>n$, if we assume that for every $j$ the number of levels $p_{j}$ is bounded and in addition $n_{j}^{(r)}/n \rightarrow c_{j}^{(r)} \in (0,1)$ for all $j\in\{1,\ldots,J\}$ and $r\in\{0, \ldots, p_{j}\}$, see \cite{GertheissTutz2010_2}, we can ensure that the weights for the $L_{0}$ part converge to a positive constant.

\subsubsection*{Adaptive Weights}
As for other penalties, we can also use adaptive weights to obtain the adaptive $L_{0}$-FGL method. This is done by choosing in the group lasso part the weights $w_{1}^{(j)}=||\tilde{\bm \beta}_{j}||_{2}^{-1}$. A more general choice is $w_{1}^{(j)}=||\tilde{\bm \beta}_{j}||_{2}^{-\gamma}$ for some chosen $\gamma >0$ as in \cite{Zou2006}, where the oracle properties for this more general choice are proved. To keep the adaptive weights on a comparable scale for the group lasso and the $L_{0}$ part, we multiply the inverse of the norm of the ML estimate with the non-adaptive weight $\sqrt{p_{j}}$. In the $L_{0}$ part we multiply the weights chosen above with the inverse of the difference of the corresponding ML estimates $\tilde{\bm \beta}$, hence we multiply with $|\tilde{\beta}_{j,r}-\tilde{\beta}_{j,s}|^{-1}$. Note that we take here the absolute value of the differences of the ML estimates as in \cite{BondellReich2009}.  Thus, we propose the following adaptive weights
\begin{enumerate}
	\item[(i)]  $L_{0}$-part 
	\begin{itemize}
		\item adaptive nominal \,\,\  $\displaystyle w_{0}^{(j,rs)}=\frac{1}{|\tilde{\beta}_{j,r}-\tilde{\beta}_{j,s}|}\cdot 2(p_{j}+1)^{-1}\sqrt{\frac{n_{j}^{(r)}+n_{j}^{(s)}}{n}}$
		\item adaptive ordinal \,\,\ $\displaystyle  w_{0}^{(j,r)}=\frac{1}{|\tilde{\beta}_{j,r}-\tilde{\beta}_{j,r-1}|}\cdot \sqrt{\frac{n_{j}^{(r)}+n_{j}^{(r-1)}}{n}}$
	\end{itemize}
	\item[(ii)] GL-part adaptive  \,\,\ $\displaystyle w_{1}^{(j)}=\sqrt{p_{j}}||\tilde{\bm \beta}_{j}||_{2}^{-1}$
\end{enumerate}

\subsection{Goodness of Fit Measures}
The approaches under investigation will be compared w.r.t. the following measures
\begin{enumerate}
	\item[(i)] mean squared error coefficients $MSEC(\hat{\bm \beta})= \frac{1}{p} \sum_{j=1}^{p}(\beta_{j}^*-\hat{\beta}_{j})^2$
		\item[(ii)] predictive deviance  $Dev(\bm y, \hat{\bm \mu})= -2 \sum_{i=1}^n \{ y_{i} \log(\hat{\mu}_{i})+(1-y_{i})\log(1-\hat{\mu}_{i})\}$
	\item[(iii)] false positive (FP)/ false negative (FN) rates factor selection 
\begin{eqnarray}
	FP_{s,\text{fac}}(\bm \hat{\bm \beta})= \frac{|\{j \in \{1,...,J\}\,\, : \,\, ||\hat{\bm \beta}_{j}|| \neq 0 \,,\, ||\bm {\beta}^*_{j}||=0\}|}{|\{j \in \{1,...,J\}\,\, :\,\, ||\bm {\beta}_{j}^*|| = 0\}|} \\
 FN_{s,\text{fac}}(\bm \hat{\bm \beta})= \frac{|\{j \in \{1,...,J\}\,\, : \,\, ||\hat{\bm {\beta}}_{j}|| = 0 \,,\, ||\bm{\beta}^*_{j}|| \neq 0\}|}{|\{j \in \{1,...,J\}\,\, :\,\, ||\bm{\beta}^*_{j}|| \neq 0\}|}	
	\end{eqnarray}
	\item[(iv)] FP/FN rates fusion, limited to truly influential factors to ensure that just fusion and no selection is measured
	\begin{eqnarray}
		FP_{f,\text{infl.truth}}= \frac{|\{(j,k,l)\,:\, \hat{\beta}_{j,k} \neq \hat{\beta}_{j,l}, \beta^*_{j,k} = \beta^*_{j,l}\,,\, \left(\sum_{r}|\beta^*_{j,r}|\right ) \neq 0\}|}{|\{(j,k,l): \beta^*_{j,k} = \beta^*_{j,l} \,,\, \left(\sum_{r}|\beta^*_{j,r}|\right ) \neq 0 \}|} \\
  FN_{f,\text{infl.truth}}=\frac{|\{(j,k,l)\,:\, \hat{\beta}_{j,k}= \hat{\beta}_{j,l}, \beta^*_{j,k} \neq \beta^*_{j,l}\,,\, \left(\sum_{r}|\beta^*_{j,r}|\right ) \neq 0\}|}{|\{(j,k,l): \beta^*_{j,k} \neq \beta^*_{j,l} \,,\, \left(\sum_{r}|\beta^*_{j,r}|\right ) \neq 0 \}|}
	\end{eqnarray}
For ordinal factors we compare the adjacent indices $(j,k,k-1)$.
\item[(v)] practical sparsity $|\{j \in \{1,..,J\}: ||\bm \beta_{j}||_{2} \neq 0\}|$ and overall sparsity $|\{ k \in \{1,...,p\}: \beta_{j} \neq 0\}|$
\end{enumerate}

\subsection{Simulation Designs}

To investigate the performance of the approaches discussed above, a design of low and one of high dimension are considered, as described in detail next.
\subsubsection*{Design B8} 
This design is taken from \cite{OelkerEtAl2014}, where the sample size was $n=400$ while we consider $n=1000$. We have 8 ordinal covariates with 4 levels each. Here, 4 covariates are influential and 4 are non-influential. The probabilities for sampling the data where randomly sampled between $0.12$ and $0.44$. The true coefficient vector was chosen to be 
\begin{eqnarray*}
	\bm \beta^*=(2,\,\,\,0,-0.8,-0.8,\,\,\,1,1,0,\,\,\,0.4,0.6,0.8,\,\,\,-0.7,-1,0,\,\,\,0,0,0,\,\,\,0,0,0,\,\,\,0,0,0\,\,\,,0,0,0)^T.
\end{eqnarray*}
The true overall sparsity is $OS^*=9$ and the practical sparsity $PS^*=4$. Hence $50\%$ of the explanatory variables are not influential. 
\subsubsection*{Design highdim}
In this high dimensional design, we observe $60$ ordinal covariates where the first 50 have $4$ categories and the last 10 have $3$ categories each. We draw them from a multinomial distribution with equal probabilities. We chose that just the first five factors are influential, hence just approximately $8\%$ of the covariates have influence on the response. We have $p=171 > n = 100$. In particular, the true coefficient vector was chosen to be
\begin{eqnarray*}
	\bm \beta^*=(2,-1,0.5,2,1.5,1.5,0.5,1,2,2.5,-0.5,-0.3,0.5,2,1,3,0,...,0)^T.
\end{eqnarray*}
The true overall an practical sparsity is given by $OS^*=15$ and $PS^*=5$.

\subsection{Analysis of the Results}

For details on tuning, see Appendix \ref{app.sim}.

\begin{figure}
\begin{center}
	\includegraphics[width=\textwidth]{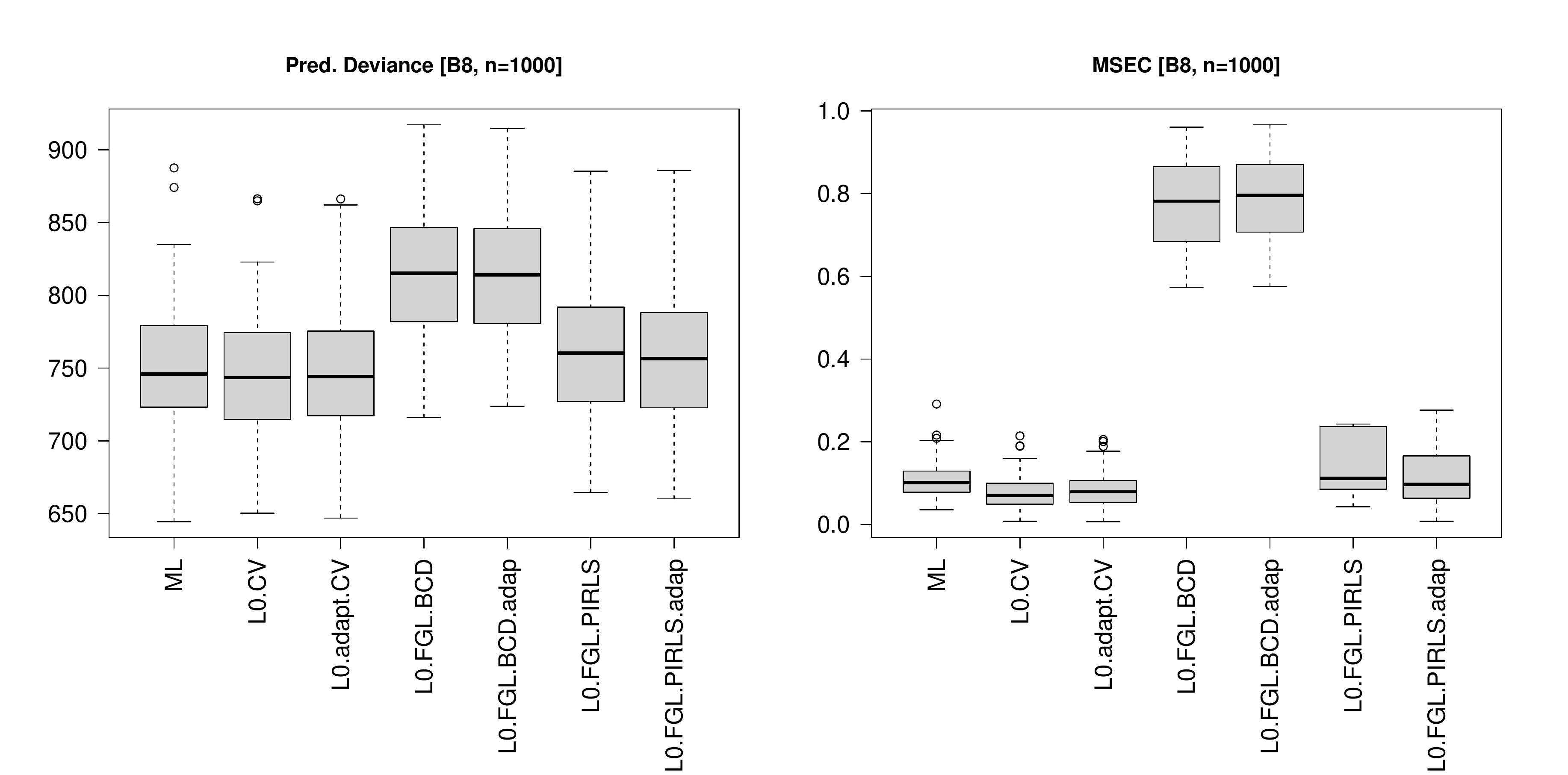}
	\caption{Predictive deviance and MSEC results for design B8 $(n=1000)$}
	\label{B8.Tutz.errorplot.run1}
\end{center}
\end{figure}
Turning our view to the error plots in Figure \ref{B8.Tutz.errorplot.run1}, we can see that in terms of error in the coefficient estimates (MSEC), the approach using $L_{0}$-FGL computed with BCD and quasi Newton shows the worst performance. The other approaches can be allocated on a similar level. Using $n=400$, the resulting errorplots look similar to this one, even though it is not as clear as it is here for a lower sample size.
% latex table generated in R 4.0.1 by xtable 1.8-4 package
% Wed Jul 13 12:00:12 2022
\begin{table}[h!]
\centering
\small
\begin{tabular}{|r|r|r|r|r|r|r|r|}
  \hline
 & ML & L0.CV & L0.adapt.CV & L0.FGL.& L0.FGL& L0.FGL& L0.FGL. \\ 
  & &  &   & BCD & BCD.adap & PIRLS & PIRLS.adap \\ 
  \hline
  $FP_{s,\text{fac}}$ & 1.00 & 0.62 & 0.45   & 1.00 & 1.00 & 0.40 & 0.18 \\ 
  $FP_{s,\text{fac}}$  & 0.00 & 0.01 & 0.03  & 0.03 & 0.03 & 0.30 & 0.25 \\ 
  $FP_{f,\text{infl.truth}}$ & 1.00 & 0.32 & 0.23   & 0.58 & 0.63 & 0.64 & 0.74 \\ 
  $FN_{f,\text{infl.truth}}$& 0.00 & 0.21 & 0.26   & 0.34 & 0.31 & 0.33 & 0.28 \\ 
   \hline
\end{tabular}
\caption{[B8, $n=1000$] FP/FN rates clustering and selection} 
\label{B8Tutz_run1_n1000_rates}
\end{table}

In terms of FP/FN rates, results are summarized in Table \ref{B8Tutz_run1_n1000_rates}. Focusing on FP/FN factor selection rates we see that the new approach $L_{0}$-FGL (adaptive version, computed with PIRLS) shows an impressively low FP factor selection rate. Of course, this has as a consequence that the corresponding FN rate is higher than for the other approaches. 

Focusing on factor selection, especially in a sparse design, the $L_{0}$-FGL approach shows a really satisfactory performance. The selection performance of $L_{0}$-FGL computed with BCD is not bringing a considerable profit compared to standard ML estimation. Turning our view to FP/FN rates of fusion, we would probably prefer the $L_{0}$ approach compared to $L_{0}$-FGL. In terms of overall and practical sparsity, $L_{0}$-FGL PIRLS outperforms the others, especially the adaptive version.  For $n=400$ we recognized a similar pattern. It is clearly the nearest to the true values while $L_{0}$-FGL BCD selects a less sparse model.
% latex table generated in R 4.0.1 by xtable 1.8-4 package
% Wed Jul 13 12:00:12 2022
\begin{table}[h!]
\centering
\begin{tabular}{|r|r|r|r|r|r|r|r|}
  \hline
 & ML & L0.CV & L0.adapt.CV &  L0.FGL.& L0.FGL& L0.FGL& L0.FGL. \\ 
  & &  &   & BCD & BCD.adap & PIRLS & PIRLS.adap \\ 
  \hline
OS & 24.00 & 16.11 & 13.93 &   23.66 & 23.67 & 12.16 & 10.61 \\ 
  PS & 8.00 & 6.46 & 5.68 &   7.89 & 7.90 & 4.40 & 3.73 \\ 
   \hline
\end{tabular}
\caption{[B8, $n=1000$] Overall/Practical Sparsity ($OS^*=9, PS^*=4$)} 
\end{table}

To sum up, we can see that $L_{0}$-FGL (PIRLS) results in the most sparse model. Hence it seems to improve the selection performance of the known $L_{0}$ (PIRLS) approach. Finally, $L_{0}$-FGL (BCD) performs worst in this design and seems not to be preferable in this particular setting. But, as we will see, there are designs where this approach outperforms the others.

Next, we analyze the behavior of our approaches in a high dimensional design, which is the most relevant design, since the proposed methods are tailored for reducing the complexity and dimensions in high dimensional designs where the number of predictors exceed the number of sample size 
($p > n$).
 In this design, it is also important to investigate the proportion of replications where the methods fail to yield an estimator. The proportion of fails of all $R=1000$ replications are displayed in table \ref{highdim_run1_fails}. Since $L_{0}$ with PIRLS and both versions 
(adaptive/non-adaptive) of $L_{0}$-FGL with PIRLS fail in all replications, we neglect these approaches in our analysis. Further, notice that also for adaptive $L_{0}$ with PIRLS,  these approaches fail in the majority of replications. Thus, the results have to be interpreted with caution, since the measurements are based on less replications. The only approach that never failed in any of these replications is $L_{0}$-FGL with BCD. The corresponding adaptive version fails in 30\% of the replications which can be explained by the fact that it uses the ML estimate which can cause problems especially in the high dimensional setting. 
% latex table generated in R 4.0.1 by xtable 1.8-4 package
% Wed Jul 13 09:30:49 2022
\begin{table}[h!]
\centering
\begin{tabular}{|r|r|}
  \hline
 & Proportion of fails \\ 
  \hline
ML & 0.16 \\ 
  L0.CV & 1.00 \\ 
  L0.adapt.CV & 0.87 \\ 
    L0.FGL.PIRLS & 1.00 \\ 
  L0.FGL.BCD & 0.00 \\ 
  L0.FGL.PIRLS.adap & 1.00 \\ 
  L0.FGL.BCD.adap & 0.30 \\ 
   \hline
\end{tabular}
\caption{[highdim] Proportion of fails} 
\label{highdim_run1_fails}
\end{table}

Figure \ref{highdim.errorplot.run1} shows the predictive deviance and MSEC for the different approaches. We can see that $L_{0}$-FGL with BCD shows a lower variability in the MSEC from which we can conclude that it seems to be less sensitive in changes in the data. The same conclusions are derived when observing the predictive deviance. Hence, based on these measurements, for this experimental setup we suggest $L_{0}$-FGL with BCD.
\begin{figure}
\begin{center}
	\includegraphics[width=.8\textwidth]{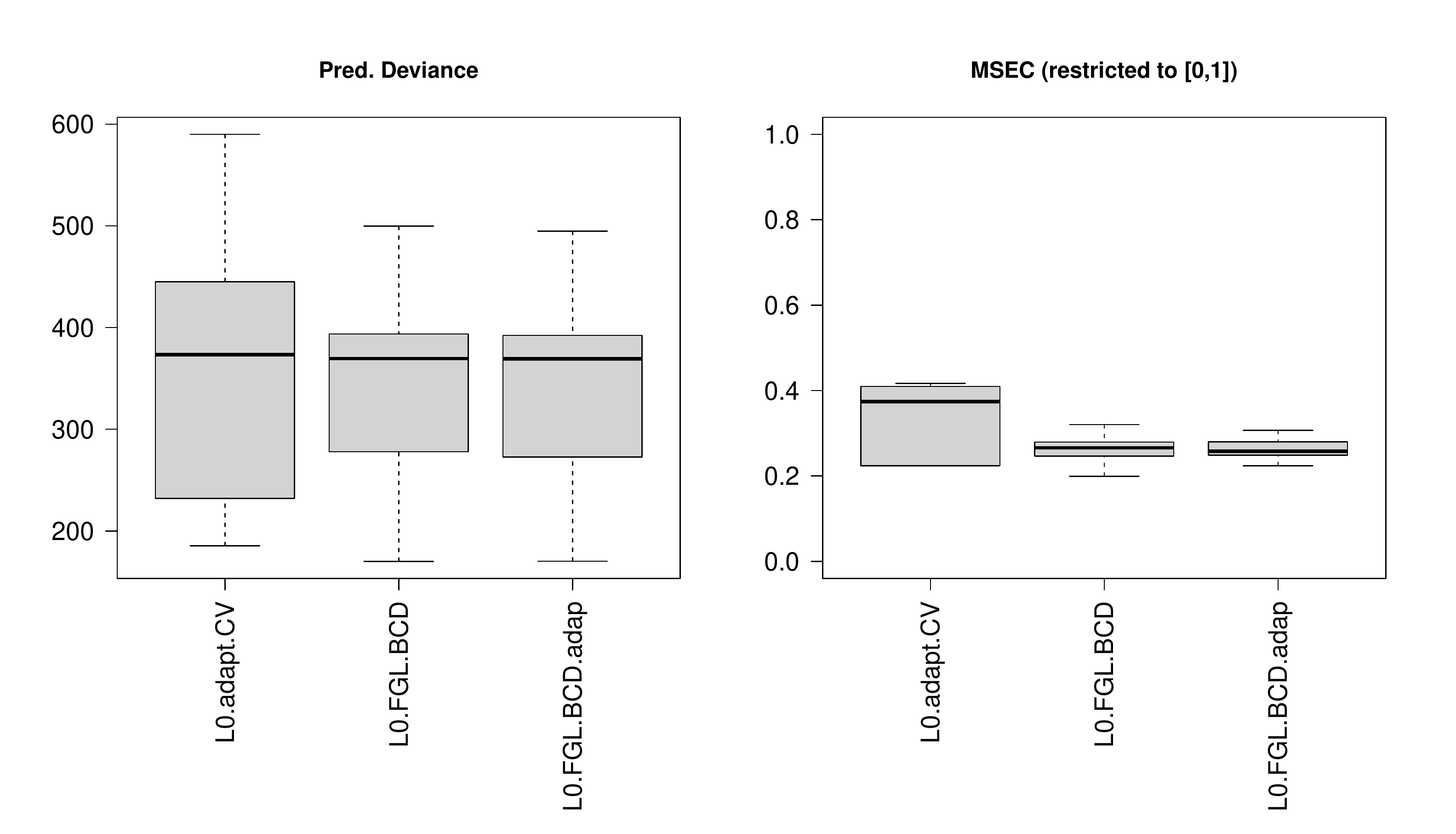}
	\caption{Predictive deviance and MSEC results for design highdim}
	\label{highdim.errorplot.run1}
\end{center}
\end{figure}
% latex table generated in R 4.0.1 by xtable 1.8-4 package
% Wed Jul 13 09:25:06 2022
\begin{table}[h!]
\centering
\begin{tabular}{|r|r|r|r|r|r|r|r|r|r|}
  \hline
 & ML & L0.CV & L0.adapt.CV & L0.FGL.& L0.FGL& L0.FGL& L0.FGL. \\ 
  &  &  &   & BCD & BCD.adap & PIRLS & PIRLS.adap \\ 
  \hline
OS & 170.00 & - & 15.46   & 60.00 & 66.26 & -  & - \\ 
  PS & 60.00 & - & 10.00  & 24.93 & 27.01 & - & - \\ 
   \hline
\end{tabular}
\caption{[highdim] Overall/Practical Sparsity ($OS^*=15, PS^*=5$)} 
\label{highdim_run1_sparsity}
\end{table}

With respect to overall and practical sparsity (Table \ref{highdim_run1_sparsity}) we observe that $L_{0}$ adaptive selects the most sparse model, but since this approach fails in $87\%$ of the replications, this outcome is not to be trusted. But, we can see that $L_{0}$-FGL BCD (adaptive and non adaptive), which do not fail in the majority of replications, clearly reduce the number of predictors included in the model. Since it is not just important that predictors are excluded from the model but also that the truly non influential ones are excluded, we turn our view to table \ref{highdim_run1_rates}.
% latex table generated in R 4.0.1 by xtable 1.8-4 package
% Wed Jul 13 09:20:26 2022
\begin{table}[h!]
\centering
\small
\begin{tabular}{|r|r|r|r|r|r|r|r|}
  \hline
 & ML & L0.CV & L0.adapt.CV &  L0.FGL.& L0.FGL& L0.FGL& L0.FGL. \\ 
  & &  &  & BCD & BCD.adap & PIRLS & PIRLS.adap \\ 
  \hline
  $FP_{s,fac}$ & 1.00 & - & 0.17  & 0.41 & 0.45 & - & - \\ 
  $FN_{s,factor}$ & 0.00 & - & 0.83   & 0.50 & 0.50 & - & - \\ 
  $FP_{f,infl.truth}$ & 1.00 & - & 0.08   & 0.23 & 0.27 & - & - \\ 
  $FN_{f,infl.truth}$ & 0.00 &-  & 0.91  & 0.71 & 0.70 & - & - \\ 
   \hline
\end{tabular}
\caption{[highdim] FP/FN rates clustering and selection} 
\label{highdim_run1_rates}
\end{table}

It gets clear that $L_{0}$-FGL with BCD (adaptive and non adaptive) sets approximately 40\% of the truly zero coefficients as nonzero and approximately 50\% of the truly nonzero coefficients are excluded from the model. 
Compared to these results, the low FP factor selection rates of adaptive $L_{0}$ with PIRLS but the high corresponding FN rates are not that satisfactory since, compared with the results on OS and PS, it seems to be the case that this approach sets too many coefficients to zero, so also the few influential ones. But, as already mentioned, the results for the approaches with a high number of fails in the replications, should be carefully interpreted.  

To sum up, the introduced $L_{0}$-FGL procedure, computed with the BCD approach using quasi Newton, is very convenient for such a high dimensional design since it highly reduces the complexity of the problem. It is remarkable that it does not fail in any of the replications even if the number of predictors highly exceeds the sample size.

\section{Conclusion}
In this work, a new approach is introduced, $L_{0}$-FGL,  which performs both, factor selection and levels fusion of categorical predictors, combining two penalty terms, one for selection and one for fusion. Having proven the existence, it is further shown that, under certain regularity conditions, $L_{0}$-FGL satisfies $\sqrt{n}$ consistency, even when the number of parameters grows with the sample size. 
In addition, $L_{0}$-FGL satisfies a result concerning consistency in variable selection in both cases, for fixed or sample size dependent number of parameters. Fixing $p<n$, there exists an adaptive $L_{0}$-FGL estimator satisfying asymptotic normality. 
These properties build a theoretical basis that makes $L_{0}$-FGL an attractive tool.  Simulation studies verified that the new $L_{0}$-FGL approach implemented with PIRLS shows a superior performance in lower dimensional designs and tends to improve the selection performance of the classical $L_{0}$ (PIRLS) method due to the incorperation of the group lasso part. The performance of $L_{0}$-FGL computed with BCD in high dimensions outperformed  the other approaches in the grand majority of replications. It is capable of identifying sparse models and reduces further the dimension of the problem through possible levels' fusion of categorical predictors, delivering thus sound interpretations for the associated effects on the response variable. The theoretical properties along with the simulation results make $L_{0}$-FGL a promising method for modeling high dimensional data with categorical covariates, where sparsity is achieved not only through variable selection but also through levels' fusion. Approaches for goodness of fit testing for models estimated by $L_0$-FGL need to be developed and are currently under research.

\pagestyle{empty}
\bibliography{PaperDraft_L0fusedGL_final.bib}
\clearpage

%%%%%%%%%%%%%%%%%%%%%%%%%%%%%%%%%%%%%%%%%%%%%%%%%%%%%%%%
%%%%%%%%%%%%%%%%%%%%%% APPENDIX %%%%%%%%%%%%%%%%%%%%%%%%
%%%%%%%%%%%%%%%%%%%%%%%%%%%%%%%%%%%%%%%%%%%%%%%%%%%%%%%%

\pagestyle{plain}
\appendix
\appendixpage
\setcounter{page}{1}

\section{Proofs}\label{app.proof}

\subsection{Regularity conditions (fixed case)}\label{app.reg1}
\begin{enumerate}
\item[(Reg1)] The density can be written as $f(\bm v,\beta)=h(y) \exp(y \bm x\bm \beta - \varphi(\bm x\bm \beta))$ for an observation $\bm v =(y,\bm x)\in \mathbb{R}^{p+2}$. \\ For logistic regression we get $h(y)=$ $n \choose y$, $\varphi(\bm\eta)=n \ln (1+\exp(\bm\eta))$.
\item[(Reg2)] The fisher information matrix $\bm I_{F}(\bm \beta)$ is finite and positive definite in $\bm \beta=\bm \beta^*$ where it holds using (Reg1) that $\bm I_{F}(\bm \beta^*)= \mathbb{E}(\varphi''(\bm x \bm \beta^*)\bm x \bm x^T)$
\item[(Reg3)] There exists an open set $\mathcal{O}$ (depending on $n$) where $\bm \beta^* \in \mathcal{O}$ and for all $\bm \beta \in \mathcal{O}$ there exists $M(\bm x) \in \mathbb{R}$ such that the following holds
\begin{eqnarray*}
&& |\varphi'''(\bm x \bm \beta)| \leq M(\bm x) < \infty \\
&& \mathbb{E}(M(\bm x)|x_{j}x_{k}x_{l}|) < \infty \, \, \, \forall 1 \leq j,k,l \leq p
\end{eqnarray*}
\end{enumerate}
(Reg1) holds actually automatically, by the design of the logistic regression model. However it is provided for the sake of completeness.

Note that the derivatives of $\varphi$ are w.r.t $\bm \beta$. As mentioned in the Appendix of \cite{FanLi2001}, these regularity conditions ensure the asymptotic normality of the ML estimators. The equation in (Reg2) for the Fisher Information matrix can directly be seen if we take the logarithm of $f(y|\bm x,\beta)$, hence the log likelihood, and derive twice w.r.t. $\bm \beta$. So actually, this equation is not a condition but an important equation that we will need during the proof of $\sqrt{n}$ consistency.

\subsection{Regularity conditions (diverging case)}\label{app.reg2}
For the diverging case, assume the following regularity conditions which are stated in \cite{FanLi2001} and also in \cite{FanPeng2004}. Note that in particular also the log likelihood depends on $\bm x$ hence $L_{n}(\bm\beta)=L_{n}(\bm v,\bm \beta)$, where $\bm v=(y,\bm x)$, although we mostly leave out $\bm v$ in the log likelihood for simplicity of notation.
\begin{enumerate}
\item[(div.Reg1)] Assume that the observations $\bm v_{i}=(y_{i},\bm x_{i})\in \mathbb{R}^{p_{n}+1}, i=1,...,n,$ are iid with probability density $f_{n}(\bm v_{1},\bm \beta)$ and we assume that we can write $f_{n}(\bm v_{i},\bm \beta)=h(y_{i}) \exp(y_{i} \bm x_{i}\bm \beta - \varphi(\bm x_{i}\bm \beta))$, as in (Reg1). 
\item[(div.Reg2)] The Fisher information matrix is finite for all $\bm \beta$ and is positive definite at $\bm \beta=\bm \beta^*$.
\item[(div.Reg3)] There exists an open set $\mathcal{O}$ (depending on $n$) for which it holds that $\bm \beta^* \in \mathcal{O}$ such that for all $\bm \beta \in \mathcal{O}$ there exists functions $M_{n,j,k,l}(\bm v) \in \mathbb{R}$ for which it holds 
\begin{eqnarray*}
	\frac{\partial  \log(f_{n}(\bm v_{i},\bm \beta))}{\partial\beta_{j}\partial\beta_{k}\partial\beta_{l}}\leq M_{n,j,k,l}(\bm v_{i})\,\,\forall \bm \beta \in \mathcal{O} \, \, \text{and}\,\,  \forall \,j,k,l =1,...,p_{n}
	\end{eqnarray*}
 Additionally, we assume that for some constant $C_{5}<\infty$ it holds
	\begin{eqnarray*}
	E_{\bm \beta}(M_{n,j,k,l}(\bm v_{i}))<C_{5}<\infty \,\,  \forall \,j,k,l =1,...,p_{n}.
\end{eqnarray*}
\end{enumerate}

\begin{rem}
The above regularity conditions and their consequences are discussed next.
\begin{enumerate}
	\item Using (div.Reg1) we directly get
	 \begin{eqnarray*}
	&& E_{\bm \beta}\left(\frac{\partial \log(f_{n}(\bm v_{1},\bm \beta))}{\partial \beta_{j}}\right)=0\,\,\,\forall j=1,...,p_{n},\\
	&& \bm [I_{F}(\bm \beta)]_{j,k}=E_{\bm \beta}\left(\frac{\partial \log(f_{n}(\bm v_{1},\bm \beta))}{\partial \beta_{j}}\frac{\partial \log(f_{n}(\bm v_{1},\bm \beta))}{\partial \beta_{k}}\right)=E_{\bm \beta}\left(-\frac{\partial^2  \log(f_{n}(\bm v_{1},\bm \beta))}{\partial \beta_{j}\partial \beta_{k}}\right),
\end{eqnarray*}
where $\bm [I_{F}(\bm \beta)]_{j,k}$ denotes the entry of the Fisher information matrix in row $i$ and column $k$ and $\bm v_{i}=(y_{i},\bm x_{i})$.
\item Alternatively to (div.Reg2) we could have also assumed that all the eigenvalues of the Fisher information matrix are finite and strictly positive which ensures the positive definite property, see \cite{FanPeng2004}
\item The fact that we assumed in (div.Reg2) that the Fisher information matrix is finite means in particular that we have $\bm [I_{F}(\bm \beta)]_{j,k}^2<C_{3} < \infty \, \, \forall \,j,k=1,...,p_{n}$  and 
\begin{eqnarray*}
\bm [I_{F}(\bm \beta)]_{j,k}=\displaystyle E_{\bm \beta}\left(-\frac{\partial^2  \log(f_{n}(\bm v_{1},\bm \beta))}{\partial \beta_{j}\partial \beta_{k}}\right) <C_{4}	
\end{eqnarray*}
for some constants $C_{3},C_{4}< \infty$
	
\end{enumerate}
\end{rem}

\begin{rem}
	Note that in (Reg3) $M$ is given as a function of $(\bm x)$ (instead of $\bm v=(y,\bm x)$). In (div.Reg3) $M$ could also be given as a function of  $(\bm x)$, since in both cases, using (div.Reg1) or (Reg1), respectively, the third derivative of the log likelihood does not depend on $y$. 
	\end{rem}

\subsection{Proof of Theorem \ref{exL0fgl}}

\textbf{(1) $S \neq 0$ :}
	Set $J=1$, the proof for $J>1$ works analogously. We will show that the group lasso estimator $\hat{\bm \beta}^{GL} \in S$. By assumption, $0 < \sum_{i=1}^n y_{i} <n $ and by \cite{MeierEtAl2008} (Lemma 1) we can follow that the group lasso estimator $\hat{\bm \beta}^{GL}$ exists. 
	In particular, it holds that there exists an $\bm \varepsilon$-neighborhood of $\hat{\bm \beta}^{GL}$, where $\bm \varepsilon=(\varepsilon_{1},...,\varepsilon_{p}) \in \mathbb{R}^{p}$, such that $\hat{\bm \beta}^{GL}$ minimizes the sum $-L_{n}(\cdot)+\lambda_{1}||\cdot||_{K}$ by definition of the GL estimator. Hence
\begin{eqnarray*}
	-L_{n}(\hat{\bm\beta}^{GL}+\bm \varepsilon)+\lambda_{1}||\hat{\bm \beta}^{GL}+\bm \varepsilon||_{K} \geq -L_{n}(\hat{\bm\beta}^{GL})+\lambda_{1}||\hat{\bm \beta}^{GL}||_{K}.
\end{eqnarray*}
Consequently, adding $\lambda_{0} \sum_{r,s} w_{0}^{(rs)}||\hat{\beta}^{GL}_{r}-\hat{\beta}^{GL}_{s}+\varepsilon_{r}-\varepsilon_{s}||_{0}$ on both sides of the inequality
 \begin{eqnarray}
 M_{pen}(\hat{\bm \beta}^{GL}+\varepsilon)&=&-L_{n}(\hat{\bm \beta}^{GL}+\varepsilon)+\lambda_{1}||\hat{\bm \beta}^{GL}+\varepsilon||_{K}+\lambda_{0} \sum_{r,s} w_{0}^{(rs)}||\hat{\beta}^{GL}_{r}-\hat{\beta}^{GL}_{s}+\varepsilon_{r}-\varepsilon_{s}||_{0}	\nonumber \\
 &\geq & -L_{n}(\hat{\bm \beta}^{GL})+\lambda_{1}||\hat{\bm \beta}^{GL}||_{K}+\lambda_{0} \sum_{r,s} w_{0}^{(rs)}||\hat{\beta}^{GL}_{r}-\hat{\beta}^{GL}_{s}+\varepsilon_{r}-\varepsilon_{s}||_{0}\label{min1}.
 \end{eqnarray}
For the group lasso estimate we have either $\hat{\bm \beta}^{GL}=\bm 0$ or $\hat{\beta}^{GL}_{r} \neq \hat{\beta}^{GL}_{s}\,\, \forall \,r,s$. For the case that we have $\hat{\beta}^{GL}_{r} \neq \hat{\beta}^{GL}_{s}\,\, \forall \,r,s$ we can choose $\bm \varepsilon$ small enough such that $\hat{\beta}^{GL}_{r}+\varepsilon_{r} \neq \hat{\beta}^{GL}_{s} + \varepsilon_{s}\,\, \forall \,r,s$. 
 Consequently, we conclude that the $L_{0}$ norms of $\hat{\beta}^{GL}_{r}-\hat{\beta}^{GL}_{s}$ and $\hat{\beta}^{GL}_{r}-\hat{\beta}^{GL}_{s}+\varepsilon_{r}-\varepsilon_{s}$ coincide since all values of the differences are nonzero. Hence, $||\hat{\beta}^{GL}_{r}-\hat{\beta}^{GL}_{s}+\varepsilon_{r}-\varepsilon_{s}||_{0}=||\hat{\beta}^{GL}_{r}-\hat{\beta}^{GL}_{s}||_{0}$. Then we obtain that the right handside of (\ref{min1}) equals
 \begin{eqnarray*}
  -L_{n}(\hat{\bm \beta}^{GL})+\lambda_{1}||\hat{\bm \beta}^{GL}||_{K}+\lambda_{0} \sum_{r,s} w_{0}^{(rs)}||\hat{\beta}^{GL}_{r}-\hat{\beta}^{GL}_{s}||_{0} = M_{pen}(\hat{\bm \beta}^{GL})
 \end{eqnarray*}
and thus consequently $M_{pen}(\hat{\bm \beta}^{GL}+\bm \varepsilon) \geq M_{pen}(\hat{\bm \beta}^{GL})$ for a sufficiently small $\bm \varepsilon$. If $\hat{\bm \beta}^{GL} = \bm 0$ we get with the same arguments as above
\begin{eqnarray*}
M_{pen}(\hat{\bm \beta}^{GL}+\varepsilon) &\geq& 	-L_{n}(\bm 0)+\lambda_{1}||\bm 0||_{K}+\lambda_{0} \sum_{r,s} w_{0}^{(rs)}\underbrace{||\varepsilon_{r}-\varepsilon_{s}||_{0}}_{\geq \, 0} \\
&\geq& -L_{n}(\bm 0)+\lambda_{1}||\bm 0||_{K}=M_{pen}(\bm 0)=M_{pen}(\hat{\bm \beta}^{GL})
\end{eqnarray*}
thus $M_{pen}(\hat{\bm \beta}^{GL}+\varepsilon) \geq M_{pen}(\hat{\bm \beta}^{GL})$. Hence, the group lasso estimator is an element of the set $S$ giving us that $S \neq \emptyset$ and the first part of the claim follows.  \\
\textbf{(2)} $M_{pen}(\cdot)$ decreases if coefficients that are close enough to each other are fused:
 as we know that the group lasso estimator is one solution of $L_{0}$-FGL but without fusion, we have to show that the objective function decreases if fusion occurs, resulting in an advantage compared to the classical group lasso approach.
	Again, we assume that $J=1$ and we start with the case of an ordinal covariate so we compare adjacent categories for fusion. The goal is to show that the objective function $M_{pen}(\cdot)$ decreases if coefficients that are close enough to each other are fused. Note that, since we chose reference category zero, there is no appearance of the reference category in the coefficient vector $\bm \beta$. Write $\bm \beta_{nf}$ (not fused), $\bm \beta_{f}$ (fused) $\in \mathbb{R}^{p}$ with 
	\begin{eqnarray*}
	&&\bm \beta_{nf}=(\beta_{nf,1},...,\beta_{nf,p})	\,,\,\text{where}\,\, \beta_{nf,i}\neq \beta_{nf,i-1} \, \forall i=2,...,p\,\,\, \text{(not fused),}\\
		&&\bm \beta_{f}=(\beta_{f,1},...,\beta_{f,p})	\,,\,\text{where}\,\, \beta_{nf,i}=\beta_{f,i}\neq \beta_{f,i-1}=\beta_{nf,i-1} \, \forall i=2,...,r-1,r+1,...p\,\, \\ && \hspace*{10cm}\text{and}\,\, \beta_{f,r}=\beta_{f,r-1}.
	\end{eqnarray*}
So in $\bm \beta_{f}$ the categories $r$ and $r-1$ are fused and except for these categories, $\bm \beta_{nf}$ and $\bm \beta_{f}$ coincide. Note that $\beta_{f,r}=\beta_{f,r-1} \in [\min\{\beta_{nf,r},\beta_{nf,r-1}\}, \max\{\beta_{nf,r},\beta_{nf,r-1}\}]$. Without loss of generality, we assume $\min\{\beta_{nf,r},\beta_{nf,r-1}\}=\beta_{nf,r-1}$. Since we observe an ordinal covariate, this holds by definition but observing nominal covariates one has to differentiate between these two cases but the other case works in the same way. Thus it holds that $\beta_{nf,r}-\beta_{nf,r-1}= \epsilon_{1}+\epsilon_{2}=\epsilon > 0$ for some (small) $\epsilon$, meaning that the coefficients of these two categories are close to each other and $\beta_{nf,r}=\beta_{nf,r-1}+\epsilon$, see Figure \ref{existence.fig}. 

\begin{figure}
\begin{center}
\includegraphics[width=0.7\textwidth]{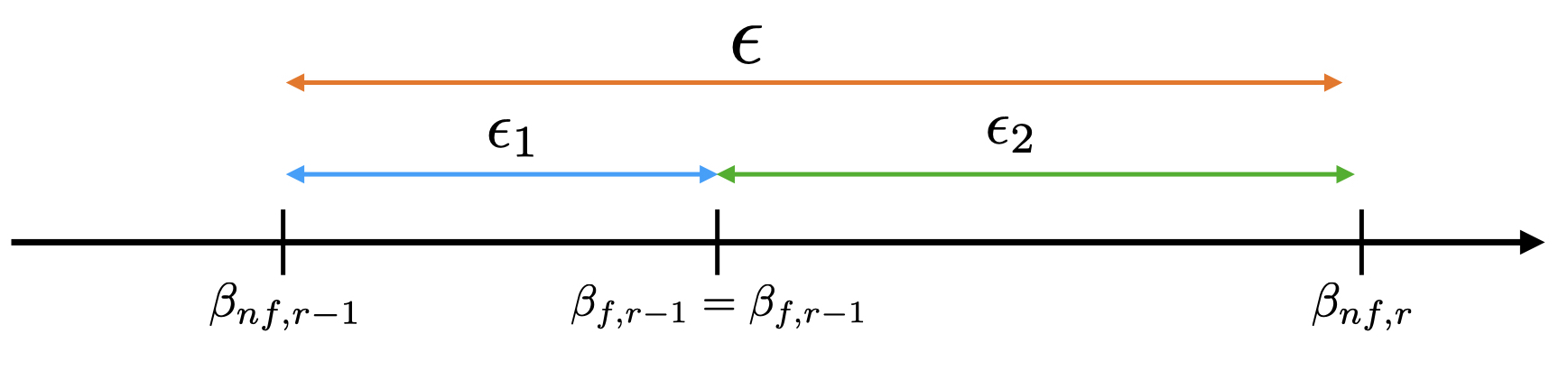}	
\caption{Location of $\beta_{nf,r},\,\beta_{nf,r-1}$ and the fused coefficients $\beta_{f,r}=\beta_{f,r-1}$ for the case $\min\{\beta_{nf,r},\beta_{nf,r-1}\}=\beta_{nf,r-1}$ (other case works analogously)}\label{existence.fig}
\end{center}
\end{figure}

 Now we have to show that $M_{pen}(\bm \beta_{f}) < M_{pen}(\bm \beta_{nf})$. Note that it depends on the design and the tuning etc. how small $\epsilon$ has to be such that the penalty decreases. We choose $\epsilon$ small enough which will be specified later. It holds
 \begin{eqnarray}
 	\bm \beta_{nf}-\bm \beta_{f}=(0,...,0,\epsilon_{1},-\epsilon_{2},0,...,0).
 \end{eqnarray}
Because of the continuity of the negative log likelihood $-L_{n}(\bm \beta)$ and the norm $||\bm \beta||_{\bm K}$ in every component, we can find $\delta_{1},\delta_{2}$ (which both depend on $\epsilon=\epsilon_{1}+\epsilon_{2}$) such that 
\begin{eqnarray*}
|L_{n}(\bm \beta_{f})	-L_{n}(\bm \beta_{\bm \beta_{nf}})| &< {\delta_{1}} \\
|||\bm \beta_{nf}||_{\bm K}-||\bm \beta_{f}||_{\bm K}| &< \delta_{2}
\end{eqnarray*}
Because of the definition of $\bm \beta_{nf}$ (no categories fused) and $\bm \beta_{f}$ (category $r$ and $r-1$ fused) we know that $\sum_{i=1}^{p}w_{0}^{(i)}||\beta_{nf,i}-\beta_{nf,i-1}||_{0}=\sum_{i}w_{0}^{(i)} =: c$ and for the fused version we know $\sum_{i=1}^{p}w_{0}^{(i)}||\beta_{f,i}-\beta_{f,i-1}||_{0}=c- w_{0}^{(r)}$. Furthermore $L_{n}(\bm \beta_{f})	-L_{n}(\bm \beta_{\bm \beta_{nf}})>-\delta_{1}$ and $||\bm \beta_{nf}||_{\bm K}-||\bm \beta_{f}||_{\bm K} > -\delta_{2}$.
Now we have 
\begin{eqnarray}
	M_{pen}(\bm \beta_{nf})-M_{pen}(\bm \beta_{f})&&=-L_{n}(\bm \beta_{nf})+\lambda_{1}||\bm \beta_{nf}||_{\bm K}+\lambda_{0}\sum_{i=1}^{p}w_{0}^{(i)}||\beta_{nf,i}-\beta_{nf,i-1}||_{0} \nonumber \\&&+L_{n}(\bm \beta_{f})-\lambda_{1}||\bm \beta_{f}||_{\bm K}-\lambda_{0}\sum_{i=1}^{p}w_{0}^{(i)}||\beta_{f,i}-\beta_{f,i-1}||_{0} \nonumber \\
	&&=-L_{n}(\bm \beta_{nf})+\lambda_{1}||\bm \beta_{nf}||_{\bm K}+L_{n}(\bm \beta_{f})-\lambda_{1}||\bm \beta_{f}||_{\bm K}+\lambda_{0}\cdot w_{0}^{(r)} \nonumber \\
	&& > - \delta_{1}-\lambda_{1}\delta_{2}+\lambda_{0}\cdot w_{0}^{(r)} \label{ex1}
\end{eqnarray}
Now, if we choose $\lambda_{0}$ (tuning for fusion) large enough and $\epsilon$ small enough (choice of $\epsilon$ affects $\delta_{1}$ and $\delta_{2}$) such that $\lambda_{0}\cdot w_{0}^{(r)} > \delta_{1}+\lambda_{1}\delta_{2}$, we get with the above equation 
\begin{eqnarray*}
M_{pen}(\bm \beta_{nf})- M_{pen}(\bm \beta_{f}) > 0 \Leftrightarrow	M_{pen}(\bm \beta_{nf}) > M_{pen}(\bm \beta_{f})
\end{eqnarray*}
and consequently the value of the objective function in $\bm \beta_{f}$ is less than in $\bm \beta_{nf}$, hence the objective function decreases if we fuse coefficients that are close enough to each other. The proof can directly be extended to the case where we fuse more categories and also for the nominal case. It makes sense that we have to choose $\lambda_{0}$ and $\epsilon$ in a specific way to enforce fusion because $\lambda_{0}$ is the tuning parameter for fusion and $\epsilon$ determines how close the two categories (or in particular their coefficients) are. 
\subsection{Proof of Theorem \ref{root-n-consistency}}
In the proof of Theorem \ref{root-n-consistency}, we will use the following Lemma.
\begin{lemma}\label{lemma.ball}
Let $M_{pen}(\bm \beta)$ be the objective function of $L_{0}$-FGL see (\ref{argminL0fgl}). Assume that we can show for some $\bm x^* \in \mathbb{R}^p$ and $c \in \mathbb{R}$ that
	\begin{eqnarray}
		\inf_{||\bm u||_{2}=c}M_{pen}(\bm x^*+\bm u) > M_{pen}(\bm x^*).\label{ass.lemma}
	\end{eqnarray}
	Then, there exists at least one local minimum of $M_{pen}(\bm \beta)$ inside $\mathcal{D}:=\{\bm x^*+\bm u \, |\, ||\bm u ||_{2} \leq c\}$, where inside means in the domain $\mathring{\mathcal{D}}=\{\bm x^*+\bm u \, |\, ||\bm u ||_{2} < c\}$.
\begin{proof}\hspace*{0cm}\\
\textit{Initial Remark:} If the function $M_{pen}$ was continuous, this would be clear since a continuous function attains its minimum and maximum in a compact set, hence in $\mathcal{D}$, and then we could use (\ref{ass.lemma}) to show that the infimum (minimum) is not attained at the boundary of $\mathcal{D}$. But, since $M_{pen}$ consists of an $L_{0}$ part, it is not continuous. Since we do not penalize the intercept and the intercept just appears in the log likelihood part, we neglect it hence we observe $M_{pen}(\bm \beta)$ for $\bm \beta \in \mathbb{R}^p$ and $\bm x^* \in \mathbb{R}^p$ (instead of $\mathbb{R}^{p+1})$. Consequently, we have to show that $M_{pen}$ attains its infimum in $\mathcal{D}$. Having that, we use (\ref{ass.lemma}) to show that the infimum is not attained at the boundary, hence it is in $\mathring{\mathcal{D}}$.\\

Returning to the proof, we will prove it for the case $p=2$ and $J=1$. Cases of higher dimensions work in a similar manner, although we get more possible cases for the infimum to occur (see below). In this setting of choosing $p=2$ and $J=1$, we just have one weight $w_{0}$ in the $L_{0}$ part (see (\ref{argminL0fgl}) again for the definition of the objective function). We start by partitioning $\mathcal{D}$ into two subsets in the following way 
\begin{eqnarray}
	\mathcal{D}_{1}&:=\mathcal{D}\backslash\{\bm \beta=(\beta_{1},\beta_{2})\,:\, \beta_{1}< \beta_{2}\},\\
		\mathcal{D}_{2}&:=\mathcal{D}\backslash\{\bm \beta=(\beta_{1},\beta_{2})\,:\, \beta_{2}< \beta_{1}\}.
\end{eqnarray}
So the hyperplane satisfying $\beta_{1}=\beta_{2}$ is included in both subsets. We clearly have that $\mathcal{D}=\mathcal{D}_{1}\cup \mathcal{D}_{2}$. This partition is displayed in figure \ref{lemma_ball_fig}.

\begin{figure}
\begin{center}
	\includegraphics[width=.5\textwidth]{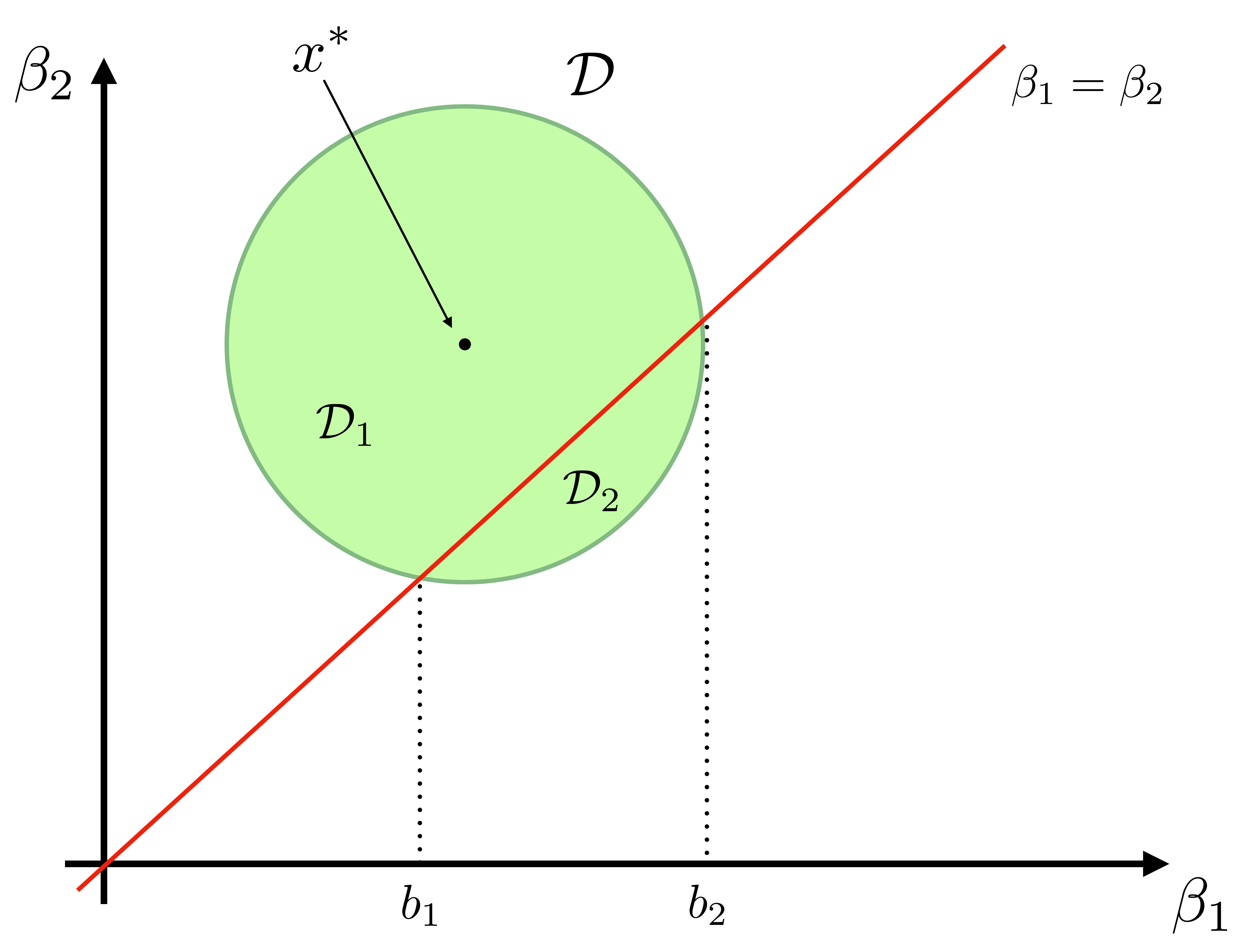}
	\caption{Partition of the ball $\mathcal{D}$ into $\mathcal{D}_{1}$ and $\mathcal{D}_{2}$. The red line shows the $1$-dimensional hyperplane where $f(\bm \beta)$ is not continuous, hence $\beta_{1}=\beta_{2}$}	
	\label{lemma_ball_fig}
\end{center}
\end{figure}
We can write by definition of the objective function $M_{pen}(\bm \beta)=g(\bm \beta)+f(\bm \beta)$ where $g(\bm \beta)$ is the sum of the log likelihood and group lasso part and $f(\bm \beta)$ the $L_{0}$ part. Note that, by definition of the $L_{0}$ norm applied to differences, this norm is equal to zero if the object on which we apply the norm is zero and one otherwise, hence it is zero if the difference is zero and it is one if the difference is nonzero. Keep in mind that we multiply the resulting value with the weight $w_{0}$.\\
 
 For $g(\bm \beta)$ we know that it attains a (local) minimum in $\mathcal{D}$, we write $\bm \beta_{g}=(\beta_{g,1},\beta_{g,2})= \argmin_{\bm \beta \in \mathcal{D}} g(\bm \beta)$. Without loss of generality, we assume that $\bm \beta_{g} \in \mathcal{D}_{1}$, the other case works completely analogous. There are two possible cases that may occur.
\begin{itemize}
\item[Case (1):] $\beta_{g,1} \neq \beta_{g,2}$ \\
Here, we have that $f(\bm \beta_{g})=f((\beta_{g,1},\beta_{g,2}))=1\cdot w_{0}=w_{0}$. Consequently, the infimum of the objective function either occurs in $\mathcal{D}_{1}$ without the hyperplane ($\beta_{1}=\beta_{2}$) or it occurs on this hyperplane. In particular, this means
\begin{eqnarray*}
	\inf_{\bm \beta \in \mathcal{D}}M_{pen}(\bm \beta) \in \{g(\bm \beta_{g})+w_{0}, \inf_{b \in [b_{1}, b_{2}]}g((b,b))\}
\end{eqnarray*}
so the infimum of $M_{pen}$ is either attained in $\bm \beta_{g}$ or in $(b,b)$ for some $b \in [b_{1},b_{2}]$. Later, we will show that with our additional assumption (\ref{ass.lemma}), we know that the infimum is not at the boundary hence $b \in (b_{1},b_{2})$ but this is not important at this point since we just want to show that the infimum is attained somewhere in $\mathcal{D}$.
\item[Case (2):] $\beta_{g,1} = \beta_{g,2}$ \\
In this case we have that $f(\bm \beta_{g})=f((\beta_{g,1},\beta_{g,1}))=0$. Consequently 
\begin{eqnarray*}
	\inf_{\bm \beta \in \mathcal{D}}M_{pen}(\bm \beta) = \inf_{\bm \beta \in \mathcal{D}} g(\bm \beta)\end{eqnarray*}
and hence the infimum of $M_{pen}$ is attained in $\bm \beta_{g}$. 
\end{itemize}
In both cases, there exists some $\tilde{\bm \beta}$ for which the infimum is attained, hence 
\begin{eqnarray*}
\argmin_{\bm \beta \in \mathcal{D}}M_{pen}(\bm \beta)= \tilde{\bm \beta}.
\end{eqnarray*}
Note that, in figure \ref{lemma_ball_fig} it can of course also occur the case that the red hyperplane does not go through the domain $\mathcal{D}$ hence there is no intersection of the hyperplane and $\mathcal{D}$. If this is the case, we are finished since then the function $f$ will be equal to one everywhere, hence $M_{pen}$ would be continuous. 
It remains to show that $\tilde{\bm \beta} \in \mathring{\mathcal{D}}$. Assume that $\tilde{\bm \beta}$ is on the boundary of $\mathcal{D}$, hence $\tilde{\bm \beta} \in \mathcal{D}\backslash \mathring{\mathcal{D}}$. Consequently, it holds by definition of the infimum that 
\begin{eqnarray*}
	\inf_{||\bm u||_{2}=c}M_{pen}(\bm x^*+\bm u) = M_{pen}(\tilde{\bm \beta}) \leq M_{pen}(\bm \beta) \, \, \, \forall \bm \beta \in \mathcal{D} 
\end{eqnarray*}
and this also holds for $\bm \beta = \bm x^*$ which is a contradiction to the assumption (\ref{ass.lemma}). Therefore, it holds that $\tilde{\bm \beta} \in \mathring{\mathcal{D}}$, hence there exists a local minimum of $M_{pen}$ in $\mathring{\mathcal{D}}$ so inside $\mathcal{D}$.
	\end{proof}
	\end{lemma}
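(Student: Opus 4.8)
The plan is to circumvent the discontinuity of $M_{pen}$ by exploiting that its non-continuous part is \emph{lower semicontinuous}, so that the classical compactness argument still delivers a minimizer, and only afterwards to use assumption (\ref{ass.lemma}) to force that minimizer into the interior. First I would write $M_{pen}(\bm\beta)=g(\bm\beta)+f(\bm\beta)$, in the statement's own notation, where $g(\bm\beta)=-L_n(\bm\beta)+\lambda_1\sum_{j}\|\bm\beta_j\|_{K_j}$ collects the log-likelihood and group lasso terms and $f(\bm\beta)=\lambda_0\sum_{j}\sum_{r<s}w_0^{(j,rs)}\|\beta_{j,r}-\beta_{j,s}\|_0$ is the $L_0$ fusion part. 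The function $g$ is continuous, since the logistic log-likelihood is smooth and each $\|\cdot\|_{K_j}$ is a norm, so the burden rests entirely on $f$.

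The key step is to observe that each scalar map $t\mapsto\|t\|_0=\mathbf{1}\{t\neq 0\}$ is lower semicontinuous: it is locally constant, hence continuous, away from $0$, while at the origin $\liminf_{t\to 0}\mathbf{1}\{t\neq0\}=1\geq 0=\mathbf{1}\{0\neq 0\}$. Since each difference $\beta_{j,r}-\beta_{j,s}$ depends continuously on $\bm\beta$ and a combination of lower semicontinuous functions with nonnegative coefficients (here $\lambda_0 w_0^{(j,rs)}\geq 0$) is again lower semicontinuous, $f$ is lower semicontinuous, and therefore so is $M_{pen}=g+f$. I would then invoke the standard fact that a lower semicontinuous function on a nonempty compact set attains its infimum: along a minimizing sequence $\bm\beta_n$ in the compact ball $\mathcal{D}$ one extracts a convergent subsequence with limit $\tilde{\bm\beta}\in\mathcal{D}$, and lower semicontinuity gives $M_{pen}(\tilde{\bm\beta})\leq\liminf_n M_{pen}(\bm\beta_n)=\inf_{\mathcal{D}}M_{pen}$, so $\tilde{\bm\beta}$ is a global minimizer of $M_{pen}$ over $\mathcal{D}$.

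It remains to place $\tilde{\bm\beta}$ in $\mathring{\mathcal{D}}$ and to promote it to a local minimizer. Because $\bm x^*\in\mathring{\mathcal{D}}$ we have $\inf_{\mathcal{D}}M_{pen}\leq M_{pen}(\bm x^*)$, whereas assumption (\ref{ass.lemma}) reads $M_{pen}(\bm x^*)<\inf_{\|\bm u\|_2=c}M_{pen}(\bm x^*+\bm u)$, the latter being the infimum of $M_{pen}$ over the boundary sphere. Chaining these inequalities yields $M_{pen}(\tilde{\bm\beta})=\inf_{\mathcal{D}}M_{pen}<\inf_{\|\bm u\|_2=c}M_{pen}(\bm x^*+\bm u)$, so $\tilde{\bm\beta}$ cannot lie on the boundary and hence $\tilde{\bm\beta}\in\mathring{\mathcal{D}}$. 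Finally, since $\tilde{\bm\beta}$ is interior, there is an open ball around it contained in $\mathcal{D}$ on which $\tilde{\bm\beta}$ already minimizes $M_{pen}$, so $\tilde{\bm\beta}$ is a genuine local minimum of $M_{pen}$ inside $\mathcal{D}$, which is the claim.

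I expect the main obstacle to be conceptual rather than computational, namely recognizing that the $L_0$ term is only lower semicontinuous (not continuous) and that lower semicontinuity is precisely the property preserved under passing to the limit along a minimizing sequence. The explicit case distinction over fusion patterns used in the statement's vicinity is an equivalent, more hands-on certification of the same attainment: $f$ takes only finitely many values and its sublevel sets are closed, so one could instead minimize the continuous function $g$ plus the relevant constant over each of the finitely many closed affine pieces of $\mathcal{D}$ determined by which coordinate differences vanish, and then take the smallest of these finitely many outcomes. I would present the lower semicontinuity route as the cleaner argument, valid for arbitrary $p$ and $J$, and retain the finite-partition viewpoint only as an elementary alternative.
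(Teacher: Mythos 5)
Your proposal is correct, and it takes a genuinely different route to the attainment step than the paper does. The paper uses the same decomposition $M_{pen}=g+f$ (log-likelihood plus group lasso, and $L_0$ part) and the same boundary-exclusion argument via (\ref{ass.lemma}), but it certifies that the infimum over $\mathcal{D}$ is attained only in the special case $p=2$, $J=1$: it partitions the ball along the hyperplane $\beta_1=\beta_2$, where $f$ jumps, and runs a case analysis on whether the minimizer of the continuous part $g$ has $\beta_{g,1}=\beta_{g,2}$ or not, locating the infimum either at $\bm\beta_g$ or at a point of the fusion hyperplane; higher dimensions are dispatched with the remark that they ``work in a similar manner'' despite the combinatorial growth in the number of fusion patterns. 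Your lower-semicontinuity argument replaces this entirely: since $t\mapsto\|t\|_0$ is lsc, $f$ is lsc as a nonnegative combination of lsc functions composed with continuous maps, so $M_{pen}$ is lsc and attains its infimum on the compact ball by the standard minimizing-sequence argument — valid for arbitrary $p$ and $J$ in one stroke, closing the gap the paper leaves open. You also correctly recognize that the paper's case distinction is the elementary ``finite-partition'' version of the same fact ($f$ takes finitely many values and its sublevel sets are closed), which is exactly what your last paragraph says. One small point worth making explicit: the lemma states $c\in\mathbb{R}$, but both your argument and the paper's need $c>0$ so that $\bm x^*\in\mathring{\mathcal{D}}$ and the strict inequality chain $M_{pen}(\tilde{\bm\beta})\leq M_{pen}(\bm x^*)<\inf_{\|\bm u\|_2=c}M_{pen}(\bm x^*+\bm u)$ excludes the boundary; with that understood, your proof is complete and, if anything, cleaner than the published one.
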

	
Before proceeding to the proof of Theorem \ref{root-n-consistency}, we make an initial remark on this theorem.
\begin{rem}[on Theorem \ref{root-n-consistency}]\hspace*{0cm}
\begin{enumerate}
	\item Here, in the fixed case, $J$ and $p_{j} \,(j=1,...,J)$ are fixed so $a_{n}^1$ and $a_{n}^0$ always exist. 
	\item In the following proof, we will refer to \cite{FanLi2001}, proof of Theorem 1, where this work is about properties of nonconcave penalty functions (e.g., the SCAD penalty). In \cite{FanLi2001} (Theorem 1), they show that $||\hat{\bm \beta}-\bm \beta^*||_{2}=O_{p}(\frac{1}{\sqrt{n}}+ a_{n})$, where $a_{n}$ is the maximum of the derivative of the nonconcave penalty function (for example SCAD) and the tuning. Additionally, $\hat{\bm \beta}$ is the resulting estimate corresponding to the penalty function. They also argue in Remark 1, that for $\lambda_{n}\rightarrow 0$, one gets $a_{n}=0$ for SCAD. Here, in our setting, the $L_{0}$ part of our penalty function is not differentiable, hence we show $||\hat{\bm \beta}-\bm \beta^*||_{2}=O_{p}\left (\frac{1}{\sqrt{n}}\right )$ for $\hat{\bm \beta}$ being the $L_{0}$-FGL estimator.
	\item Note that the assumption for $a_{n}^{0}$ is stronger than for $a_{n}^{1}$, but for example \cite{FanLi2001} assumed for nonconvex penalties (SCAD) that $\lambda_{n} \rightarrow 0$ so our assumption is not too strong for $a_{n}^{0}$.
\end{enumerate}
\end{rem}

\begin{proof} (of Theorem \ref{root-n-consistency})
The $L_{0}$-FGL penalty function is known to be given by (see (\ref{argmin2}))
\begin{eqnarray*}
P_{\lambda}(\bm \beta)= \lambda_{n}^1\sum_{j=1}^J w_{1}^{(j)} ||\bm \beta_{j}||_{2} + \lambda_{n}^0 \sum_{j=1}^J \sum_{0 \leq r < s \leq p_{j}} w_{0}^{(j,rs)} ||\beta_{j,r}-\beta_{j,s}||_{0} 	
\end{eqnarray*}
and $M_{pen}(\bm \beta)=-L_{n}(\bm \beta)+P_{\lambda}(\bm \beta)$. Following \cite{FanLi2001} we have to show that $\forall \varepsilon > 0$ we can find a suitable $c>0$ such that the following holds
\begin{eqnarray}
P\left( \inf_{\bm u \in \mathbb{R}^p, ||\bm u||_{2}= c}M_{pen}\left(\bm \beta^* +\frac{1}{\sqrt{n}} \bm u\right) >  M_{pen}(\bm \beta^*)\right) \geq   1- \varepsilon.	\label{rootn}
\end{eqnarray}
In contrast to \cite{FanLi2001}, we minimize the sum of the negative log-likelihood and the chosen penalty where they maximize the negative objective function which clearly is the same. But, this is the reason why we have to show the opposite inequality in (\ref{rootn}) and we have to use the infimum instead of the supremum.
We will transfer the idea of \cite{FanLi2001} to our case of $L_{0}$-FGL including two penalties (GL and $L_{0}$) where the penalty function is not differentiable at any point and, another difference to the previously mentioned approach is that we have to handle with two types of tuning parameters and weights. Having shown (\ref{rootn}), we get that there exists a local minimum inside the ball $\{\bm \beta^* + \frac{1}{\sqrt{n}} \bm u\,\,\text{where}\, ||\bm u||_{2} < c\}$ using Lemma \ref{lemma.ball}. This yields that we can find a local minimizer such that $||\bm \beta^* -\hat{\bm \beta}||_{2}=O_{P}(1/ \sqrt{n})$ which is the claim.\\
We start by plugging in the definition of $M_{pen}(\cdot)$ giving us
\begin{eqnarray*}
&& M_{pen}\left(\bm \beta^* +\frac{1}{\sqrt{n}}\right) - M_{pen}(\bm \beta^*)\\
&=&-L_{n}\left(\bm \beta^*+\frac{1}{\sqrt{n}}\bm u\right)+L_{n}(\bm \beta^*) \lambda_{n}^1 \sum_{j=1}^J w_{1}^{(j)}\left(||\bm \beta_{j}^*+\frac{1}{\sqrt{n}}\bm u||_{2}-||\bm \beta_{j}^*||_{2} \right )\nonumber \\ && + \lambda_{n}^0 \sum_{j=1}^J \sum_{0 \leq r < s \leq p_{j}} w_{0}^{(j,rs)}\left(||\beta_{j,r}^*-\beta_{j,s}^*+ \frac{1}{\sqrt{n}}(u_{r}-u_{s})||_{0} - ||\beta_{j,r}^*-\beta_{j,s}^*||_{0}\right).
\end{eqnarray*}
We will observe the three parts from the right hand side of the equation seperately. Like in \cite{Zou2006} (proof of Theorem 4) we will investigate the behavior of the first part $-L_{n}\left(\bm \beta^*+\frac{1}{\sqrt{n}}\bm u\right )+L_{n}(\bm \beta^*)$ with a Taylor expansion of $f(\bm u):= -L_{n}\left(\bm \beta^*+\frac{1}{\sqrt{n}}\bm u\right )+L_{n}(\bm \beta^*)$ around $\bm u =0$ which gives us using $f(\bm 0)=\bm 0$
\begin{eqnarray}
	-L_{n}\left (\bm \beta^*+\frac{1}{\sqrt{n}}\bm u \right )+L_{n}(\bm \beta^*)=T_{1}^{(n)}+T_{2}^{(n)}+T_{3}^{(n)}\label{sum1}.
\end{eqnarray}
In particular, we have using (Reg1) and writing $\alpha_{n}=\frac{1}{\sqrt{n}}$
\begin{eqnarray*}
	&& T_{1}^{(n)}=-\alpha_{n}\nabla^TL_{n}(\bm \beta^*)\bm u=-\sum_{i=1}^n [y_{i}-\varphi^{'}(\bm x_{i}^T \bm \beta^*)]\bm x_{i}^T \bm u \,\alpha_{n} \\
&& T_{2}^{(n)}= -\frac{1}{2}\bm u^T \nabla^2 L_{n}(\bm \beta^*)\bm u \alpha_{n}^2=\sum_{i=1}^n \frac{1}{2} \varphi''(\bm x_{i}^T \bm \beta^*) \bm u^T \bm x_{i}\bm x_{i}^T\bm u \,\alpha_{n}^2\\
&& T_{3}^{(n)}= -\frac{1}{6}\sum_{i,j,k=1}^{p}\frac{\partial L_{n}(\bm \beta^*)}{\partial \beta_{i}\partial \beta_{j}\partial \beta_{k}}u_{i}u_{j}u_{k}\alpha_{n}^3=\alpha_{n}^3 \sum_{i=1}^n \frac{1}{6} \varphi'''(\bm x_{i}^T \bm \beta^*)(\bm x_{i}^T \bm u)^3.
\end{eqnarray*}
The last equality for $T_{3}^{(n)}$ can be directly seen if we plug in the required form of the log-likelihood. Using that $\alpha_{n}=\frac{1}{\sqrt{n}}$, we obtain the following asymptotic limits
\begin{eqnarray}
&& T_{1}^{(n)}=-\sum_{i=1}^n [y_{i}-\varphi^{'}(\bm x_{i}^T \bm \beta^*)]\frac{\bm x_{i}^T \bm u}{\sqrt{n}} \rightarrow_{d}N(\bm 0, \bm u^T I_{F}(\bm \beta^*)\bm u)	\,\,\, \text{(using CLT)}, \nonumber \\
&& T_{2}^{(n)}= \sum_{i=1}^n \frac{1}{2} \varphi''(\bm x_{i}^T \bm \beta^*) \bm u^T \frac{\bm x_{i}\bm x_{i}^T}{n} \bm u \rightarrow_{p} \frac{1}{2} \bm u^T I_{F}(\bm \beta^*) \bm u \,\,\, \text{(using LLN)},,\label{gw1}\\
&& T_{3}^{(n)}= n^{-1/2}\frac{1}{6} \underbrace {\frac{1}{n}\sum_{i=1}^n  \varphi'''(\bm x_{i}^T \bm \beta^*)(\bm x_{i}^T \bm u)^3}_{\rightarrow_{p} \mathbb{E}(M(\bm x)|\bm x^T \bm u|^3)\, <\, \infty\,\, \text{using (Reg3)}}\,\,\, \text{(using LLN)},\,\, \text{thus } \,\,6 \sqrt{n} T_{3}^{(n)} < \infty, \nonumber 
\end{eqnarray}
see also \cite{Zou2006} proof of Theorem 4. Of course the regularity conditions (Reg1)-(Reg3) are needed to get the asymptotic behavior above.  \\
With these properties we can conclude that the likelihood part of the objective function, hence (\ref{sum1}), is asymptotically dominated by ($\ref{gw1}$), thus by the expression $\bm u^T I_{F}(\bm \beta^*) \bm u$.\\
Since $||\bm \beta_{j}^*-\frac{1}{\sqrt{n}}\bm u||_{2} \leq ||\bm \beta_{j}^*||_{2}+||\frac{1}{\sqrt{n}}\bm u||_{2}$ we obtain $||\bm \beta_{j}^*-\frac{1}{\sqrt{n}}\bm u||_{2} -||\bm \beta_{j}^*||_{2}\leq ||\frac{1}{\sqrt{n}}\bm u||_{2}$. Consequently we admit 
\begin{eqnarray}
\lambda_{n}^1 \sum_{j=1}^J w_{1}^{(j)}\left(||\bm \beta_{j}^*+\frac{1}{\sqrt{n}}\bm u||_{2}-||\bm \beta_{j}^*||_{2} \right )\leq a_{n}^1 \frac{1}{\sqrt{n}}||\bm u||_{2}J\,, \, \, \text{so clearly also }\geq -a_{n}^1 \frac{1}{\sqrt{n}}||\bm u||_{2}J.\nonumber
\end{eqnarray}
Therefore this part of the sum given by the left hand side of the equation above is $O_{p}(1)||\bm u||.$
The last part to analyze is the following, where we use that for $L_{0}$ "norm" it holds $||...||_{0}\leq 1$ and consequently $||\beta_{j,r}^*-\beta_{j,s}^*+ \frac{1}{\sqrt{n}}(u_{r}-u_{s})||_{0} - ||\beta_{j,r}^*-\beta_{j,s}^*||_{0} \leq 1$, hence
\begin{eqnarray}
	&&\lambda_{n}^0 \sum_{j=1}^J \sum_{0 \leq r < s \leq p_{j}} w_{0}^{(j,rs)}\left(||\beta_{j,r}^*-\beta_{j,s}^*+ \frac{1}{\sqrt{n}}(u_{r}-u_{s})||_{0} - ||\beta_{j,r}^*-\beta_{j,s}^*||_{0}\right) \label{sum3}\\
	&&\leq \sum_{j=1}^J \sum_{0 \leq r < s \leq p_{j}} \,\,\,\underbrace{w_{0}^{(j,rs)} \lambda_{n}^0}_{\leq a_{n}^0\,\rightarrow \, K} \, \label{highdim1} 
\end{eqnarray}
giving us that (\ref{sum3}) is also $O_{P}(1)$. Note that $p$ and $p_{j}$ are fixed in this theorem thus they do not grow with the sample size $n$.\\
All in all, we can write 
\begin{eqnarray}
	&&M_{pen}\left(\bm \beta^* +\frac{1}{\sqrt{n}}\right) - M_{pen}(\bm \beta)\nonumber \\
	&=&T_{1}^{(n)}+T_{2}^{(n)}+T_{3}^{(n)}+\lambda_{n}^1 \sum_{j=1}^J w_{1}^{(j)}\left(||\bm \beta_{j}^*+\frac{1}{\sqrt{n}}\bm u||_{2}-||\bm \beta_{j}^*||_{2} \right ) \nonumber \\ 
	&&+ \lambda_{n}^0 \sum_{j=1}^J \sum_{0 \leq r < s \leq p_{j}} w_{0}^{(j,rs)}\left(||\beta_{j,r}^*-\beta_{j,s}^*+ \frac{1}{\sqrt{n}}(u_{r}-u_{s})||_{0} - ||\beta_{j,r}^*-\beta_{j,s}^*||_{0}\right) \nonumber \\
	&=& \underbrace{T_{1}^{(n)}}_{\rightarrow N(...)}+\underbrace{T_{2}^{(n)}}_{\rightarrow \frac{1}{2} \bm u^T I_{F}(\bm \beta^*) \bm u}+\underbrace{T_{3}^{(n)}}_{\text{bounded}} +O_{p}(1)||\bm u||+ O_{p}(1) \label{last}
\end{eqnarray}
We conclude that the expression $M_{pen}\left(\bm \beta^* +\frac{1}{\sqrt{n}}\right) - M_{pen}(\bm \beta)$ is dominated (asymptotically) by $\frac{1}{2}\bm u^T I_{F}(\bm \beta^*) \bm u > 0$ where this expression is positive since the Fisher Information matrix was assumed to be positive definite at $\bm \beta^*$. Hence, for $n$ large enough, we can choose $c$ in such a way (in particular it has to be large enough) that (\ref{last}) $>0$ hence (\ref{rootn}) holds so there exists $\hat{\bm \beta}$ being $\sqrt{n}$-consistent.
\end{proof}	

\subsection{Proof of Theorem \ref{consistency-highdim}}
\begin{rem}[Initial Remark on Theorem \ref{consistency-highdim}]\hspace*{0cm}
\begin{enumerate}
	\item Note that as $J=J_{n}$ depends on $n$, the quantity $p_{n}=\sum_{j=1}^{J_{n}} p_{j}$ also depends on $n$.
	\item One could also additionally assume that the number of levels is bounded, hence $\max\{p_{j} | j=1,...,J_{n}\} = c_{3} < \infty$ for some constant $c_{3} >0$. In this case, the assumption $a_{n}^0J_{n}p_{n}(p_{n}-1)\rightarrow c_{2}$ would simplify to $a_{n}^0J_{n}c_{3}(c_{3}-1)\rightarrow c_{2}$ as $n \rightarrow \infty$, since the number of possible differences for each factor is bounded by  $\frac{c_{3}(c_{3}-1)}{2}$ which will result in  $a_{n}^0J_{n}\rightarrow c_{2}/c_{3}(c_{3}-1)$ as we will see when we execute the proof.
\end{enumerate}
\end{rem}

\begin{proof}
The proof is related to the proof of Theorem 1 in \cite{FanPeng2004}, where such a theorem is shown for nonconcave penalties as SCAD. We will transfer the idea to our case of $L_{0}$-FGL including two penalties (group lasso and $L_{0}$) where the penalty function is not differentiable at any point and, another difference to the previously mentioned approach is that we have to handle with two types of tuning parameters and weights. In \cite{FanPeng2004} (Theorem 1) the weights are chosen to be equal to one. The first part of the proof, where the log-likelihood is observed, is similar to \cite{FanPeng2004}. As in the proof of Theorem \ref{root-n-consistency}, we show that for any given $\varepsilon>0$, we can find a suitable $c$ (large enough) to ensure that
\begin{eqnarray*}
	P\left(\inf_{\bm u \in \mathbb{R}^{p_{n}}, ||\bm u||_{2}=c}M_{pen}(\bm \beta^* +\alpha_{n}\bm u) >  M_{pen}(\bm \beta^*)\right) \geq 1-\varepsilon,
\end{eqnarray*}
where $\bm \beta^*$ is the true underlying parameter vector and $\alpha_{n}=\sqrt{\frac{p_{n}}{n}}$. If we showed the above inequality, we could conclude that we get a probability of at least $1-\varepsilon$ that there exists a local minimum of the objective function inside of the ball $\{\bm \beta^*+\alpha_{n}\bm u \,:\, ||\bm u||_{2}\leq c\}$ using Lemma \ref{lemma.ball}. Consequently there exists a local minimizer $\hat{\bm \beta}$ of the objective function satisfying $||\hat{\bm \beta}-\bm \beta^*||_{2}=O_{p}(\alpha_{n})$. As in the proof of Theorem \ref{asynorm}, we define $H_{n}(\bm u):=M_{pen}(\bm \beta^*+\alpha_{n}\bm u)-M_{pen}(\bm \beta^*)$, where in the proof of Theorem \ref{asynorm} $\alpha_{n}$ corresponds to $\frac{1}{\sqrt{n}}$, and obtain
\begin{eqnarray*}
	H_{n}(\bm u)=-L_{n}(\bm \beta^*+\alpha_{n}\bm u)+L_{n}(\bm \beta^*)+\lambda_{n}^1 \sum_{j=1}^{J_{n}}(w_{1}^{(j)}||\bm \beta_{j}^*+\alpha_{n}\bm u_{j}||_{2}-w_{1}^{(j)}||\bm \beta_{j}||_{2})\\
	+\lambda_{n}^0 \sum_{j=1}^{J_{n}}\sum_{0 \leq r < s \leq p_{j}}(w_{j}^{(j,rs)}||\beta_{j,r}^*-\beta_{j,s}^*+\alpha_{n}(u_{r}-u_{s})||_{0}-w_{0}^{(j,rs)}||\beta_{j,r}^*-\beta_{j,s}^*||_{0}).
\end{eqnarray*}
We will observe the lof likelihood part and the penalty part of the objective function seperately. \\

\textbf{Step 1: Log Liklelihood} \\
For the log likelihood part we perform a Taylor expansion as in the proofs of Theorems \ref{root-n-consistency} and \ref{asynorm} but since we are in the case that $p_{n}$ grows with $n$, the observation of the behavior of the components of the Taylor expansion will differ from the mentioned theorems. In particular, we get for the Taylor expansion of $f(\bm u):=-L_{n}(\bm \beta^*+\alpha_{n}\bm u)+L_{n}(\bm \beta^*)$ around $\bm u=\bm 0$ using the fact that $f(\bm 0)= 0$
\begin{eqnarray*}
	-L_{n}(\bm \beta^*+\alpha_{n}\bm u)+L_{n}(\bm \beta^*)=T_{1}^{(n)}+T_{2}^{(n)}+T_{3}^{(n)}.
\end{eqnarray*}
Please note that we showed $\sqrt{n}$ consistency in Theorem \ref{root-n-consistency} whereas here we show $\alpha_{n}^{-1}=\sqrt{\frac{n}{p_{n}}}$ consistency, hence the form of $T_{i}^{(n)}$ for $i \in \{1,2,3\}$ slightly differ (we multiply by $\alpha_{n}$ instead of $\frac{1}{\sqrt{n}}$). In particular, it holds similarly to the proof of Theorem \ref{root-n-consistency}
\begin{eqnarray*}
&& T_{1}^{(n)}=-\alpha_{n}\nabla^TL_{n}(\bm \beta^*)\bm u\\
&& T_{2}^{(n)}= -\frac{1}{2}\bm u^T \nabla^2 L_{n}(\bm \beta^*)\bm u \alpha_{n}^2\\
&& T_{3}^{(n)}= -\frac{1}{6}\sum_{i,j,k=1}^{p_{n}}\frac{\partial L_{n}(\bm \beta^*)}{\partial \beta_{i}\partial \beta_{j}\partial \beta_{k}}u_{i}u_{j}u_{k}\alpha_{n}^3.
\end{eqnarray*}
Please note that these expressions of $T_{i}^{(n)}, \, i \in \{1,2,4\}$ are the same as in Theorem \ref{root-n-consistency} if we plug in $f(\bm v, \bm \beta)$ which gives us a particular form of the log likelihood $L_{n}$ and hence we can simplify the expressions to the expressions given in Theorem \ref{root-n-consistency}. Nevertheless, as we focus on another way of analyzing the summands, we work with the expressions above. Keep in mind that the likelihood is given by $L_{n}(\bm \beta)=\sum_{i=1}^n \log(f_{n}(\bm v_{i}, \bm \beta))$.

For $T_{1}^{(n)}$ we get using the Cauchy-Schwartz inequality and (div.Reg2) 
\begin{eqnarray*}
	|T_{1}^{(n)}|&=&|\alpha_{n}\nabla^TL_{n}(\bm \beta^*)\bm u| \\
	&\leq & \alpha_{n}||\nabla^T L_{n}(\bm \beta^*)||_{2}\, ||\bm u||_{2} \\
	&=&O_{p}(\alpha_{n}\sqrt{np_{n}})||\bm u||_{2}
	=O_{p}(\alpha_{n}^2 n) ||\bm u||_{2}=O_{p}(p_{n}) ||\bm u||_{2}
	\end{eqnarray*}
	since
	\begin{eqnarray*}
		||\nabla^T L_{n}(\bm \beta^*)||_{2}^2&&=\sum_{j=1}^{p_{n}} \frac{\partial L_{n}(\bm \beta^*)}{\partial \beta_{j}}\frac{\partial L_{n}(\bm \beta^*)}{\partial \beta_{j}}\\
		&& =n \sum_{j=1}^{p_{n}} \underbrace{\frac{1}{n}\sum_{i=1}^n \frac{\partial \log f_{n}(v_{i},\bm \beta^*)}{\partial \beta_{j}}\frac{\partial \log f_{n}(v_{i},\bm \beta^*)}{\partial \beta_{j}}}_{\rightarrow_{p}\,\, \mathbb{E}\left (\frac{\partial \log f_{n}(v_{i},\bm \beta^*)}{\partial \beta_{j}}\frac{\partial \log f_{n}(v_{i},\bm \beta^*)}{\partial \beta_{j}}\right )= [\bm I_{F}(\bm \beta^*)]_{j,j}\, <\, C_{4}} \\
		&& = p_{n}\,n\,O_{p}(1) \\
	\Rightarrow ||\nabla^T L_{n}(\bm \beta^*)||_{2} &&= O_{p}(\sqrt{n\,p_{n}}).
	\end{eqnarray*}
	So in particular we can write $T_{1}^{(n)}=O_{p}(p_{n}) ||\bm u||_{2}$ since $\alpha_{n}^{2}n=p_{n}=\alpha_{n}\sqrt{np_{n}}$.\\
	For the second summand $T_{2}^{(n)}$ it holds as in \cite{FanPeng2004}
	\begin{eqnarray*}
		T_{2}^{(n)}&&=-\frac{1}{2}\bm u^T \nabla^2 L_{n}(\bm \beta^*)\bm u \alpha_{n}^2 \\
		&&= -\frac{1}{2}\bm u^T \nabla^2 L_{n}(\bm \beta^*)\bm u \alpha_{n}^2 \underbrace{+\frac{1}{2}\bm u^T \bm I_{F}(\bm \beta^*)\bm u n \alpha_{n}^2 -\frac{1}{2}\bm u^T \bm I_{F}(\bm \beta^*)\bm u n \alpha_{n}^2}_{=0}\\
				&&= -\frac{1}{2}\bm u^T\left[\frac{1}{n}(\nabla^2 L_{n}(\bm \beta^*)+\bm I_{F}(\bm \beta^*))\right]\bm u n \alpha_{n}^2+\frac{1}{2}\bm u^T \bm I_{F}(\bm \beta^*) \bm u n \alpha_{n}^2 \\
			&&= -\frac{1}{2}\bm u^T\left[\frac{1}{n}(\nabla^2 L_{n}(\bm \beta^*)-E(\nabla^2L_{n}(\bm \beta^*))\right]\bm u n \alpha_{n}^2+\frac{1}{2}\bm u^T \bm I_{F}(\bm \beta^*) \bm u n \alpha_{n}^2 \\
		&&=\frac{n}{2}\alpha_{n}^2\bm u^T o_{p}(1/p_{n})\bm u+\frac{n}{2}\alpha_{n}^2 \bm u^T \bm I_{F}(\bm \beta^*)\bm u
	\end{eqnarray*}
	where we used that $\bm I_{F}(\bm \beta^*)=-E(\nabla^2 L_{n}(\bm \beta^*))$ and $||\frac{1}{n}\nabla^2 L_{n}(\bm \beta^*)+\bm I_{F}(\bm \beta^*)|| =o_{p}(\frac{1}{p_{n}})$ following Lemma 8 of \cite{FanPeng2004} which needs the assumption $p_{n}^4/n \rightarrow 0$ as $n\rightarrow \infty$. Since we have  $||\frac{1}{n}\nabla^2 L_{n}(\bm \beta^*)+\bm I_{F}(\bm \beta^*)|| =o_{p}(\frac{1}{p_{n}})$ we know that, by definition, $||\frac{1}{n}\nabla^2 L_{n}(\bm \beta^*)+\bm I_{F}(\bm \beta^*)||\,p_{n}$ converges to zero in probability so using $p_{n}\geq 1$ we get $||\frac{1}{n}\nabla^2 L_{n}(\bm \beta^*)+\bm I_{F}(\bm \beta^*)||\,p_{n}\geq ||\frac{1}{n}\nabla^2 L_{n}(\bm \beta^*)+\bm I_{F}(\bm \beta^*)||$ hence the r.h.s. converges also to zero in probability so the r.h.s. is $o_{p}(1)$. Consequently, 
	\begin{eqnarray*}
		T_{2}^{(n)}&=&\frac{n}{2}\alpha_{n}^2 \bm u^T o_{p}(1)\bm u+\frac{n}{2}\alpha_{n}^2 \bm u^T \bm I_{F}(\bm \beta^*)\bm u\\
		&=& \frac{1}{2}p_{n}\bm u^T(\bm I_{F}(\bm \beta^*)+o_{p}(1))\bm u.
	\end{eqnarray*}
	The last summand $T_{3}^{(n)}$ is treated as follows. 
	\begin{eqnarray*}
		|T_{3}^{(n)}|&&=\frac{1}{6}\left|\sum_{i,j,k=1}^{p_{n}}\frac{\partial^3 L_{n}(\bm \beta^*)}{\partial \beta_{i}\partial \beta_{j}\partial \beta_{k}}u_{i}u_{j}u_{k}\alpha_{n}^3\right|= \frac{1}{6}\left| \sum_{l=1}^n \sum_{i,j,k}^{p_{n}}\frac{\partial^3\log f_{n}(\bm v_{l},\bm \beta^*)}{\partial \beta_{i}\partial \beta_{j}\partial \beta_{k}}u_{i}u_{j}u_{k}\alpha_{n}^3 \right| \\
		&& \leq \frac{1}{6}\alpha_{n}^3 \underbrace{ \sum_{l=1}^n \left|\sum_{i,j,k}^{p_{n}}\frac{\partial^3\log f_{n}(\bm v_{l},\bm \beta^*)}{\partial \beta_{i}\partial \beta_{j}\partial \beta_{k}}u_{i}u_{j}u_{k}\right|}_{(*)},
		\end{eqnarray*}
		where, using the Cauchy Schwartz inequality we obtain
		\begin{eqnarray*}
			(*)=\sum_{l=1}^n \left|\sum_{i,j,k}^{p_{n}}\frac{\partial^3\log f_{n}(\bm v_{l},\bm \beta^*)}{\partial \beta_{i}\partial \beta_{j}\partial \beta_{k}}u_{i}u_{j}u_{k}\right|  \leq  \sum_{l=1}^n || (\log f_{n}(\bm v_{l},\bm \beta^*))'''||_{2} \cdot|| \bm u||_{2} ^3,
		\end{eqnarray*}
		where $||\cdot ||_{2}$ is the euclidean norm. Following (div.Reg3), we know that we can bound every component in $|| (\log f_{n}(\bm v_{l},\bm \beta^*))'''||_{2}$ by some function $M_{n,i,j,k}(\bm x_{l})$, hence
		\begin{eqnarray*}
			 \sum_{l=1}^n || (\log f_{n}(\bm v_{l},\bm \beta^*))'''||_{2} \leq \sum_{l=1}^n \left(\sum_{i,j,k=1}^{p_{n}} M_{n,i,j,k}^2(\bm x_{l})\right)^{1/2}
		\end{eqnarray*}
		so consequently
		\begin{eqnarray}
			(*) \leq \sum_{l=1}^n || (\log f_{n}(\bm v_{l},\bm \beta^*))'''||_{2} \cdot|| \bm u||_{2} ^3 \leq ||\bm u||_{2}^3 \sum_{l=1}^n \left(\sum_{i,j,k=1}^{p_{n}} M_{n,i,j,k}^2(\bm x_{l})\right)^{1/2}. \label{ineq1}
		\end{eqnarray}
	Now we have to observe the asymptotic behavior of the r.h.s. of the inequality $(\ref{ineq1})$. Using the Cauchy Schwarz inequality we obtain 
	\begin{eqnarray}
		&& \left(\sum_{i,j,k=1}^{p_{n}} M_{n,i,j,k}^2(\bm x_{l})\right)^{2} = \left(\sum_{i,j,k=1}^{p_{n}} M_{n,i,j,k}^2(\bm x_{l})\cdot 1 \right)^{2} \leq \left(\sum_{i,j,k=1}^{p_{n}} M_{n,i,j,k}^2(\bm x_{l})\right) p_{n}^3 \nonumber \\
		&& \Rightarrow \sum_{i,j,k=1}^{p_{n}} M_{n,i,j,k}^2(\bm x_{l}) \leq p_{n}^3 \nonumber \\
		&& \Rightarrow \left(\sum_{i,j,k=1}^{p_{n}} M_{n,i,j,k}^2(\bm x_{l})\right)^{1/2} \leq p_{n}^{3/2}\label{cauchyschw1}
	\end{eqnarray}
	With (\ref{cauchyschw1}) we can write using $\alpha_{n}=\sqrt{p_{n}/n}$
	\begin{eqnarray*}
		|T_{3}^{(n)}| &\leq & \frac{1}{6}\alpha_{n}^3 ||\bm u||_{2}^3 \,\sum_{l=1}^n \left(\sum_{i,j,k=1}^{p_{n}} M_{n,i,j,k}^2(\bm x_{l})\right)^{1/2} \\
		&\leq & \frac{1}{6}\alpha_{n}^3 ||\bm u||_{2}^3 \,\sum_{l=1}^n p_{n}^{3/2} =\frac{1}{6}\alpha_{n}^3 ||\bm u||_{2}^3 n p_{n}^{3/2}= \frac{1}{6}||\bm u||_{2}^3 \frac{p_{n}^3}{\sqrt{n}}		\end{eqnarray*}
		Since we assumed $\frac{p_{n}^4}{n}\rightarrow 0$ we get using $0 \leq \frac{p_{n}^2}{\sqrt{n}} = \sqrt{\frac{p_{n}^4}{n}} \leq \frac{p_{n}^4}{n}\rightarrow 0$ that $\frac{p_{n}^2}{\sqrt{n}} \rightarrow 0$. Consequently, it holds that $\frac{p_{n}^3}{\sqrt{n}}=o_{p}(p_{n})$.
		Hence summing up $T_{3}^{(n)}=o_{p}(p_{n})||\bm u||_{2}^3$. \\
		\textbf{Step 2: Penalty}\\
		Now we can come to the penalty parts where we start with the GL part before we come to the $L_{0}$ part of the $L_{0}$-FGL penalty function. 
\begin{eqnarray*}
	|\lambda_{n}^1 \sum_{j=1}^{J_{n}}(w_{1}^{(j)}||\bm \beta_{j}^*+\alpha_{n}\bm u_{j}||_{2}-w_{1}^{(j)}||\bm \beta_{j}^*||_{2})| && \leq \lambda_{n}^1 \sum_{j=1}^{J_{n}} w_{1}^{(j)}\alpha_{n}||\bm u_{j}||_{2} \\
	&& \leq ||\bm u||_{2} \,\alpha_{n} \sum_{j=1}^{J_{n}} \lambda_{n}^1 w_{1}^{(j)}\\
	&& \leq ||\bm u||_{2} \, \alpha_{n} a_{n}^1 J_{n} \\
	&&=O_{p}(1)||\bm u||_{2}
	\end{eqnarray*}
	since by assumption $\alpha_{n}a_{n}^1 J_{n}\rightarrow c_{1}$ as $n \rightarrow \infty$.
	Lastly, since $||\beta_{j,r}^*-\beta_{j,s}^*+\alpha_{n}(u_{r}-u_{s})||_{0}-||\beta_{j,r}^*-\beta_{j,s}^*||_{0} \leq 1$, we obtain
	\begin{eqnarray*}
	&& \lambda_{n}^0\sum_{j=1}^{J_{n}}\sum_{0 \leq r < s \leq p_{j}}(w_{0}^{(j,rs)}||\beta_{j,r}^*-\beta_{j,s}^*+\alpha_{n}(u_{r}-u_{s})||_{0}-w_{0}^{(j,rs)}||\beta_{j,r}^*-\beta_{j,s}^*||_{0}) \\
	& \leq &  \sum_{j=1}^{J_{n}}\sum_{0 \leq r < s \leq p_{j}} \lambda_{n}^0 w_{j}^{(j,rs)}\\
	& \leq &  a_{n}^0 \sum_{j=1}^{J_{n}}\sum_{0 \leq r < s \leq p_{j}} 1 :=(*)
	\end{eqnarray*}
	The quantity $\sum_{j=1}^{J_{n}}\sum_{0 \leq r < s \leq p_{j}} 1$ is equal to the number of differences including all $J_{n}$ predictors of the model. Of course, this depends on the design whether we observe ordinal or nominal covariates, or mixtures. The highest number of possible differences occurs when all covariates are nominal, hence it can be bounded by $\frac{p_{n}(p_{n}-1)}{2}$ where we remember that $p_{n}$ is the total number of levels of all covariates. So it holds 
	\begin{eqnarray*}
		 \sum_{j=1}^{J_{n}}\sum_{0 \leq r < s \leq p_{j}} 1 \leq \frac{p_{n}(p_{n}-1)}{2}.
	\end{eqnarray*}
	Additionally, we assumed that $a_{n}^0 p_{n}(p_{n}-1) \rightarrow c_{2}$ as $n \rightarrow \infty$, hence we get $a_{n}^0 p_{n}(p_{n}-1) =O_{p}(1)$ and finally \\
	\begin{eqnarray*}
 (*)=a_{n}^0 \sum_{j=1}^{J_{n}}\sum_{0 \leq r < s \leq p_{j}} 1 \leq a_{n}^0 \frac{p_{n}(p_{n}-1)}{2}=O_{p}(1)
	\end{eqnarray*}
	so the $L_{0}$ part of the penalty function is $O_p(1)$.
	
	Now that we observed all parts seperately, we can conclude 
		\begin{eqnarray*}
	H_{n}(\bm u)&=&-L_{n}(\bm \beta^*+\alpha_{n}\bm u)+L_{n}(\bm \beta^*)+\lambda_{n}^1 \sum_{j=1}^{J_{n}}(w_{1}^{(j)}||\bm \beta_{j}^*+\alpha_{n}\bm u_{j}||_{2}-w_{1}^{(j)}||\bm \beta_{j}||_{2})\\
	&+&\lambda_{n}^0 \sum_{j=1}^{J_{n}}\sum_{0 \leq r < s \leq p_{j}}(w_{j}^{(j,rs)}||\beta_{j,r}^*-\beta_{j,s}^*+\alpha_{n}(u_{r}-u_{s})||_{0}-w_{0}^{(j,rs)}||\beta_{j,r}^*-\beta_{j,s}^*||_{0}) \\
	&=& \underbrace{T_{1}^{(n)}}_{=O_{p}(p_{n}) ||\bm u||}+\underbrace{T_{2}^{(n)}}_{=\frac{1}{2}p_{n}\bm u^T(\bm I_{F}(\bm \beta^*)+o_{p}(1))\bm u.}+\underbrace{T_{3}^{(n)}}_{=o_{p}(p_{n})||\bm u||^2}\\
	&+&\underbrace{\lambda_{n}^1 \sum_{j=1}^{J_{n}}(w_{1}^{(j)}||\bm \beta_{j}^*+\alpha_{n}\bm u_{j}||_{2}-w_{1}^{(j)}||\bm \beta_{j}||_{2})}_{= O_{p}(1)||\bm u||_{2}}\\
	&+&\underbrace{\lambda_{n}^0 \sum_{j=1}^{J_{n}}\sum_{0 \leq r < s \leq p_{j}}(w_{j}^{(j,rs)}||\beta_{j,r}^*-\beta_{j,s}^*+\alpha_{n}(u_{r}-u_{s})||_{0}-w_{0}^{(j,rs)}||\beta_{j,r}^*-\beta_{j,s}^*||_{0})}_{= O_{p}(1)}.
\end{eqnarray*}
We can see that all the summands are dominated by $\frac{1}{2}p_{n} \bm u^T\bm I_{F}(\bm \beta^*)\bm u >0 $ where the last inequality holds since we assumed that the Fisher information matrix is positive definite in $\bm \beta=\bm \beta^*$ in (div.Reg2). So choosing $c$ large enough, we can ensure that the whole r.h.s. of the equation above is $> 0$, hence $H_{n}(\bm u) > 0$. In particular, note that for a smaller value of $\varepsilon$, we have to choose a larger $c$. 
\end{proof}

\subsection{Proof of Theorem \ref{asynorm}}
\begin{proof}
 The proof follows \cite{Zou2006} where the oracle properties for the adaptive lasso are shown. We write $\bm \beta= \bm \beta^* + \frac{\bm u}{\sqrt{n}}$ and remember that $H_{n}(\bm u):=M_{pen}\left(\bm \beta^*+\frac{\bm u}{\sqrt{n}}\right )-M_{pen}(\bm \beta)$. We aim to minimize $\hat{\bm u}_{n}= \argmin_{\bm u} H_{n}(\bm u)$, then $\hat{\bm u}_{n}= \sqrt{n}(\hat{\bm \beta}_{n}-\bm \beta^*)$.
	It holds
	\begin{eqnarray}
	&& H_{n}(\bm u)\nonumber \\
	&&= \underbrace{\sum_{i=1}^{n} \left[-y_{i}(x_{i}^T(\bm \beta^* + \frac{\bm u}{\sqrt{n}}))+ \varphi(x_{i}^T(\bm \beta^* +\frac{\bm u}{\sqrt{n}}))\right]- \sum_{i=1}^n \left[-y_{i}(x_{i}^T \bm \beta^*)+\varphi(x_{i}^T \bm \beta^*)\right]}_{=-L_{n}(\bm \beta^*+\frac{\bm u}{\sqrt{n}})+L_{n}(\bm \beta^*)}	\nonumber \\
	&&\hspace*{0.3cm}- \lambda_{n}^{1} \sum_{j=1}^{J} \left[w_{1}^{(j)} ||\bm \beta_{j}^* +\frac{\bm u_{j}}{\sqrt{n}}||_{2}-w_{1}^{(j)}||\bm \beta_{j}^*||_{2}\right]\nonumber \\
	&& \hspace*{0.3cm}+ \lambda_{n}^{0} \sum_{j=1}^{J} \sum_{0 \leq r < s \leq p_{j}}\left[w_{0}^{(j,rs)}||\beta_{j,r}^*-\beta_{j,s}^*+\frac{u_{j,r}-u_{j,s}}{\sqrt{n}}||_{0}\right] \nonumber\\
	&& \hspace*{0.3cm}- \left[ w_{0}^{(j,rs)}||\beta_{j,r}^*-\beta_{j,s}^*||_{0} \right] \nonumber\\
	&& = \sum_{i=1}^n \left[-y_{i}x_{i}^T \frac{\bm u}{\sqrt{n}}+\varphi(x_{i}^T(\bm \beta^*+\frac{\bm u}{\sqrt{n}}))-\varphi(x_{i}^T \bm \beta^*)\right] \label{Taylor}\\
	&& \hspace*{0.3cm}- \lambda_{n}^1 \sum_{j=1}^J \left[w_{1}^{(j)} ||\bm \beta_{j}^* +\frac{\bm u_{j}}{\sqrt{n}}||_{2}-w_{1}^{(j)}||\bm \beta_{j}^*||_{2}\right] \label{a3}\\
	&& \hspace*{0.25cm}+ \lambda_{n}^{0} \sum_{j=1}^J \sum_{0 \leq r < s \leq p_{j}}\left[w_{0}^{(j,rs)}||\beta_{j,r}^*-\beta_{j,s}^*+\frac{u_{j,r}-u_{j,s}}{\sqrt{n}}||_{0}-w_{0}^{(j,rs)}||\beta_{j,r}^*-\beta_{j,s}^*||_{0} \right]\label{a5}
	\end{eqnarray}
Now, we will execute a taylor expansion of (\ref{Taylor}) as in (\ref{sum1}). Writing $f(\bm u):= (\ref{Taylor})$ and executing the Taylor expansion around $\bm u = \bm 0$ giving us $f(\bm u)= T_{1}^{(n)}+T_{2}^{(n)}+T_{3}^{(n)}$ using $f(\bm 0)=\bm 0$, see (\ref{sum1}). Now we will analzye the asymptotic behavior of the components of $(\ref{Taylor})$ and (\ref{a3}) as well as (\ref{a5}). Using the fact that the distribution of the response giving the data is member of an exponential family, we know that the following holds 
\begin{eqnarray*}
&& \mathbb{E}([y_{i}-\varphi^{'}(\bm x_{i}^T \bm \beta^*)](\bm x_{i}^T \bm u))=0, \\
&& \text{Var}([y_{i}-\varphi^{'}(\bm x_{i}^T \bm \beta^*)](\bm x_{i}^T \bm u))= \mathbb{E}(\varphi^{''}(\bm x_{i}^T \bm \beta^*)(\bm x_{i}^T \bm u)^2)= \bm u^T \bm I_{F}(\bm \beta^*) \bm u	.
\end{eqnarray*}
	Having that, the central limit theorem (CLT) gives us 
	\begin{eqnarray*}
	T_{1}^{(n)} \rightarrow_{d} \bm u^T N(\bm 0, \bm I_{F}(\bm \beta^*))	.
	\end{eqnarray*}
In addition 
\begin{eqnarray*}
\sum_{i=1}^n \varphi^{''}(\bm x_{i}^T \bm \beta^*) \frac{\bm x_{i} \bm x_{i}^T}{n} \rightarrow_{p} \bm I_{F}(\bm \beta^*),	
\end{eqnarray*}
thus
\begin{eqnarray*}
T_{2}^{(n)}	= \sum_{i=1}^n \frac{1}{2}\varphi^{''}(\bm x_{i}^T \bm \beta^*) \bm u^T \frac{\bm x_{i} \bm x_{i}^T}{n} \bm u \rightarrow_{p} \frac{1}{2}\bm u^T \bm I_{F}(\bm \beta^*)\bm u.
\end{eqnarray*}
For $T_{3}^{(n)}$ we can use the assumed regularity conditions to bound the expression (using LLN)
\begin{eqnarray*}
6 \sqrt{n} T_{3}^{(n)}	\leq \sum_{i=1}^n \frac{1}{n} M(\bm x_{i}^T)|\bm x_{i}^T \bm u|^3 \rightarrow_{p} \mathbb{E}(M(\bm x)|\bm x^T \bm u|^3) < \infty.
\end{eqnarray*}
Hence, it remains to analyze the asymptotic behavior of $P_{1}^{(n)}:=$ (\ref{a3}), hence the group lasso penalty party and $P_{0}^{(n)}:=$ (\ref{a5}), hence the $L_{0}$ penalty part. For
\begin{eqnarray*}
P_{1}^{(n)}= \lambda_{n}^1 \sum_{j=1}^J \left[ w_{1}^{(j)} ||\bm \beta_{j}^* +\frac{\bm u_{j}}{\sqrt{n}}||_{2} -w_{1}^{(j)}||\bm \beta_{j}^*||_{2}\right]	
\end{eqnarray*}
we get for the case that $\bm \beta_{j}^* \neq \bm 0$ the following (where $\tilde{\bm \beta}$ is the classical MLE) 
\begin{eqnarray*}
w_{1}^{(j)}&=& \frac{1}{||\tilde{ \bm \beta}_{j}||_{2}^{\gamma}} \rightarrow_{p} ||\bm \beta^*_{j}||_{2}^{- \gamma}\\
\sqrt{n}\{||\bm \beta_{j}^* + \frac{\bm u_{j}}{\sqrt{n}}||_{2}-||\bm \beta_{j}^*||_{2}\} &\leq & \sqrt{n}\{||\bm \beta_{j}^*||_{2}+\frac{1}{\sqrt{n}}||\bm u_{j}||_{2}-||\bm \beta_{j}^*||_{2}\} =||\bm u_{j}||_{2} < \infty \\
\sqrt{n}\{||\bm \beta_{j}^* + \frac{\bm u_{j}}{\sqrt{n}}||_{2}-||\bm \beta_{j}^*||_{2}\} &\geq & \sqrt{n}\{||\bm \beta_{j}^*||_{2}-||\frac{\bm u_{j}}{\sqrt{n}}||_{2}-||\bm \beta_{j}^*||_{2} \}=-||\bm u_{j} ||_{2} > -\infty
\end{eqnarray*}
This gives us 
\begin{eqnarray}
-||\bm u_{j}||_{2} \leq \sqrt{n}\{||\bm \beta_{j}^* + \frac{\bm u_{j}}{\sqrt{n}}||_{2}- ||\bm \beta_{j}^*||_{2}\} \leq ||\bm u_{j}||_{2}.	\label{bdd}
\end{eqnarray}
Using Slutsky we end up with 
\begin{eqnarray*}
	P_{1}^{(n)}&=& \lambda_{n}^1 \sum_{j=1}^J \left[ w_{1}^{(j)} ||\bm \beta_{j}^* +\frac{\bm u_{j}}{\sqrt{n}}||_{2} -w_{1}^{(j)}||\bm \beta_{j}^*||_{2}\right] \\	
	&=& \underbrace{\frac{\lambda_{n}^1}{\sqrt{n}}}_{\rightarrow 0} \sum_{j=1}^J \underbrace{w_{1}^{(j)}}_{\rightarrow_{p}||\bm \beta_{j}^*||_{2}^{-\gamma}} ||\bm \beta_{j}^*||^{-\gamma} \underbrace{\sqrt{n}\left[||\bm \beta_{j}^* +\frac{\bm u_{j}}{\sqrt{n}}||_{2} -||\bm \beta_{j}^*||_{2}\right]}_{\text{bounded using }(\ref{bdd})}\rightarrow_{p} 0
\end{eqnarray*}
for the case that $\bm \beta_{j}^* \neq \bm 0$. Now we come to the case that $\bm \beta_{j}^{*}= \bm 0$. In this case we get with $w_{1}^{(j)}= ||\tilde{\bm \beta}_{j}||_{2}^{-\gamma}= n^{\gamma/2}||\tilde{\bm \beta}_{j}\sqrt{n}||_{2}^{-\gamma}$
\begin{eqnarray*}
P_{1}^{(n)}= \lambda_{n}^1 \sum_{j=1}^J w_{1}^{(j)}|| \frac{\bm u_{j}}{\sqrt{n}}||_{2}= \frac{\lambda_{n}^1}{\sqrt{n}}\sum_{j=1}^J \sqrt{n} w_{1}^{(j)}||\frac{\bm u_{j}}{\sqrt{n}}||_{2}= \underbrace{\lambda_{n}^1n^{(\gamma-1)/2}}_{\rightarrow \infty}\sum_{j=1}^J \underbrace{||\tilde{\bm \beta_{j}}\sqrt{n}||_{2}^{-\gamma}}_{O_{p}(1)}\underbrace{||\bm u_{j}||_{2}}_{\text{bounded}}.	
\end{eqnarray*}
which goes to $\infty$ for $||\bm u_{j}||_{2} \neq 0$ and equals $0$ otherwise. Hence we get
\begin{eqnarray}
P_{1}^{(n)}	\rightarrow \begin{cases}
0  \,\,\,\,\,\, \text{if}\,\,\, ||\bm u_{j}||_{2}=0, \, \bm \beta_{j}^*= \bm 0,\\
\infty \,\,\,\, \text{if} \,\,\, ||\bm u_{j}||_{2} \neq 0, \, \bm \beta_{j}^* = \bm 0, \\
0 \,\,\,\,\,\,\,\, \text{if} \,\,\, \bm \beta_{j}^* \neq \bm 0 .	
 \end{cases} \label{a3.asy}
\end{eqnarray}
As in the proof of Theorem \ref{root-n-consistency} we know that
\begin{eqnarray}
P_{0}^{(n)}&&=\lambda_{n}^{0} \sum_{j=1}^J \sum_{0 \leq r < s \leq p_{j}}\left[w_{0}^{(j,rs)}||\beta_{j,r}^*-\beta_{j,s}^*+\frac{u_{j,r}-u_{j,s}}{\sqrt{n}}||_{0}-w_{0}^{(j,rs)}||\beta_{j,r}^*-\beta_{j,s}^*||_{0} \right] \nonumber \\
&&\leq \underbrace{\sum_{j=1}^J \sum_{0 \leq r < s \leq p_{j}} \,\,\,\underbrace{w_{0}^{(j,rs)} \lambda_{n}^0}_{\leq \, a_{n}^0\, \rightarrow \, 0\, }  \,p}_{\rightarrow\,\, 0}. \label{highdim2}
\end{eqnarray}
Consequently,
\begin{eqnarray}
	P_{0}^{(n)} \rightarrow_{p} 0.
\end{eqnarray}
 To sum up
\begin{eqnarray}
H_{n}(\bm u)= \underbrace{T_{1}^{(n)}}_{\rightarrow_{d} \bm u^T N(\bm 0, I(\bm \beta^{*}))}+\underbrace{T_{2}^{(n)}}_{\rightarrow_{p} \frac{1}{2} \bm u^T \bm I_{F}(\bm \beta^*) \bm u}+\underbrace{P_{1}^{(n)}}_{\text{see } (\ref{a3.asy})}+\underbrace{T_{3}^{(n)}}_{\rightarrow_{p} 0}+\underbrace{P_{0}^{(n)}}_{\rightarrow_{p} 0}	
\end{eqnarray}
holds which yields (note that for $j \in A$ it holds that $||\bm \beta_{j}^{*}||_{2} \neq \bm 0$, thus $\bm \beta_{j}^*\neq \bm 0$, and for $j \notin A$ it holds that $||\bm \beta_{j}^{*}||_{2} = \bm 0$ , thus $\bm \beta_{j}^{*} = \bm 0$, by definition of the active set $A$)
\begin{eqnarray*}
H_{n}(\bm u) \rightarrow_{d}H(\bm u)= \begin{cases}
 \bm u_{A}^T I_{11} \bm u_{A}-2 \bm u_{A}^T \bm W \,\,\,\, \text{if}\,\,\, u_{j}=0 \, \forall j \notin A\\
 \infty \hspace*{3.2cm} \text{otherwise}	
 \end{cases}
\end{eqnarray*}
where $\bm W = N(\bm 0, \bm I_{F}(\bm \beta^*))$. The minimum of $H(\bm u)$ is clearly at $(\bm I_{11}^{-1}\bm W_{A}, \bm 0)^T$ (the first part $\bm I_{11}^{-1}\bm W_{A}$ is for the indices $j \in A$ and the second part $\bm 0$ for the indices $j \notin A$). There exists $\hat{\bm u}$ satisfying $\hat{\bm u}_{A} \rightarrow_{d} I_{11}^{-1} \bm W_{A}$ and $\hat{\bm u}_{A^c} \rightarrow_{d} 0$. Now, because $\bm W_{A}=N(\bm 0, \bm I_{11})$, the asymptotic normality follows.	
\end{proof}
\begin{rem}[Difference to  conclusion in \cite{Zou2006}]
	Since $H_{n}(\bm u)$ is not convex because of the $L_{0}$ part in our approach, we can not conclude as in \cite{Zou2006} that $\hat{\bm u}_{A} \rightarrow_{d} I_{11}^{-1} \bm W_{A}$ and $\hat{\bm u}_{A^c} \rightarrow_{d} 0$ because we can not ensure that $H_{n}(\bm u)$ has a unique minimum. But we can say that there exists $\hat{\bm u}$ satisfying $\hat{\bm u}_{A} \rightarrow_{d} I_{11}^{-1} \bm W_{A}$ and $\hat{\bm u}_{A^c} \rightarrow_{d} 0$. Then, the asymptotic normality follows due to $\bm W_{A}=N(\bm 0, \bm I_{11})$.
\end{rem}

\subsection{Proof of Theorem \ref{sel.cons}}
\begin{proof}
The beginning of the proof follows \cite{Bunea2008} (Proof of Lemma 3.1) but we will transfer the proof to the more general case of $\bm \beta_{j}$ being a vector instead of a real number since we analyze categorical data. Having that, we will use the proven $\sqrt{n}$-consistency of our estimator (Theorem \ref{root-n-consistency}) to show the inequality. Note that, as already mentioned, the estimate $\hat{\bm \beta}$ depends on the sample size $n$, so in particular we have $\hat{\bm \beta}^{(n)}$. For simplicity, we leave out the upper index $(n)$ but keep in mind the dependence of the estimate on the sample size. We have that
\begin{eqnarray}
\mathbb{P}(A^* \not \subseteq A_{n}) && \leq \mathbb{P}(j \notin A_{n} \,\,\text{for some }\,j \in A^*)\nonumber \\
&&	\leq \mathbb{P}(\bm{\hat \beta}_{j}= \bm 0 \,\, \text{and}\,\, \bm \beta^*_{j} \neq \bm 0  \,\,\text{for some }\,j \in A^*) \nonumber\\
&& \leq \mathbb{P}(||\hat{\bm \beta}_{j}-\bm \beta^*_{j}||_{2}=||\bm \beta_{j}^*||_{2} \,\, \text{for some}\,\, j \in A^*) \nonumber \\
&& \leq \mathbb{P}(||\hat{\bm \beta}_{j}-\bm \beta_{j}^*||_{2} \geq \min_{l \in A^*}||\bm \beta_{l}^*||_{2} \,\, \text{for some}\,\, j \in A^*)\nonumber \\
&& \leq \mathbb{P}(||\hat{\bm \beta}-\bm \beta^*||_{2}\geq \min_{l \in A^*}||\bm \beta^*_{l}||_{2}).\label{star}
\end{eqnarray}
Note that $\min_{l \in A^*}||\bm \beta^*_{l}||_{2}$ is a minimum over a bounded set, since we assumed that the true underlying structure is sparse, thus the minimum always exists.
 Now our goal is to bound $(\ref{star})$ by some $\varepsilon$. Since we know from Theorem \ref{root-n-consistency} that $||\hat{\bm \beta}- \bm \beta^*||_{2}=O_{\mathbb{P}}(1/\sqrt{n})$ we get that $\forall \varepsilon >0$ there exists constants $M, \tilde{N} >0$ such that
\begin{eqnarray}
\mathbb{P}(||\sqrt{n}(\hat{\bm \beta}-\bm \beta^*)||_{2} > M) < \varepsilon\,\,\,\,\,\, \forall\, n\,>\,\tilde{N}.	\label{sel.cons.eq1}
\end{eqnarray}
Hence, for $n>\tilde{N}$ we have $\displaystyle \mathbb{P}\left(||\hat{\bm \beta}-\bm \beta^*||_{2} > \frac{M}{\sqrt{n}}\right) < \varepsilon$. Now, with $\varepsilon >0$ and constants $M,\tilde{N} > 0$, we can always choose some $N' >0$ such that $\displaystyle \frac{M}{\sqrt{n}} \leq \min_{l \in A^*}||\bm \beta_{l}^*||_{2}$ for all $n \geq N'$. Note that by definition we have that $||\bm \beta_{l}^*||_{2} \neq 0 \, \, \forall\, l \in A^*$.  Now we can write expression $(\ref{star})$ as
\begin{eqnarray}
\mathbb{P}(||\hat{\bm \beta}-\bm \beta^*||_{2}\geq \min_{l \in A^*}||\bm \beta^*_{l}||_{2}) \leq \mathbb{P}\left(||\hat{\bm \beta}-\bm \beta^*||_{2} > \frac{M}{\sqrt{n}}\right) < \varepsilon \,\,\, \forall\, n > \max\{\tilde{N},N'\}.
\end{eqnarray}
Consequently, $\forall \varepsilon > 0$ we can find some $N:=\max\{\tilde{N},N'\}$ such that 
\begin{eqnarray}
P(A^* \not \subseteq A_{n}) < \varepsilon \,\,\, \forall\, n \,>\, N 
\end{eqnarray}
which completes the proof.
\end{proof}

\subsection{Proof of Theorem \ref{sel.cons.div}}
\begin{proof}
	The proof works analogously to the proof of Theorem \ref{sel.cons} until we end up with (\ref{sel.cons.eq1}) which is modified using Theorem \ref{consistency-highdim}
	\begin{eqnarray}
\mathbb{P}(||\alpha_{n}^{-1}(\hat{\bm \beta}-\bm \beta^*)||_{2}> M) < \varepsilon\,\,\,\,\,\, \forall\, n\,>\,\tilde{N}.	
\end{eqnarray}
with $\alpha_{n}=\sqrt{\frac{p_{n}}{n}}$. The rest works analogously to Theorem \ref{sel.cons}.
\end{proof}

\section{Details on Approximation used in BCD}\label{app.approx}
For the approximation of the objective function $\tilde{g}(\bm \beta_{j},\bm\beta^{(k)})$ used in the covariate-wise BCD approach, we provide next the details on the derivation of the function $g(\bm \beta_{j},\bm\beta^{(k)})$, which is part of $\tilde{g}(\bm \beta_{j},\bm\beta^{(k)})$. In general, we use the following quadratic approximation of the $L_0$ part at some $\bm \beta^{(k)}$ (see also  \cite{OelkerEtAl2014})
\begin{eqnarray}
P_{\lambda}^{L_{0}}(\bm \beta) \approx P^{L_{0}}_{\lambda}(\bm \beta^{(k)}) +\frac{1}{2}(\bm \beta^T \bm A_{\lambda} \bm \beta + \bm \beta^{(k),T} \bm A_{\lambda} \bm \beta^{(k)}),\label{approxi1}
\end{eqnarray}
where details on the construction of $\bm A_\lambda$ can be found in \cite{OelkerEtAl2014}. For the covariate-wise approach we obtain $\bm A_{\lambda,j}$, on which details can be found in Remark \ref{rem.a.lambda}.
We proceed as follows: since our penalty function shows a separable structure, we obtain the solution coodinate-wise. With the help of a Taylor approximation of the log-likelihood, we approximate the $L_{0}$ penalty function $P_{\lambda}^{L_{0}}(\bm \beta_{j})$ separately for each $j \in \{1,...,J\}$ such that it is possible to follow a coordinate-wise procedure for minimization. So, we will obtain an approximation as in (\ref{approxi1}) for  $P_{\lambda}^{L_{0}}(\bm \beta_{j})$ for each $j \in \{1,...,J\}$.\\ 
Now it remains to obtain an approximation of the log-likelihood. In particular, we approximate the log likelihood with Taylor as in \cite{BrehenyHuang2011} yielding an approximation of $L_{n}(\bm \beta)$ given by 
\begin{eqnarray}
L_{n}(\bm \beta) \approx \frac{1}{2n}(\tilde{\bm y}- \bm X \bm \beta)^T \widetilde{\bm W} (\tilde{ \bm y} - \bm X \bm \beta).\label{approx.likeli}
\end{eqnarray}
Here, $\widetilde{\bm W}$ is a diagonal matrix of weights, see below for details.
\begin{rem}[On the factor of $\frac{1}{2n}$ in the log likelihood]
	In the literature, the log likelihood (or the squared difference in the linear model case respectively) is sometimes devided by the factor $\frac{1}{2n}$, as for example in \cite{BrehenyHuang2011} and \cite{GuoEtAl2015}. Since it does not change the solution of the minimum of the log Likelihood it is a convenient choice because it stabilizes the algorithm and it ensures that the impact of the tuning parameter $\lambda$ does not depend on the sample size $n$. Note that one can also neglect this factor but in this case one has to be careful when comparing two solutions for different tuning parmameters and different sample sizes respectively, but basically it works the same way. We will use this factor in the sections about computation with block coordinate descent (BCD) and keep in mind that it is not used by \cite{OelkerEtAl2014} in PIRLS, even though there is an option in the package \texttt{gvcm.cat} that divides the log-likelihood by $n$.
\end{rem}
For the matrix with weights it holds 
\begin{eqnarray}
\widetilde{\bm W}= \text{diag}(w_{i}) \in \mathbb{R}^{n \times n},\,\,w_{i}=\pi_{i}(1-\pi_{i}) \,\,\, \text{for}\,\, i=1,...,n
\end{eqnarray}
where $\displaystyle \pi_{i}= \frac{\exp(\eta_{i})}{1+\exp(\eta_{i})}$ and $\eta_{i}=(\bm X \bm \beta^{(k)})_{i}$ thus $\pi_{i}$ is evaluated at the current iteration $k$. Thus, $\widetilde{\bm W}$ also depends on the iteration step and should be denoted by $\widetilde{\bm W}^{(k)}$ but for simplicity we omit the index $k$. 

\begin{rem}[On notation] Note that $\bm \mu = \bm \pi$ (in particular it holds that $\frac{\partial \mu}{\partial \eta}=\pi(1-\pi)=\frac{\exp(\eta)}{(1+\exp(\eta))^2}$). 
\end{rem}
Furthermore, the working response $ \tilde{\bm y}$ is given by 
\begin{eqnarray}
\tilde{\bm y}=\bm X \bm \beta^{(k)}+\widetilde{\bm W}^{-1}(\bm y- \bm \pi)		
\end{eqnarray}
where, $\bm \pi=(\pi_{1},...,\pi_{n})$ is evaluated at $\bm \beta^{(k)}$. For each $j \in \{1,....,J\}$ we get the following approximation of the penalty term in $\bm \beta_{j}^{(k)}$, analogously to (\ref{approxi1})
\begin{eqnarray}
	P_{\lambda}^{L_{0}}(\bm \beta_{j})\approx P_{\lambda}^{L_{0}}(\bm \beta_{j}^{(k)})+\frac{1}{2}(\bm \beta_{j}^T\bm A_{\lambda,j}\bm \beta_{j}+(\bm\beta_{j}^{(k)})^T \bm A_{\lambda,j}\bm \beta_{j}^{(k)}).\label{approx.pen}
\end{eqnarray}
Note that, in particular one has to write $\bm A_{\lambda,j}^{(k)}$ instead of $\bm A_{\lambda,j}$ since this quantity depends on the iteration step $k$, but we will leave the upper index out for simplicity.
\begin{rem}\label{rem.a.lambda}[Details on $ \bm A_{\lambda,j}$ for $L_{0}$]
For covariate $j \in \{1,...,J\}$ with $p_{j}$+1 levels (including the reference category), the components of the approximation look as follows where we assume observing a nominal covariate. For an ordinal one the value for $|L_{j}|$ will change as we just compare adjacent categories for ordinal factors. Let $L_{j}$ be the set containing the row numbers of the matrix $\bm A$ with rows $\bm a_{l}$ that correspond to the differences for covariate $j$. We have with $\bm a_{l,j}\,(l \in \{1,...,|L_{j}|\})$ being the columns of some matrix $\bm A_{j}$ (not to be mixed up with $\bm A_{\lambda,j}$) that produces the differences of the entries in $\bm \beta_{j}^{(k)}$ that
\begin{eqnarray}
|L_{j}| &&= p_{j}+ {p_{j}\choose 2}=\frac{p_{j}(p_{j}-1)}{2} \hspace*{2cm}\text{(number of differences of entries in $\bm \beta_{j}^{(k)}$)} \nonumber \\
\bm A_{\lambda,j} &&=\lambda_{0}\sum_{l =1}^{|L_{j}|} p_{l}'(||\bm a_{l,j}^T \hat{\bm \beta}^{(k)}_{j}||_{0})\frac{D_{l}(\bm a_{l,j}^T \hat{\bm \beta}^{(k)}_{j})}{\bm a_{l,j}^T \hat{\bm \beta}^{(k)}_{j}} \bm a_{l,j}\bm a_{l,j}^T \nonumber \\
&&= \lambda_{0}\sum_{l=1}^{|L_{j}|} \left(\frac{1}{1+\exp(-\gamma|\bm a_{l,j}^T \bm \hat{\bm \beta}^{(k)}_{j}|)}\right)\left(1-\frac{1}{1+\exp(-\gamma|\bm a_{l,j}^T \hat{\bm \beta}^{(k)}_{j}|)}\right)\nonumber  \\
&& \,\,\,\,\,\,\,\,\,\,\,\,\,\,\,\,\,\,\,\,\,\,\,\, \cdot \frac{2 \gamma \bm a_{l,j} \bm a_{l,j}^T}{\sqrt{(\bm a_{l,j}^T \hat{\bm\beta}^{(k)}_{j})^2+c}}\label{a.lambda}
\end{eqnarray}
We have that the columns $\bm a_{l,j} \in \mathbb{R}^{p_{j}\times 1}$ so they produce the differences of the coefficients and since they also include a columns in the shape of $(1,0,...,0),(0,1,0,....,0), ...,(0,...,0,1 )$, so just one entry equal to one and the others equal to zero, they also build the differences of each coefficient with reference category zero. It holds that $\bm A_{\lambda,j} \in \mathbb{R}^{p_{j}\times p_{j}}$ and $\bm A_{\lambda,j}$ is symmetric. Note that $\bm A_{\lambda,j}$ does not depend on $\bm \beta_{j}$, it just depends on $\hat{\bm \beta}^{(k)}_{j}$.
\end{rem}
So the following function $g(\bm \beta_{j},\bm\beta^{(k)})$ will be the covariate-wise approximation of the log likelihood and $L_0$ penalty part that we use in the BCD procedure
\begin{eqnarray}
	g(\bm \beta_{j}, \bm \beta^{(k)}):=&&\frac{1}{2n}(\tilde{\bm y}- \bm X \bm \beta)^T \widetilde{\bm W} (\tilde{ \bm y} - \bm X \bm \beta)+ P^{L_{0}}_{\lambda}(\bm \beta^{(k)}_{j}) \nonumber \\ && + \frac{1}{2} (\bm \beta^T_{j} \bm A_{\lambda,j} \bm \beta_{j} + \bm \beta^{(k),T}_{j}\bm A_{\lambda,j} \bm \beta^{(k)}_{j}). \label{defg}
\end{eqnarray}

\section{Details on Simulation Study}\label{app.sim}
\subsection{Details on Tuning}
We used CV for all penalties to determine the tuning parameter $\lambda_{0}$ or $\bm \lambda =(\lambda_{0} ,\lambda_{1})$ for the $L_{0}$-FGL approach. In particular, we used $k=5$ fold CV, where we used $\lambda_{lower}=0$ and for $\lambda_{upper}$ we chose a value which excludes all variables from the model. Note that for $L_{0}$
-FGL we need two maximum values for lambda, hence $\lambda_{max}=(\lambda_{max,1},\lambda_{max,0})$. So for the case of two tuning parameters, we chose them in a way such that for $\lambda_{max}=(\lambda_{max,1},\lambda_{max,0})$ all parameters are excluded from the model where we took $\lambda_{max,1}=\lambda_{max,0}$ to avoid that we set the focus on selection or fusion. Between these two values, the CV procedure fitted the model for $n_{\lambda}=10$ different values of $\lambda_{0}$. For the CV of $L_{0}$ with PIRLS, we used the stored functions in $\texttt{gvcm.cat}$. As explained at the beginning of this work, for the cross validation procedure for $L_{0}$-FGL, which includes two tuning parameters, we chose a two step procedure. In particular, we first set the tuning of the $L_{0}$ part to zero ($\lambda_0=0$) and determine the optimal value for the group lasso part ($\lambda_{1,\text{opt}}$). Then, fixing the tuning of the group lasso part to this optimal value ($\lambda_{1,\text{opt}}$), we determine the best value for the tuning parameter of the $L_{0}$ part, resulting in ($\lambda_{0,\text{opt}}$). Here, the "best" model is chosen wrt the predictive deviance measure. The parameters for the approximation of the $L_{0}$ part, which is used in all of our considered methods, where chosen equally in all approaches $c=10^{-5}$ and $\gamma=10$ (recommended in \cite{OelkerEtAl2014}). Even if $L_{0}$-FGL with BCD do not require a stepsize, we used a stepsize of $\nu=0.05$ for all considered approaches. This is done to stabilize the algorithm and to obtain comparable results.
 
\end{document}